\newtheorem{assumption}{Assumption}
\newtheorem{claim}{Claim}
\newtheorem{remark}{Remark}
\newtheorem{setting}{Setting}
\newcommand{\interior}[1]{{\kern0pt#1}}
\newcommand{\argmin}{\operatorname*{arg\,min}}
\newcommand{\exploss}{\mathcal{L}}
\newacronym{HMM}{HMM}{heterogeneous multiscale method}
\newacronym{FNO}{FNO}{Fourier neural operator}
\newacronym{BIE}{BIE}{boundary integral equation}
\newacronym{FNO-HMM}{FNO-HMM}{FNO-based HMM}
\newacronym{BIE-HMM}{BIM-HMM}{boundary integral method-based HMM}
\newacronym{BIM}{BIM}{boundary integral method}
\newacronym{BEM}{BEM}{boundary element method}
\title{Deep Micro Solvers for Rough-Wall Stokes Flow in a Heterogeneous Multiscale Method}
\author{Emanuel Str\"om \and Anna-Karin Tornberg \and Ozan \"Oktem}
\begin{document}
\maketitle


\begin{abstract}
We propose a learned precomputation for the heterogeneous multiscale method (HMM) for rough-wall Stokes flow. A Fourier neural operator is used to approximate local averages over microscopic subsets of the flow, which allows to compute an effective slip length of the fluid away from the roughness. The network is designed to map from the local wall geometry to the Riesz representors for the corresponding local flow averages. With such a parameterisation, the network only depends on the local wall geometry and as such can be trained independent of boundary conditions. We perform a detailed theoretical analysis of the statistical error propagation, and prove that under suitable regularity and scaling assumptions, a bounded training loss leads to a bounded error in the resulting macroscopic flow. We then demonstrate on a family of test problems that the learned precomputation performs stably with respect to the scale of the roughness. The accuracy in the HMM solution for the macroscopic flow is comparable to when the local (micro) problems are solved using a classical approach, while the computational cost of solving the micro problems is significantly reduced. 
\end{abstract}

\section{Introduction}
When devising simulation studies of macroscopic properties of fluid flows over a rough terrain, the large scale separation between terrain structure and the size of the simulation region causes problems for classical solvers. Grids that fully resolve the roughness are typically too costly to use, whereas coarse grids fail to capture the effects of the terrain (see \cref{fig: coarse grid}). A classical approach to remedy the problem of mixed scales is to model them separately; one first approximates the effects of the rough topology on a localized region close to the wall, and translates the effects to a simplified computational region with slightly altered boundary data, called a  \emph{wall law}~\cite{engquist_E_hmm}.

\emph{Asymptotic homogenization} methods have been used to derive wall-laws for viscous flows over a surface with periodic roughness~\cite{jager_poiseuille,Jger2003_couette,bodart_stokes_homo}, as well as random, ergodic roughness~\cite{basson2006wall,dalibard_2011_ergodic}. For these flows one commonly uses a \emph{Navier-slip} condition, a Robin-type boundary condition that relates the \emph{wall shear stress} to the fluid velocity at the wall by a proportionality constant (the \emph{slip amount}). The slip amount is determined from the solution to a problem on a reference domain that includes the boundary, also called a \emph{unit cell problem}.
    
Carney and Engquist applied the \gls{HMM} to \emph{laminar viscous} flow over a non-periodic, non-ergodic rough wall in~\cite{carney2021heterogeneous}. \gls{HMM} is a general framework for multiscale modelling which assumes that the governing equations are known at both the macroscopic and microscopic scales, up to a set of simulation parameters. The key building blocks of \gls{HMM} are a \emph{compression operator} that maps micro scale data to key parameters that govern the macro scale behavior (e.g. slip amount), and a \emph{reconstruction operator} that extracts relevant macroscopic data which modulate the micro scale (e.g. flow rates at the boundary)~\cite{engquist_E_hmm}. 
    
In the paper by Carney and Enquist, the compression operator extracts pointwise slip amount via not only one (as in asymptotic homogenization), but several microscopic simulations of the linearized flow along the rough wall. Contrary to homogenization, the boundary conditions of the microscopic flow are set by the macroscopic flow. This results in a coupled system that involves both the macroscopic and microscopic variables. 
    
In cases when the microscopic solution, compression and reconstruction operators are all \emph{linear}, one can efficiently precompute the composition of these three operators (or a subset of them) using the \emph{representation theorem}. One important note is that solving multiple microscopic problems can often be more expensive than solving the macroscopic system. Furthermore, wall-laws incur a modelling error which cannot be overcome by \gls{HMM}, even with exact micro solvers. Hence, there is a high tolerance for error in the micro solvers for \gls{HMM} compared to fully resolving methods, and a simultaneous need for reduced computational cost. 
    
One promising idea, which we explore further in this paper, is to \emph{learn} a microscopic solution procedure in \gls{HMM} by training against simulation data~\cite{eenquist2007_hmm_review}. The key underlying observation motivating this approach is that deep learning based approximations are, once trained, often faster to evaluate than traditional numerical approximations, and that a decrease accuracy compared to classical numerical schemes can be acceptable in the case of \gls{HMM}. As the geometry of the micro domains vary, it is essential that the learned approximation generalizes over the types of geometries that one expect to encounter.

Most work on deep learning for multiscale problems focuses on \emph{sequential methods} where information propagates between the scales in only one direction. An example is the work on \emph{deep potentials} by E et al that leverages deep neural networks to find potentials for atomistic \emph{particle interactions}, trained on \emph{quantum dynamics}~\cite{e2022_deep_potentials}. Other examples are prediction of \emph{stress fields}~\cite{gupta2022_stress_field} and \emph{crystalline structure}~\cite{pokharel2021_chrystal_plasticity}, and homogenized quantities~\cite{liu2019_topology_learning} in microstructures in materials science. See the review  by Baek et al for a comprehensive overview of ML methods for multiscale problems~\cite{multiscale_ml_2023}. 
    
In contrast to sequential methods, \emph{concurrent methods} employ a feedback loop in which information flows both from small to large scales and from large to small scales. There is less work on ML for concurrent methods. An example is~\cite{liu2019_topology_learning}, which uses the \emph{PCA-Net} architecture from~\cite{stuart2021hilbertpca} to learn solutions to a local \emph{unit-cell problem} in the micro scale variables. The averages feed into a dynamical macro scale \emph{deformation} model, which in turn updates the boundary condition of the unit-cell. 
    
\subsection{Outline and Contributions}
We consider the setting of rough-wall laminar flow, and propose a learned precomputation for the composition of micro solution and compression operators in the \gls{HMM} approach from~\cite{carney2021heterogeneous}. Our approach uses a \gls{FNO}~\cite{li2022_geofno}, whose input is a parameterization of the wall geometry, and whose output is the \emph{Riesz representor} for the composition of the microscopic solution operator with the compression operator. Leveraging Riesz representation theorem in this way is beneficial for any \gls{HMM} approach, but especially so when the Riesz representor is learned from data. We effectively cut the input dimension in half and subsequently improve sample complexity, since there is no need to incorporate boundary values as a part of the training data. The network architecture makes use of a formulation based on a \gls{BIE}, which enables a significant reduction in dimensionality, and generalizes over a range of geometries. 
    
The paper is structured as follows. \Cref{sec: background} presents a mathematical formulation of the rough-wall flow problem along with a review of the \gls{HMM} method from~\cite{carney2021heterogeneous}. There we also formulate a precomputation method based on adjoint calculus. We describe the network architecture and related learning problem in \cref{sec: method}. Then, in~\cref{sec: error analysis} we show that under suitable regularity assumptions, a bounded training loss leads to a bounded error in the homogenized solution. Lastly, in \cref{sec: experiments} we present numerical results that support the theoretical findings. We conclude in \cref{sec: conclusion} with a discussion of future work.

\clearpage
\section{A Multiscale Flow Problem}\label{sec: background}
We consider rough-wall laminar flow in two dimensions based on the homogeneous Stokes equations, which are obtained by linearizing  Navier Stokes equations. Our method relies on \emph{homogeneity} for dimensionality reduction and \emph{linearity} for efficient precomputation. Although the Stokes equations are generally a limiting model, they are good approximations of near-wall flow if surface roughness is small compared to the boundary layer~\cite{achdou1998_rough_bc}. In such cases one may use our method for the so-called micro problems presented below, even if the macroscopic flow is non-linear and non-homogeneous. We first state the general setting for which our method can be applied, and then present problem-specific details.

\subsection{Acceleration of HMM through the Representation Theorem}\label{sec: hmm}
Enquist, E and Huang first proposed \gls{HMM}  in~\cite{engquist_E_hmm} as a general-purpose framework for computing solutions to multiscale problems with scale separation. The method is \emph{concurrent} in the sense that it couples the \emph{microscopic} and a \emph{macroscopic} models through \emph{compression} and \emph{reconstruction} operators. 

At an abstract level, \gls{HMM} can be formally written as the solution to the coupled continuum problems
\begin{equation}
 \begin{cases}
     \macm(\macq, \macd)=0, & \macd=\comp\bigl((\micq_n, \micdom_n)_{n=1}^N\bigr) \\
     \micm(\micq_n, \micd_n)=0, & \micd_n=\rec(\macq, \micdom_n),\quad n=1,\dots,N
 \end{cases}
 \label{eq: hmm_problem}
\end{equation}
where $\macq\colon\dom\to\Reals$ and $\micq_n\colon\micdom_n\to\Reals,\; n=1,\dots, N$ are the \emph{macro} and \emph{micro quantities of interest}, $\macm$ and $\micm$ are the macro and micro \emph{models}, $\dom$ and $\micdom_n$ are the \emph{macro} and \emph{micro domains} on which the quantities are defined, and $\macd$ and $\micd_n$ are the macro and micro \emph{data} that are obtained from the quantities of interest through a \emph{compression} operator $\comp$ and a \emph{reconstruction} operator $\rec$. We refer to the fixed-data subproblems
\begin{align}
 \macm(\macq, \macd)&=0 \label{eq: macro_problem}\\
 \micm(\micq_n, \micd_n)&=0,\; n=1,\dots,N \label{eq: micro_problem}
\end{align}
as the \emph{macro} and \emph{micro problem(s)} respectively, and~\eqref{eq: hmm_problem} as the \emph{\gls{HMM} problem}. The \gls{HMM} problem can be solved as a \emph{monolithic} system, but in the case when the macro and micro problems can be solved efficiently, a more computationally feasible approach is to iterate between solving the macro and micro scales until convergence. 
    
In practice the geometry differs for each micro problem, and one must solve multiple problems at each iteration. Depending on the complexity of the micro problems and the size of $N$, such an iterative method can become prohibitively expensive. In the following, we make an assumption about the compression operator that allows us to improve the efficiency of \gls{HMM} considerably. This assumption holds in the specific case of rough-wall Stokes flow, but also extends to other multiscale problems.

\begin{assumption}
\label{asm: micro_problem}
The microscopic quantities $\micq_n$ is an element in a Banach space $\mathcal{X}$, and the microscopic data $\micd_n$ in a Hilbert space $\micdspace$. The solution to the micro problem \eqref{eq: micro_problem} is \emph{bounded} and \emph{linear} with respect to the data $\micd$, and $\comp$ factors into
\[
 \comp\bigl((\micu_n,Y_n)_{n=1}^N\bigr) = f\bigl(g(\micu_1,Y_1), \dots, g(\micu_N,\micdom_N)\bigr),
\]
where $g(\cdot,Y)\colon \mathcal{X}\to \Rn{M}$ is linear and bounded for all domains $Y$, and $f$ maps from $\Rn{NM}$ to the macro data $D$.
\end{assumption}

\begin{center}
\begin{minipage}{0.5\textwidth}
\begin{algorithm}[H]
\centering
\caption{HMM}
\label{alg: HMM}
\begin{algorithmic}[1]
\STATE{\textbf{Input:} \\ 
Reconstruction oper. $\rec$,\\
compression operator $\comp$,\\
macro problem \eqref{eq: macro_problem},\\
micro problem(s) \eqref{eq: micro_problem},\\
initial guess $\macd_0$.}
\STATE{\textbf{Output:} Macroscopic data $\macq$.}
\STATE{$k\gets 0$} 
\STATE{$\macq_0\gets $ solve \eqref{eq: macro_problem} with $\macd =\macd_0$}
\STATE{}
\WHILE{not converged}
  \FOR{$n\gets 1,\dots,N$}
  \STATE{$\micd_{k,n} \gets \rec(\macq_k,Y_n)$} 
  \STATE{$\micq_{k,n} \gets $ solve \eqref{eq: micro_problem} w. data $\micd_{k,n}$}
  \ENDFOR
  \STATE{$\macd_{k+1}\gets \comp((\micq_{k,n}, \micdom_{k,n})_{n=1}^N)$}
  \STATE{$\macq_{k+1}\gets$ solve~\eqref{eq: macro_problem} w. data $\macd_{k+1}$}
  \STATE{$k\gets k+1$}
\ENDWHILE
\STATE{$\macq\gets \macq_k$}
\end{algorithmic}
\end{algorithm}
\end{minipage}%
\begin{minipage}{0.5\textwidth}
\begin{algorithm}[H] 
\centering
\caption{Precomputed HMM}\label{alg: adjoint HMM}
\begin{algorithmic}
\STATE{\textbf{Input}: \\
Reconstruction operator $\rec$, \\
Factorized compression $\comp = f\circ g$,\\
macro problem \eqref{eq: macro_problem},\\
adjoint micro problem(s) \eqref{eq: adjoint_problem},\\
initial guess $\macd_0$.}
\STATE{\textbf{Output:} Macroscopic data $\macq $.}
\STATE{$k\gets 0$}
\STATE{$\macq_0\gets $ solve \eqref{eq: macro_problem} with $\macd =\macd_0$}
\STATE{$\adjmicu_m(Y_n)\gets$ solve~\eqref{eq: adjoint_problem}, for all $m,n$.}
\WHILE{not converged}
 \FOR{$n\gets 1,\dots,N$}
     \STATE{$\micd_{k,n} \gets \rec(\macq_{k},\micdom_n)$}
     \STATE{}
 \ENDFOR
 \STATE{$D_{k+1}\gets f((\inner{d_{k,n}}{\adjmicu_m(Y_n)})_{m,n=1}^{M,N})$}
 \STATE{$\macq_{k+1}\gets$ solve~\eqref{eq: macro_problem}, data $D_{k+1}$}
 \STATE{$k\gets k+1$}
\ENDWHILE
\STATE{$\macq\gets \macq_k$}
\end{algorithmic}
\end{algorithm}
\end{minipage}
\end{center}

\begin{lemma}\label{lemma: dual_adjoint}
Let \cref{asm: micro_problem} hold, where $(d,\micdom)\mapsto\micq(\micd, \micdom)$ solves the micro problem~\eqref{eq: micro_problem}. Then, $\micd\mapsto g(\micq(d, \micdom), \micdom)$ is linear and bounded and there exist elements $r_1(Y), \dots r_M(Y) \in \mathcal{D}$ such that, for each $m=1,\dots,M$,
\begin{equation}
     \inner{r_m(\micdom)}{\micd} = g_m\bigl(\micq(\micd, \micdom), \micdom\bigr),\quad \text{for all $d\in\mathcal{D}$,}
     \label{eq: adjoint_problem}
\end{equation}
where $\inner{\cdot}{\cdot}$ denotes the inner product  on $\micdspace$ and $g_m$ is the $m$:th element of $g$.
\end{lemma}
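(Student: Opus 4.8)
The plan is to read the claim as a coordinate-wise application of the Riesz representation theorem to the composition of the two bounded linear maps furnished by \cref{asm: micro_problem}. Fix a micro domain $\micdom$. By assumption the solution operator $\micd\mapsto\micq(\micd,\micdom)$ is a bounded linear map from $\micdspace$ into the Banach space $\mathcal{X}$, and $g(\cdot,\micdom)\colon\mathcal{X}\to\Rn{M}$ is bounded and linear; hence their composition $\Phi_{\micdom}\colon\micd\mapsto g\bigl(\micq(\micd,\micdom),\micdom\bigr)$ is a bounded linear map from $\micdspace$ to $\Rn{M}$, with operator norm at most the product of the two constituent norms. This already establishes the first assertion of the lemma.

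Next I would project onto components. For each $m=1,\dots,M$, composing $\Phi_{\micdom}$ with the (bounded, linear) $m$-th coordinate functional on $\Rn{M}$ shows that $\micd\mapsto g_m\bigl(\micq(\micd,\micdom),\micdom\bigr)$ is a bounded linear functional on the Hilbert space $\micdspace$. The Riesz representation theorem then produces a unique $r_m(\micdom)\in\micdspace$ such that $\inner{r_m(\micdom)}{\micd}=g_m\bigl(\micq(\micd,\micdom),\micdom\bigr)$ for every $\micd\in\micdspace$, which is precisely \eqref{eq: adjoint_problem}. Collecting these representors for $m=1,\dots,M$ yields the asserted family $r_1(\micdom),\dots,r_M(\micdom)$.

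The argument is essentially bookkeeping once \cref{asm: micro_problem} is available, so I do not anticipate a genuine obstacle. The only point that warrants care is that the boundedness of $g(\cdot,\micdom)$ — and, implicitly, of the micro solution operator — is assumed to hold for each admissible $\micdom$, so that the representors $r_m(\micdom)$ exist for every micro domain encountered in \gls{HMM}; the bounds, and hence $\|r_m(\micdom)\|$, may of course depend on $\micdom$. Tracking this dependence is precisely what makes the scaling hypotheses in the later error analysis necessary.
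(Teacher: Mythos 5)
Your proposal is correct and follows the same route as the paper: compose the bounded linear micro solution operator with $g(\cdot,\micdom)$, note the composition is bounded and linear, and apply the Riesz representation theorem componentwise to obtain the $r_m(\micdom)$. The paper's proof is just a two-sentence version of this argument, so your more detailed write-up adds nothing structurally different.
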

\begin{proof}
The compositions of bounded linear operators is itself a bounded linear operator. The claim now follows directly from the Riesz representation theorem.
\end{proof}
\begin{remark}
In the following, we will denote the $r_m(\micdom)$, $m=1,\ldots,M$ the {\em Riesz representors} of the problem. Note that they depend only on the domain $\micdom$ and not on the boundary conditions. 
\end{remark}
By \cref{lemma: dual_adjoint}, we can replace the $N$ micro problems \eqref{eq: micro_problem} with $MN$ solutions  of the \emph{adjoint problems} \eqref{eq: adjoint_problem}, $M$ problems for each micro domain $Y=Y_n$. After that, the compression operator can be evaluated using inner products:
\[
 \comp\bigl((\micq_n,\micdom_n)_{n=1}^N\bigr) = f\Bigl(\bigl(\inner{r_m(Y_n)}{d_n}\bigr)_{m=1,n=1}^{M,N}\Bigr).
\]
    
Solving the adjoint problems is typically as expensive as solving the ordinary micro problems~\eqref{eq: micro_problem}. On the other hand, when the domain $\micdom_n$ remains unchanged (up to a rigid body transformation), the adjoint problems only need to be solved once, and the Riesz representors can be stored and reused for subsequent micro problems, even if the data changes. Hence, if $M$ is sufficiently small ($M=2$ in our case as we will see in the next subsection), the adjoint formulation is computationally cheaper. \Cref{alg: HMM,alg: adjoint HMM} illustrate the differences between \gls{HMM} with and without precomputations. Note that inside the loop, the computational cost of \cref{alg: HMM} is dominated by repeatedly solving the micro problems, whereas \cref{alg: adjoint HMM} only uses inner products.
    
The main idea behind the \emph{learned precomputation} method we propose is to use deep learning to predict the Riesz representors given $\micdom$, i.e. to \emph{learn the mapping $\micdom\mapsto r_m(\micdom)$}. 
The dependence on the geometry $\micdom$ of the micro domain is in general non-linear and nontrivial, which is why a learned approach is suited for this task.


\subsection{Stokes Flow over a Rough Wall}\label{sec: problem_formulation}\label{sec: main problem}
Consider a bounded two-dimensional open, connected set (\emph{domain}) $\dom\subset \Rn{2}$ with boundary $\dombdry$. Let $\uvec \in \contspace_2(\dom, \Rn{2})$ be the velocity field and $p\in\contspace_1(\dom,\Reals)$ the pressure field. The \emph{Stokes equations} are given by the following system of \emph{partial differential equations} (PDEs) and \emph{boundary conditions} (BCs):
\begin{align}
 \label{eq: stokes_mom}
 -\Lap \uvec + \grad p &= \force \quad \text{in } \dom, \\
 \label{eq: stokes_div}
 \dive\uvec &= 0 \quad \text{in } \dom, \\
 \label{eq: stokes_bc}
 \uvec &= \bdryvel \quad \text{on } \dombdry,
\end{align}
where $\force\colon \dom \to \Rn{2}$ is a given \emph{external force} and $\bdryvel\colon \dombdry \to \Rn{2}$ is a given \emph{boundary velocity}. In general~\eqref{eq: stokes_mom} includes the \emph{viscosity}, but we set it to 1, which is always possible by reduction to dimension free form. We refer to~\eqref{eq: stokes_mom} as the \emph{momentum equation},~\eqref{eq: stokes_div} as the \emph{divergence constraint} and~\eqref{eq: stokes_bc} as the \emph{Dirichlet  boundary condition}. Regularity of $\force, \bdryvel$ and $\Omega$ determines regularity of the solutions $\uvec, p$. We always assume that the boundary $\dombdry$ is \emph{Lipschitz} continuous. The momentum equation~\eqref{eq: stokes_mom} and divergence constraint~\eqref{eq: stokes_div} will appear regularly in the following sections, and we adopt a shorter notation:
    \begin{equation}
    \diffoperator(\uvec, p) = 0,  
    \label{eq:up_operator}  
    \end{equation}
where $\diffoperator$ is an operator that maps $\uvec, p$ onto the pair $(-\Delta \uvec + \nabla p - \force, \nabla\cdot \uvec)$. 

The central object of study in this work is the \emph{multiscale Stokes problem}, which is to find a solution $(\uvec_\epsilon, p_\epsilon)$ to the Stokes equations~\eqref{eq: stokes_mom}-\eqref{eq: stokes_bc} in a \emph{multiscale domain} $\dom_\epsilon$ with boundary conditions $\bdryvel_\epsilon$ and external force $0$. We write the full problem for completeness:
    \begin{equation}
 \label{eq: stokes_mom_eps}
 \begin{cases} 
     \diffoperator(\uvec_\epsilon, p_\epsilon) = 0 &\text{in  $\dom_\epsilon$,}\\
     \uvec_\epsilon = \bdryvel_\epsilon &\text{on $\dombdry_\epsilon$.}
 \end{cases}
    \end{equation}
The multiscale domain $\dom_\epsilon$ has a smooth boundary apart from the \emph{rough wall} $\roughness_\epsilon$, a subset that has rapid oscillations with an amplitude and wavelength at some small scale $\epsilon$ -- see \cref{fig: rough_wall} for an illustration. We will later introduce a more rigorous description of $\dom_\epsilon$. We assume the velocity field satisfies a \emph{no-slip condition} at the steady rough wall:  \begin{equation}\label{eq:NoSlip} \bdryvel_\epsilon(\point) = 
 \begin{cases}
     \bdryvel(\point) & \text{if $\point \in \dombdry_\epsilon \setminus \roughness_\epsilon$,}  \\
     0 & \text{if $\point \in \roughness_\epsilon$.}
 \end{cases}
    \end{equation} 
The problem~\eqref{eq: stokes_mom_eps} is typically difficult to solve for small $\epsilon$ since it requires fine discretization near the rough wall. A naive solution is to approximate~\eqref{eq: stokes_mom_eps} by setting $\epsilon=0$, which entirely does away with any dependence on $\epsilon$. However, the surface roughness can contribute to significant changes in surface drag which affects the flow. \Cref{fig: coarse grid} shows how this naive approach results in poor accuracy. We will now formulate a \emph{concurrent} method based on \gls{HMM}, that achieves higher accuracy. The following sections closely follow~\cite{carney2021heterogeneous}.

\begin{figure}[!ht]
    \centering
    \includegraphics[width=\textwidth]{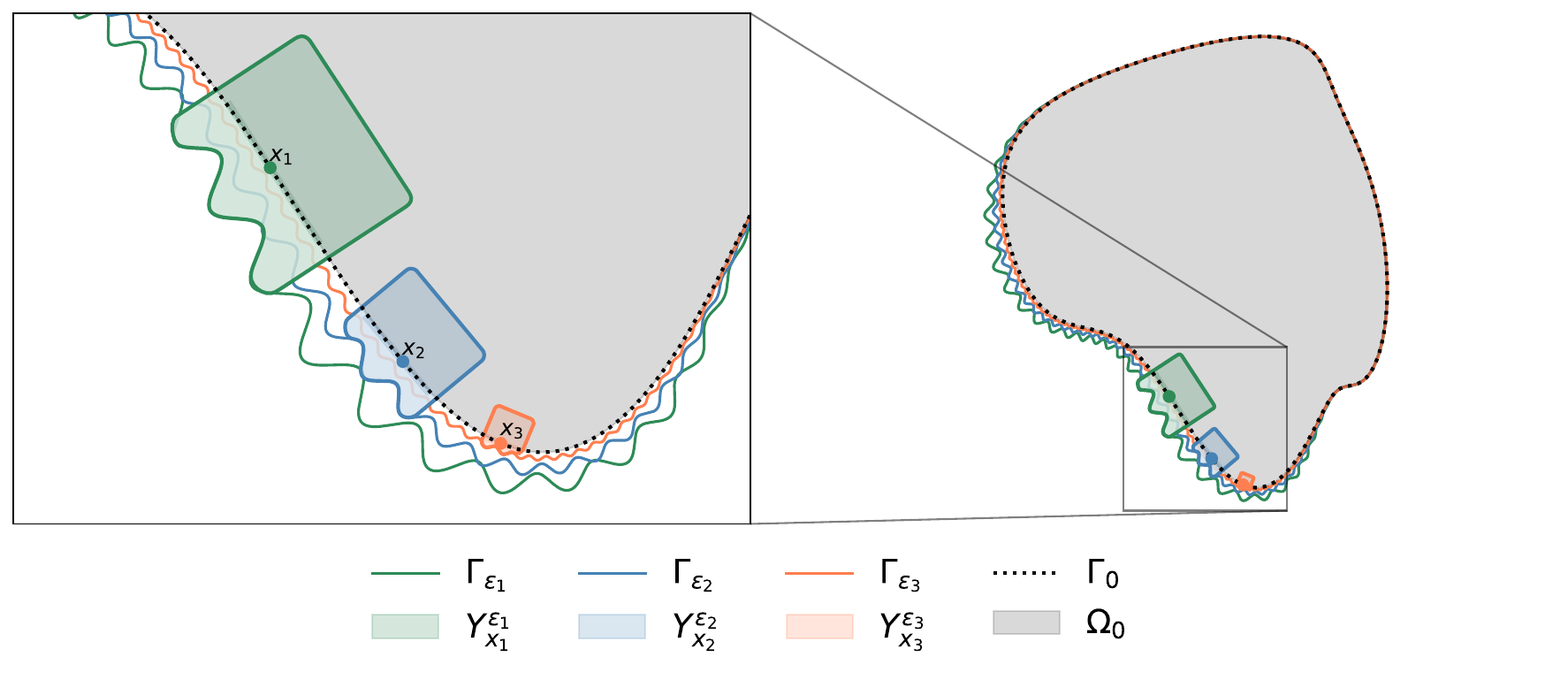}
    \caption{Example illustrating Multi-Scale domains $\dom_\epsilon$ with rough walls $\roughness_\epsilon$ for different values of $\epsilon$ (here $0.03, 0.02$, and $0.001$). $\dom_0$ is the Macro domain with (non-oscillatory) boundary $\roughness_0$ and $\micdom_\point^\epsilon$ is the Micro domain associated to a point $\point \in \roughness_0$.} 
    \label{fig: rough_wall}
\end{figure}

\begin{figure}[!ht]
\includegraphics[width=\textwidth]{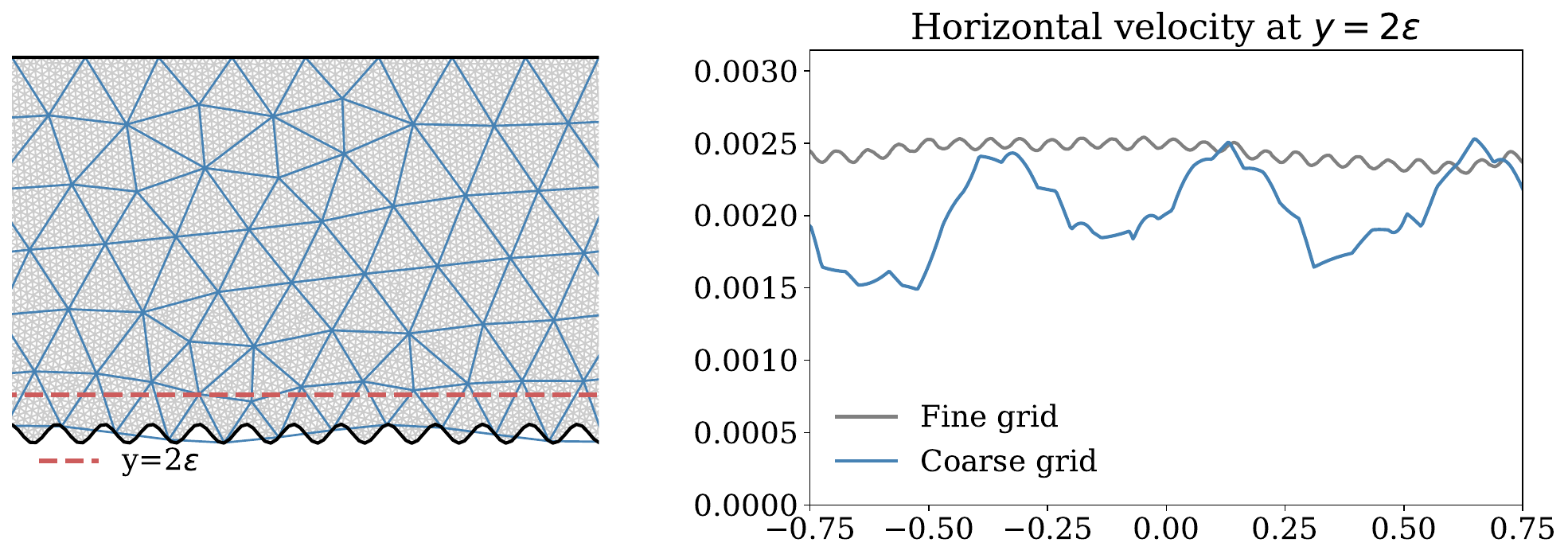}
\caption{Example of how a classical numerical method can fail for rough-wall flow. The left figure shows an undersampled coarse grid in blue overlayed on a fine grid, that fully resolves the roughness. The right figure shows the two FEM solutions on the respective grids, at a distance $2\epsilon$ from the boundary.}
\label{fig: coarse grid}
\end{figure}
    
\subsection{HMM for Rough Wall Flow}\label{sec: hmm_rough_wall}
Enquist and Carney formulate \gls{HMM} for a non-linear stationary Navier-Stokes problem with a rough wall in~\cite{carney2021heterogeneous}. We adopt the same constitutive model for linearized Stokes flow~\eqref{eq: stokes_mom_eps}. Recall (\cref{sec: hmm}) that the \gls{HMM} requires a macro problem~\eqref{eq: macro_problem} with unknown quantities $\macq$ and known data $\macd$, and a micro problem~\eqref{eq: micro_problem} with unknown quantities $\micq$ and known data $\micd$, as well as a compression operator $\comp$ that maps $\micq$ to $\macd$, and a reconstruction operator $\rec$ that maps $\macq$ to $\micd$. We present these building blocks as derived by Engquist and Carney. For the remainder of this section we fix an $\epsilon > 0$.
    
\subsubsection{Macro Problem} 
The macroscopic quantity $\macu$ of interest is the macroscopic velocity field. The pressure $\macp$ is treated as an auxilliary variable. The macro problem is stated in~\eqref{eq: macro_problem stokes}. This is a model for Stokes flow restricted to the smooth domain $\dom_0$; effectively cutting away the rough wall $\roughness_\epsilon$ and replacing it with the smooth wall $\roughness_0$. One compensates for the smoothing by imposing a \emph{Robin condition} with a slip amount $\alpha$ at $\roughness_0$ instead of a non-slip condition \eqref{eq:NoSlip}:
\begin{equation}\label{eq: macro_problem stokes}
    \begin{cases}
    \diffoperator(\macu, \macp) = 0_{\;\;} 
 & \text{in $\dom_0$,} \\
 \macu + (\alpha \tangent \cdot \partial_\normal \macu)\tangent = 0_{\;\;} & \text{on  $\roughness_0$,} \\
 \macu = \bdryvel & \text{on $\dombdry_0\setminus\roughness_0$.}
    \end{cases}
\end{equation}
Here the operator $\diffoperator$ was introduced in \eqref{eq:up_operator}, $\normal\colon\dombdry_0\to \torus$ is the outward-pointing \emph{unit normal vector} to $\dombdry_0$, $\tangent\colon\dombdry_0\to \torus$ the counter-clockwise \emph{unit tangent vector}, $\partial_\normal = \normal\cdot \grad$ is the derivative in the normal direction, and $\alpha\colon \roughness_0\to [0,\infty)$ is a positive scalar function (\emph{slip amount}). 
    
In order for the solution to~\eqref{eq: macro_problem stokes} to match that of~\eqref{eq: stokes_mom_eps}, one must find a suitable candidate for $\alpha$. Homogenization techniques can approximate $\alpha$ in specific settings. For example, when the roughness is a periodic fluctuation in the normal direction of a flat line segment, Enquist and Carney show in~\cite{carney2021heterogeneous} that $\alpha$ can be computed from a \emph{unit-cell problem} of Stokes type, posed at the boundary. Basson et al derive a similar method for a \emph{random, homogeneous roughness} in~\cite{basson2006wall}. In \gls{HMM}, we view $\alpha$ as the unknown data, which will be determined from the micro scale problem -- similar to the unit cell problem in homogenization theory.
      
\subsubsection{Micro Problem}\label{sec: micro problem} 
The micro problem consists of a set of $N$ Stokes problems on the \emph{micro domains} $\{\micdom_{n,\epsilon}\}_{n=1}^N$ centered at points $\{\point_n\}_{n=1}^N\subset \roughness_0$ along the smooth wall $\roughness_0$ and that overlap with the rough boundary $\roughness_\epsilon$, see ~\cref{fig: rough_wall}.
Each domain $\micdom_{n,\epsilon}$ defines a solution $\chi_{n,\epsilon}\colon \micdom_{n,\epsilon} \to \Rn{2}$ to a Stokes problem with pure Dirichlet boundary conditions on the \emph{micro boundary} $\partial\micdom$:
\begin{equation}
    \begin{cases}
    \diffoperator(\micu_{n,\epsilon}, \micp_{n,\epsilon}) = 0_{\;\;}  
      &\text{in $\micdom_{n,\epsilon}$} \\
    \micu_{n,\epsilon} = \micbdryvel_{n,\epsilon} 
      & \text{on $\partial\micdom_{n,\epsilon}$}
    \end{cases}
    \quad\text{for $n=1,\ldots, N$.}
\label{eq: micro_problem stokes}
\end{equation}
Here, $\micbdryvel_{n,\epsilon}$ is the microscopic data that agrees with the macro solution $\macu$ in the interior of $\dom_0$ and vanishes at the rough boundary. Once again, we treat the pressure $\micp_{n,\epsilon}\colon \micdom_{n,\epsilon}\to\Reals$ as an auxiliary variable. 

To improve upon readability, we will when referring to solutions of (or quantities related to) \eqref{eq: micro_problem stokes} often omit the dependency on indices $n$ (or point $\point_{n}$) and the scale $\epsilon$.

\subsubsection{Reconstruction Operator}\label{sec: reconstruction operator} 
In our setup, the reconstruction operator is supposed to map the macro flow $\macu$, defined on $\dom_0$, to boundary conditions for the micro domains. The resulting boundary value problems on the micro domains should be well-posed and well approximate the fully resolved solution. The macro domain $\dom_0$ does not contain the whole of $\partial\micdom$ in general (see ~\cref{fig: rough_wall}). Instead, we evaluate $\macu$ on the intersection $\dom_0\cap \partial\micdom$ and \emph{extrapolate} to the remaining part of the micro boundary. We now state requirements that guarantee well-posedness, and to some extent, closeness to the true solution.

\begin{setting}\label{setting: extrapolation}
The extrapolation is given by  $R\colon\contspace(\micdombdry\cap\dom_0,\Rn{2})\to\contspace(\micdombdry,\Rn{2})$ (reconstruction operator) that satisfies the following properties:
\begin{enumerate}[(i)]
\item $\micbdryvel=R[\uvec]$ is continuous on $\partial\micdom$.\label{item_cont}
\item The net flow through $\partial\micdom$ is zero: $\inner{\micbdryvel}{\normal}_{\micdombdry}=0$.\label{item_net_flow}
\item $h$ is consistent with the macro solution: $\micbdryvel=\macu$ on $\dom_0\cap \partial\micdom$.\label{item: macro_agreement}
\item $h$ satisfies the no-slip condition: $\micbdryvel=0$ on $\roughness_\epsilon\cap \partial\micdom$.\label{item: no_slip}
\end{enumerate}
\end{setting}
\cref{item_net_flow,item_cont} ensure well-posedness of the Stokes problem, and \cref{item: macro_agreement,item: no_slip} ensure that the micro solution $\micu$ is a good approximation of the true flow. Any operator $\rec$ such that $\micbdryvel =  \rec[\macu]$ satisfies~\cref{item_cont,item_net_flow,item: macro_agreement,item: no_slip} is considered a valid reconstruction operator.

\subsubsection{Compression Operator}\label{sec: compression operator} 
In our setup, the compression operator should map micro solutions $\{\micu_{\point_n}^\epsilon\}_{n=1}^N$ in the micro domains $\{\micdom_{\point_n}\}_{n=1}^N$ that are given by \eqref{eq: micro_problem stokes} to the effective slip amount $\interslip\colon\roughness_0\to \Reals$, a function defined on the smooth boundary. It should smooth out the $\epsilon$-scale but preserve the macroscopic flow properties, and result in a well-posed solution when $\interslip$ is plugged into~\eqref{eq: macro_problem stokes}. We do this in two steps. First, we will define a smoothing operator that maps the local solution from \eqref{eq: micro_problem stokes}, $\micu_{\point_n}^\epsilon$, to a local slip amount $\pointslip_n \in \Reals$, and then we define a continuous function on the smooth boundary $\roughness_0$ by \emph{interpolating} in-between the pairs $\bigl\{ (\point_n, \pointslip_n)\bigr\}_{n=1}^N \subset \roughness_0 \times \Reals$.
\par\smallskip
Consider an arbitrary micro solution $\micu$ to~\eqref{eq: micro_problem stokes} on $\micdom$ and let $\lineseg \subset \dom_0\cap \micdom$ be a straight line segment. We define the \emph{effective slip amount} (same as in~\cite{carney2021heterogeneous}) as
\begin{equation}
\pointslip := -\frac{\inner{\micu}{\tangent}_{\lineseg}}{\inner{\partial_\normal\micu}{\tangent}_{\lineseg}} = -\dfrac{\displaystyle{\int_{\lineseg}}\micu(\ypoint)\cdot \tangent(\ypoint)\,\mathrm{d}S(\ypoint)}{\displaystyle{\int_{\lineseg}} \partial_{\normal(\ypoint)}(\micu(\ypoint)\cdot \tangent(\ypoint))\,\mathrm{d}S(\ypoint)}.
\label{eq: slip avg}
\end{equation}
Here, $\inner{\cdot}{\cdot}_{\dom}$ is the standard $\Lp^2$-\emph{inner product} on the set $\dom$ and unit tangent vector $\tangent(\ypoint) \in \torus$ to the line segment $\lineseg$ in the point $\ypoint$, i.e., it is the constant directional vector for the line segment $\lineseg$ that approximates $\roughness_0\cap \micdom$ (part of the smooth boundary that intersects with the micro domain). We now state sufficient conditions for well-posedness:

\begin{setting}\label{setting: interpolation}
Define $\interslip \colon \roughness_0 \to \Reals$ as $\interslip := \interp\bigl(\bigl\{(\point_n, \pointslip_{\point_n})\bigr\}_{n=1}^N \bigr)$ where the \emph{interpolation operator} $\interp\colon \roughness_0^N \times \Rn{N} \to \contspace(\roughness_0,\Reals)$ satisfies:
\begin{enumerate}[(i)]
 \item  $\interslip$ is continuous on $\roughness_0$.\label{item: slip continuity}
 \item  $\interslip(\point_n)=\pointslip_{\point_n}$ for all $n=1,\dots,N$.\label{item: slip interpolation}
 \item  $\interslip(\point) > 0$ for all $\point\in\roughness_0$.\label{item: slip positivity}
\end{enumerate} 
\end{setting}
Any composition $\comp$ of pointwise evaluation of $\pointslip$ followed by an interpolation operator that satisfies assumptions stated in \cref{setting: interpolation} constitutes a valid compression operator. Enquist and Carney propose a linear interpolation in~\cite{carney2021heterogeneous}. 
To connect to \cref{asm: micro_problem}, in this setting 
$M=2$ with $g(\micu,\micdom) =(\inner{\micu}{\tangent}_\lineseg, \inner{\partial_\normal\micu}{\tangent}_\lineseg)$, and $f(y_1,y_2,\dots, y_N)=\interp(\{(x_n,y_{n1}/y_{n2})\}_{n=1}^N)$.

\subsubsection{Method Summary}\label{sec: method summary} 
We are now able to fomulate the full \gls{HMM} proposed in~\cite{carney2021heterogeneous}. Fix $\epsilon > 0$. First, we construct a smooth approximation $\roughness_0$ of the rough wall $\roughness_\epsilon$ in such a way that the resulting smooth domain~$\dom_0$ is in the interior of $\dom_\epsilon$. Then pick $N$ evaluation points $\{\point_n\}_{n=1}^N$ on $\roughness_0$. Given $\roughness_\epsilon$, for each $n=1,\ldots, N$ we construct a micro domain $\micdom_{n,\epsilon}$ and a reconstruction operator  $\rec_{n,\epsilon} \colon \contspace(\micdom_{n,\epsilon}\cap\dom_0)\to \contspace(\micdombdry_{n,\epsilon})$ that satisfies the assumptions stated in \cref{setting: extrapolation}. Next, we pick an interpolation method $\interp\colon \roughness_0^N\times \Rn{N} \to \contspace(\roughness_0,\Reals)$ that satisfies assumptions stated in \cref{setting: interpolation}. The \gls{HMM} method for finding a homogenized solution to the multi scale Stokes problem~\eqref{eq: stokes_mom_eps} then proceeds to repeat the following steps until convergence:
\begin{enumerate}
 \item Given a macroscopic solution $\macu^{(k)}$,
 \item \textbf{Reconstruction:} Compute $\micbdryvel_{n,\epsilon}^{(k+1)}\!\!:=\rec_{n,\epsilon}[\macu^{(k)}]$ for all $n=1,\dots,N$,
 \item \textbf{Micro:} Solve~\eqref{eq: micro_problem stokes} on $\micdom_{n,\epsilon}$ and data $\micbdryvel_{n,\epsilon}^{(k+1)}$ to obtain $\micu_{n,\epsilon}^{(k+1)}$ for all $n$,\label{item: summary micro}
 \item \textbf{Compression:} \label{item: summary compress}
 \begin{itemize}
     \item Compute the slip amount $\pointslip_n^{(k+1)}$ from $\micu_{n,\epsilon}^{(k+1)}$ using~\eqref{eq: slip avg} for all $n$,
     \item Interpolate $\{\pointslip_n^{(k+1)}\}_{n=1}^N$ to $\interslip^{(k+1)}$,
 \end{itemize}
 \item \textbf{Macro:} Solve macro prob.~\eqref{eq: macro_problem stokes} with data $\interslip^{(k+1)}$ to obtain $\macu^{(k+1)}$.\label{item: summary macro}
\end{enumerate}

In the precomputed version of \gls{HMM} (see \cref{alg: adjoint HMM}), which we will now introduce, step~\eqref{item: summary micro} above is replaced by an initial precomputation before entering the loop:
\begin{itemize}
\item[0.] \textbf{Precomputation:} Solve~\eqref{eq: adjoint_problem}, obtaining Riesz representors $\adjmicu_{n,1},\adjmicu_{n,2}$ for all $n$.
\end{itemize}
Furthermore, in the first substep of \cref{item: summary compress} one can directly compute $\pointslip_n^{(k+1)}$ by leveraging~\cref{eq: adjoint_problem}. The precomputation reduces computational cost considerably when solving \gls{HMM}. 
The adjoint formulation is generally known and was indeed used in convergence proofs in~\cite{carney2021heterogeneous}, but is rarely explained as a part of the \gls{HMM} method. To implement it for FEM, one must either derive the variational formulation for the adjoint problem, or use automatic differentiation techniques to extract the adjoint numerically. We will formulate~\cref{eq: adjoint_problem} using the boundary integral formulation of Stokes equations. This approach is well suited for machine learning, as it results in a dimensionality reduction in the input and output data. We proceed with a brief introduction to \gls{BIE}. 
 
\section{Computing Riesz Representors}
Carney and Enquist use a \emph{finite element method} (FEM) to solve both the micro and micro problems coupled with polynomial interpolation of the slip amount and choose a periodic roughness to avoid extrapolation. We instead use a \emph{boundary integral formulation} of the Stokes equations to solve the micro problem~\eqref{eq: micro_problem stokes}. We outline the method below.

\subsection{Dimensionality Reduction with Boundary Integrals}\label{sec: dimensionality reduction}  
The \gls{BEM} and \gls{BIM}, are both solution methods based on reformulating PDE systems as integral equations on the boundary of the domain. The main advantage of such methods is \emph{dimensionality reduction}, since one only needs to discretize the boundary $\dombdry$ instead of the entire domain $\dom$. The main disadvantage is that evaluating the solution often involves computations with nearly singular integral kernels for points close to the boundary. Furthermore, the kernel decays slowly, resulting in dense system matrices. We refer to~\cite{Pozrikidis1992_bie_book} for an introduction to BIM.
 
Let $\stresslet$ denote a \emph{fundamental solution} for the Stokes equations, known as the \emph{stresslet}:
\begin{equation}\label{eq:Stresslet}
     \stresslet_{ijk}(\dpoint)=-\frac{1}{\pi}\frac{\dpointi_i\dpointi_j\dpointi_k}{\|\dpoint\|^4},  \quad\text{for $i,j,k\in\{1,2\}$ and $\dpoint\in\Rn{2}$}.
\end{equation}
The stresslet is related to the gradient of the \emph{Green's function} for the Stokes equations, known as the \emph{Stokeslet}. The stresslet can be used to construct a \emph{double layer potential} $\dlp$ that maps vector fields $\dens$ on the boundary $\dombdry$ to velocity fields in the interior $\dom$:
\begin{equation}
     \dlp[\dens]_i(\point) = \sum_{j=1}^2\sum_{k=1}^2\int_{\dombdry} 2 \stresslet_{ijk}(\point - \ypoint) \normal_j(\ypoint)\dens_k(\ypoint)\mathrm{d}s(\ypoint), \quad\text{for $\point\in\dom$ and $i\in\{1,2\}$,}\label{eq: layerpotential}
\end{equation}
where $\normal(y)$ is the unit vector normal to the wall $\dombdry_0$ at the point $y$. Then, $\uvec := \dlp[\dens]$ solves the Stokes equations~\eqref{eq: stokes_mom}--\eqref{eq: stokes_div} with $\force=0$. Next, if $\uvec$ solves the full Stokes problem, it has to satisfy the boundary condition~\eqref{eq: stokes_bc}. Taking the limit of the right-hand-side of \eqref{eq: layerpotential} as $\point$ goes from the interior towards a point $\point_0$ on the boundary an equating with the Dirichlet data from~\eqref{eq: stokes_bc}, we obtain a \gls{BIE}~\cite[p.~110]{Pozrikidis1992_bie_book}:

\begin{equation}
     \bdryvel(\point_0) = \lim_{\point\to\point_0}\uvec(\point) = \layerpotential[\dens](\point_0) \quad \text{for $\point_0 \in \dombdry$
     where $\layerpotential := (\tfrac{1}{2}\Idop + \dlp_{\princval})$.}
     \label{eq: bie} 
\end{equation}
In the above, $\dlp_\princval$ denotes the \emph{Cauchy principal value} of~\eqref{eq: layerpotential} evaluated on the boundary $\dombdry$ and $\Idop[\dens]=\dens$ is the identity. 
Then, \emph{solving the \gls{BIE} \eqref{eq: bie} for $\dens$ ensures that $\uvec(\point) := \dlp[\dens](\point)$ in \eqref{eq: layerpotential} solves Stokes equations~\eqref{eq: stokes_mom}--\eqref{eq: stokes_div} with boundary conditions~\eqref{eq: stokes_bc}.}
 
Due to compactness of $\dlp_{\princval}$ when $\dombdry$ is a Lyapunov surface~\cite{Pozrikidis1992_bie_book}, the operator $\layerpotential$ in \eqref{eq: bie} is a \emph{Fredholm integral of the second kind}, which has a bounded inverse. 
Furthermore, although the inverse of $\layerpotential$ is not known analytically, we can formally express the solution $\uvec$ and its directional derivatives $\partial_{a}\uvec$ in some arbitrary direction $a\in \Rn{2}$ in terms the boundary data $\micbdryvel \in \contspace(\dombdry,\Rn{2})$:
\begin{equation}
     \uvec = \dlp[\layerpotential^{-1}[\micbdryvel]] \quad\text{and}\quad \partial_{a}\uvec = \partial_{a}\dlp[\layerpotential^{-1}[\micbdryvel]].\label{eq: layerpotential gradient}
\end{equation}
Note that when $\uvec$ is evaluated away from the boundary $\dombdry$, the directional derivative $\partial_{a} \dlp$ is given as an integral operator whose kernel is the directional derivative of the stresslet. 

\subsection{Adjoint Micro Problems}\label{sec: adjoint micro problem} 
For this section we consider a fixed micro domain $\micdom$ with boundary data $\micbdryvel$ and a curve $\lineseg$ over which averages of the velocity field are computed. We drop the subscripts $\epsilon$ and $\point$. The goal is then to find Riesz representors, which we denote $\rone,\rtwo$, on the boundary $\micdombdry$ so that $\inner{\rone}{\micbdryvel}_{\micdombdry} = \inner{\micu}{\tangent}_{\lineseg}$ and $\inner{\rtwo}{\micbdryvel}_{\micdombdry} = \inner{\partial_\normal\micu}{\tangent}_{\lineseg}$ for all $\micbdryvel$. The Riesz representors provide a shortcut to compute the slip amount \eqref{eq: slip avg} instead of solving the micro problem~\eqref{eq: micro_problem stokes}:
\begin{equation}
     \pointslip = -\frac{\inner{\micu}{\tangent}_{\lineseg}}{\inner{\partial_\normal\micu}{\tangent}_{\lineseg}} = -\frac{\inner{\micbdryvel}{\rone}_{\micdombdry}}{\inner{\micbdryvel}{\rtwo}_{\micdombdry}}.\label{eq: slip avg adjoint}
\end{equation}
The Riesz representors are solutions to the adjoint problem~\eqref{eq: adjoint_problem} that depend on $\lineseg$ and $\micdombdry$ and can be posed using a variational, classical, or boundary integral formulation of Stokes equations. We consider the latter, and use~\eqref{eq: layerpotential gradient} to express the micro solution as $\micu = \dlp\bigl[\layerpotential^{-1}[\micbdryvel]\bigr]$ and its directional derivative as $\partial_\normal\micu=\partial_\normal\dlp\bigl[\layerpotential^{-1}[\micbdryvel]\bigr]$. 
 
One can derive exact expressions for the Riesz representors $\rone$ and $\rtwo$ directly from the boundary integral formulation \eqref{eq: bie}. Define the \emph{adjoint} of the \emph{double layer potential} restricted to a compact subset $K \subset \interior{\Omega}$ as $\dlp_K^{*}\colon \contspace(K,\Rn{2})\to\contspace(\dombdry,\Rn{2})$ where
\begin{equation}
     \dlp_K^*[\uvec]_i(\ypoint) = 2\normal_j(\ypoint)\int_{K} \stresslet_{ijk}(\point - \ypoint)\uvec_k(\point)\,\mathrm{d}s(\point) \quad\text{for $\ypoint\in \dombdry$.}\label{eq: layerpotential adjoint}
\end{equation}
Boundedness of the adjoint $\dlp_K^*$ follows from the fact that $(\point,\ypoint)\mapsto\stresslet_{ijk}(\point - \ypoint)\normal_j(\ypoint)$ is in $\contspace(K\times\dombdry, \Rn{2\times 2})$ since $\dombdry$ and $K$ are separated by a positive minimal distance. Since both $\dombdry$ and $K$ are compact, the continuity is also uniform. Similarly, the adjoint of $\layerpotential^{-1}$ is
\begin{equation}
     \layerpotential^{-*}[\dens] = (\tfrac{1}{2}\Idop + \dlp_{\princval}^*)^{-1}[\dens].\label{eq: layerpotential inverse ajdoint}
\end{equation}
We can now compute $\inner{\micu}{\tangent}_\lineseg$ and $\inner{\partial_\normal\micu}{\tangent}_\lineseg$ by the representation formula \eqref{eq: layerpotential gradient} for solutions to the Stokes problem, and then move operations to the tangent function $t$ using the adjoint of the double layer potential. The Riesz representor $\rone$ amounts to solving the \emph{adjoint equations of the first kind:}
\begin{equation}
 \rone = \layerpotential^{-*}[\trone], \quad \text{where}\quad \trone = \dlp^{*}_{\lineseg}[\tangent].\label{eq: rone}
\end{equation} 
Hence, one must first compute the adjoint of the \emph{restriction} $\dlp_\lineseg$ of the double layer potential $\dlp$ to the line segment $\lineseg$, then compute the adjoint of and invert the \emph{double layer potential} $\layerpotential$. We similarly compute $\rtwo$ as the solution to the \emph{adjoint equations of the second kind:}
\begin{equation}
 \rtwo = \layerpotential^{-*}[\trtwo] \quad \text{where}\quad \trtwo := (\partial_\normal\dlp)_\lineseg^{*}[\tangent],\label{eq: rtwo}
\end{equation}
where $(\partial_\normal\dlp_\lineseg)^*$ is the adjoint of the composition of the double layer potential and normal derivative operator. 
      
In the following, we consider the special case when $\lineseg$ is a line segment contained in $\micdom$. Then, the \emph{intermediate representors} $\trone$ and $\trtwo$ have closed-form expressions. 

\begin{proposition}\label{prop: intermediate representors}
Let $\lineseg$ be a straight line segment between two points $a, b$ inside $\micdom$. Let $\normal_{\lineseg}$ denote the normal vector of $\lineseg$, and similarly let $\normal(x)$ denote the normal vector at $x\in\micdombdry$. The intermediate Riesz representors $\trone, \trtwo$ can then be expressed as
\begin{multline*}
  \trone(\point) = \frac{1}{\pi}\Bigg(\frac{iz_{\normal_{\lineseg}}}{2}\left(\frac{z_{\normal(\point)}}{z_{\normal_\lineseg}}\log\left(\frac{z_\point - z_a}{z_\point - z_b}  \right)+\left(\frac{z_{\normal(\point)}}{z_{\normal_{\lineseg}}}-2\frac{\overline{z_{\normal(\point)}}}{\overline{z_{\normal_{\lineseg}}}}\right)\overline{\log\left(\frac{z_\point - z_a}{z_\point - z_b}\right)}\right)
  \\
  \shoveright{%
   + \overline{z_{\normal(\point)}}\frac{\ImOp\left[(z_\point - z_a)(\overline{z_\point} - \overline{z_b})\right]}{(\overline{z_\point} - \overline{z_a})(\overline{z_\point} - \overline{z_b})}\Bigg),}
  \\
  \shoveleft{%
  \trtwo(\point) = \frac{1}{\pi}\Bigg(z_{\normal_{\lineseg}}\ImOp\left[\frac{z_{\normal(\point)}(z_b - z_a)}{(z_\point - z_a)(z_\point - z_b)}\right] - \frac{(\overline{z_b}-\overline{z_a})\ImOp\left[z_{\normal_{\lineseg}}z_{\normal(\point)}\right]}{(\overline{z_\point} - \overline{z_a})(\overline{z_\point} - \overline{z_b})}
  }
  \\
  -\frac{(\overline{z_\point} - \overline{z_b})^2\ImOp\bigl[z_{\normal_{\lineseg}}(\overline{z_\point} - \overline{z_a})\bigr]-(\overline{z_\point} - \overline{z_a})^2\ImOp[z_{\normal_{\lineseg}}(\overline{z_\point} - \overline{z_b})]}{(\overline{z_\point} - \overline{z_b})^2(\overline{z_\point} - \overline{z_a})^2}\Bigg),
\end{multline*}
where $z_\point := \point\cdot (1, i)$ for any $\point\in \mathbb{R}^2$ and $\ImOp,\ReOp$ are the imaginary and real parts, respectively. 
\end{proposition}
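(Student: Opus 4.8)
The plan is to turn the definitions \eqref{eq: rone}--\eqref{eq: rtwo} of $\trone$ and $\trtwo$ into explicit integrals over the segment $\lineseg$, pass to the complex plane, and evaluate the resulting elementary integrals. First I would unwind the adjoints. Using \eqref{eq: rone}, \eqref{eq: rtwo}, the adjoint kernel \eqref{eq: layerpotential adjoint} with $K=\lineseg$, the representation \eqref{eq: layerpotential gradient} of $\partial_\normal\uvec$, and the facts that the stresslet \eqref{eq:Stresslet} is fully symmetric in its three indices and odd under $\dpoint\mapsto-\dpoint$, one obtains, for $\point\in\micdombdry$,
\[
 \trone(\point) = 2\int_\lineseg \stresslet(\ypoint-\point)[\normal(\point),\tangent]\,\mathrm{d}s(\ypoint), \qquad \trtwo(\point) = 2\int_\lineseg \bigl(\normal_{\lineseg}\cdot\grad\bigr)\stresslet(\ypoint-\point)[\normal(\point),\tangent]\,\mathrm{d}s(\ypoint),
\]
where $\tangent\equiv\tangent_{\lineseg}$ and $\normal_{\lineseg}$ are the constant unit tangent and normal of $\lineseg$, the bracket $\stresslet(\cdot)[u,v]$ denotes the contraction of two of the three slots of the stresslet against the constant vectors $u,v$ (the third left free), and $\normal_{\lineseg}\cdot\grad$ acts on the first argument. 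Since $\lineseg$ lies in the interior of $\micdom$ while $\point\in\micdombdry$, the distance $|\ypoint-\point|$ is bounded away from $0$ along the whole segment, so no principal values are needed.

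Next I would pass to complex coordinates. Identifying $\Rn{2}\cong\mathbb{C}$ through $z_\point$, every dot product of real two-vectors $u,v$ reads $u\cdot v=\ReOp[\overline{z_u}z_v]$ and $\|\dpoint\|^2=z_\dpoint\overline{z_\dpoint}$; hence the contracted stresslet, viewed as a complex number, expands into a linear combination of the three elementary kernels $z_\dpoint/\overline{z_\dpoint}^{\,2}$, $1/\overline{z_\dpoint}$ and $1/z_\dpoint$, with coefficients that are products of $z_{\normal_{\lineseg}}$, $z_{\normal(\point)}$ and their conjugates (using $z_{\tangent_{\lineseg}}=iz_{\normal_{\lineseg}}$ up to orientation). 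Parametrising $\lineseg$ by $w=z_a+s\,z_{\tangent_{\lineseg}}$ and setting $\zeta=w-z_\point$, one has $\mathrm{d}\zeta=z_{\tangent_{\lineseg}}\,\mathrm{d}s$ and the key linear relation $\overline{\zeta}=\overline{z_{\tangent_{\lineseg}}}^{\,2}\zeta+\kappa$ along the segment, with $\kappa$ constant (this is precisely the statement that $\lineseg$ is straight). Consequently $\mathrm{d}\overline{\zeta}$ is a constant multiple of $\mathrm{d}\zeta$, so each anti-holomorphic factor $\overline{\zeta}^{\,-m}$ can be traded for a holomorphic one, and the whole computation reduces to the integrals $\int_{z_a-z_\point}^{\,z_b-z_\point}\zeta^{-m}\,\mathrm{d}\zeta$ for $m=1,2$ (and $m=1,2,3$ for $\trtwo$, which carries the extra derivative $\normal_{\lineseg}\cdot\grad=z_{\normal_{\lineseg}}\partial_z+\overline{z_{\normal_{\lineseg}}}\partial_{\overline z}$ that raises every power by one).

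Then I would evaluate and match. The surviving integrals are elementary: $\int\zeta^{-1}\mathrm{d}\zeta$ yields a logarithm $\log\bigl((z_\point-z_a)/(z_\point-z_b)\bigr)$, $\int\zeta^{-2}\mathrm{d}\zeta$ a difference $1/(z_\point-z_a)-1/(z_\point-z_b)$, and $\int\zeta^{-3}\mathrm{d}\zeta$ a combination of $(z_\point-z_a)^{-2}$ and $(z_\point-z_b)^{-2}$ (all up to signs, and for the logarithm also up to branch); their anti-holomorphic counterparts produce the conjugated logarithm $\overline{\log(\cdot)}$ and the rational terms carrying $\overline{z_\point}-\overline{z_a}$ and $\overline{z_\point}-\overline{z_b}$ in the denominators. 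Substituting $z_b-z_a=|b-a|\,z_{\tangent_{\lineseg}}$ to cancel the segment length, writing $\kappa$ in terms of $z_a,z_b,z_\point$, and collecting terms reproduces the two displayed formulas; in particular the coefficient of $\overline{\log(\cdot)}$ in $\trone$ receives contributions from both the $z_\dpoint/\overline{z_\dpoint}^{\,2}$ and the $1/\overline{z_\dpoint}$ kernels, which is where the combination $\tfrac{z_{\normal(\point)}}{z_{\normal_{\lineseg}}}-2\tfrac{\overline{z_{\normal(\point)}}}{\overline{z_{\normal_{\lineseg}}}}$ originates, and for $\trtwo$ the raised powers eliminate all logarithmic contributions, leaving the purely rational expression with the $(\overline{z_\point}-\overline{z_a})^2(\overline{z_\point}-\overline{z_b})^2$ denominator.

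I expect the only real difficulty to be bookkeeping: (i) writing the contracted 2D stresslet and its $\normal_{\lineseg}$-derivative correctly in the basis $\{z_\dpoint/\overline{z_\dpoint}^{\,2},\,1/\overline{z_\dpoint},\,1/z_\dpoint\}$ with the right complex coefficients, and (ii) carrying all scalar prefactors (the $1/\pi$, the $i/2$, and the factors of $z_{\tangent_{\lineseg}}=iz_{\normal_{\lineseg}}$ picked up from $\mathrm{d}s$ and from the holomorphic/anti-holomorphic trade) together with the orientation conventions for $\normal$ and $\tangent$, so that the constants match exactly. A convenient internal consistency check is the alternative route through the identity $\stresslet_{ijk}(\dpoint)=\tfrac{1}{2\pi}\partial_i\bigl(\dpointi_j\dpointi_k/\|\dpoint\|^2\bigr)-\tfrac{1}{2\pi}\|\dpoint\|^{-2}(\delta_{ij}\dpointi_k+\delta_{ik}\dpointi_j)$: once contracted with $\tangent_i$, its first term is a tangential derivative along $\lineseg$ and telescopes to an evaluation at the endpoints $a,b$ (this already produces the rational parts), while the remaining terms integrate to the logarithms, matching the direct computation.
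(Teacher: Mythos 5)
Your proposal is correct and follows essentially the same route as the paper's own proof: both reduce $\trone$ and $\trtwo$ to explicit complex line integrals over the straight segment of the elementary kernels $1/\zeta$, $1/\overline{\zeta}$, $\zeta/\overline{\zeta}^{\,2}$ (and their $\normal_{\lineseg}$-derivatives, which raise the powers and remove the logarithms), and then evaluate these in closed form. The only organizational difference is that the paper starts from the complex-variable form of the double layer potential and identifies the representor by switching the order of integration, then integrates by recognizing exact differentials such as $\mathrm{d}\log\zeta$ and $\mathrm{d}\bigl(\zeta/\overline{\zeta}\bigr)$, whereas you contract the Cartesian stresslet through the already-defined adjoint \eqref{eq: layerpotential adjoint} and use that $\overline{\zeta}$ is an affine function of $\zeta$ along the straight segment -- the same elementary computation with different bookkeeping.
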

\begin{proof}
 See Appendix, \cref{sec: appendix: intermediate reps}.
\end{proof}
To find the Riesz representors $\rone,\rtwo$, we have to approximate $\layerpotential^{-*}$ in \eqref{eq: layerpotential inverse ajdoint}, e.g., with a Nyström discretisation or Galerkin projection of the spaces. 
\begin{remark}\label{rmk: bounded adjoint inverse}
 In the general case, $\layerpotential^{-*}$ is defined on the dual space of $\contspace(\dombdry,\Rn{2})$, which is the space of functions with bounded variation $BV([0,2\pi], \Rn{2})$ when $\dombdry$ is 1-dimensional. However, $\layerpotential^{*}\colon \contspace_0(\dombdry, \Rn{2}) \to \contspace(\dombdry,\Rn{2})$ is invertible, where $\contspace_0(\dombdry, \Rn{2})$ is the space of continuous vector fields on $\dombdry$ with zero net flow over the boundary, i.e., 
 \[ 
    \contspace_0(\dombdry, \Rn{2}) := 
    \biggl\{
    v\in \contspace(\dombdry, \Rn{2}) :
    \int_{\dombdry}\!\! v\cdot \normal= 0
    \text{ where $\normal$ is the normal to $\dombdry$}
    \biggr\}.
 \]
 By~\cite[theorem~1, p.~60]{Ladyzhenskaia2014-ko}, we get $\contspace_0(\dombdry, \Rn{2}) = \mathrm{span}\{\normal\}^\perp$.
 Hence, the inverse $\layerpotential^{-*}$ of the adjoint is well-defined as an operator from the space $\contspace(\dombdry, \Rn{2})$ to itself. This is sufficient, since we are only concerned with acting with $\layerpotential^{-*}$ on the continuous functions $\trone$ and $\trtwo$.
\end{remark}
Altogether, the slip amount $\pointslip$ in \eqref{eq: slip avg} can be expressed as a function of the Riesz representors as stated in~\eqref{eq: slip avg adjoint}. \Cref{alg: adjoint HMM} in \cref{sec: hmm} outlines the improved procedure, which starts with precomputing $\rvec_{1}, \rvec_{2}$ for each of the $N$ micro problems.

 \subsection{Discretization and Numerical Solution}\label{sec: discretization}
This section deals with discretising the problems~\eqref{eq: rone} and \eqref{eq: rtwo} to solve for the Riesz representors $\rone$ and $\rtwo$. We discretize each micro domain $\micdombdry$ into points $(\ypoint_i)_{i=1}^J$, \emph{uniformly} distributed in the parameter domain meaning $\ypoint_i = \diffeo(t_i)$, with $t_i = 2\pi i/J$ for $i=1,2,\dots, J$, where $\diffeo\colon [0,2\pi]\to \Rn{2}$ is a parameterization of the boundary. By \cref{prop: intermediate representors}, the intermediate representors $\tilde\rvec_{1,i} = \trone(\ypoint_i)$ and $\tilde\rvec_{2,i} = \trtwo(\ypoint_i)$ can be expressed analytically. We discretize the boundary integral~\eqref{eq: layerpotential adjoint} using the \emph{Nyström method} with \emph{trapezoidal rule}, and evaluate at each discretisation point $\ypoint_i$, i.e., we compute  $\rvec_{m,i}$ for $m=1,2$ by solving
\begin{equation}
 \rvec_{m,i} + \sum_{j=1}^J\layerpotential_{i,j}[\rvec_{m,j}]\Delta \ypointi_j= \tilde\rvec_{m,i}  \quad\text{with $\Delta \ypointi_j := \frac{2\pi}{J}\|\diffeo'(t_j)\|$ and $i=1,\dots,J$,}\label{eq: discrete layerpotential}
\end{equation}
Here $\layerpotential_{i,j}$ is a $2\times 2$ matrix for any fixed $i,j$, and the $n$:th element of the output ($n=1,2$) for a given input $v\in \Rn{2}$ is computed as:
\[
 (\layerpotential_{i,j}[\datavectwo])_n = \sum_{k,l=1}^2\normal_l(\ypoint_j)\stresslet_{nlk}(\ypoint_i - \ypoint_j)v_k.
\]
The resulting system matrix is the same for both representors $m=1,2$, but with different right hand sides. The apparently singular $i=j$ terms are well defined as continuous limits along the boundary (see Appendix, \cref{sec: appendix: boundary limits}). The resulting system of equations is dense but well-conditioned, and we obtain $\rvec_{m}$ with high accuracy using only a moderate number of GMRES iterations. Here we benefit from leveraging the \gls{BIE}, since it allows the data dimension to scale as $J$ instead of $J^2$, which is the dimension of inputs when representing the solution on a 2D $J\times J$ grid. 

In the case of multiscale Stokes flow, once discretized, the adjoint equations~\eqref{eq: rone} and~\eqref{eq: rtwo} result in dense but well-conditioned systems that can be solved efficiently with iterative solvers like \emph{GMRES} \cite{youcef_schultz_gmres}.
    
\section{Learning Riesz Representors}\label{sec: method}
It is clear from~\eqref{eq: discrete layerpotential} that the representors $r_m$ depend non-linearly on the boundary shape $\micdombdry$. 
We propose to approximate the non-linear maps $\micdom \mapsto r_m(\micdom)$ for $m=1,2$ with a \glsfirst{FNO}~\cite{li2022_geofno}. Approaches based on neural networks are well suited to capture the non-linear behavior of these two maps, but current neural network architectures and training protocols rarely achieve the same accuracy as classical numerical solvers based on FEM or BIM. 

Recall that for the multiscale Stokes problem~\eqref{eq: stokes_mom_eps}, one must solve multiple adjoint micro problems of the type~\eqref{eq: rone} and~\eqref{eq: rtwo}. The goal of this section is to present a unified framework for learning the map $\micdom \mapsto r_m(Y)$ for any micro boundary $Y$, where $r_m\in \micbdryspace(\micdombdry, \Rn{M})$. The key idea is to define the map in terms of a \emph{reference domain}, which in our case is the circle $\torus$. The geometry can then be represented by a function, and mapping the boundary to data on the boundary corresponds to a map between function spaces.
    
\subsection{The Geometric Learning Problem}
We let $\diffeospace$ be some set of twice differentiable \emph{Jordan curves} that maps the circle $\torus$ to a simple smooth curve in $\Rn{2}$:
\[ \diffeospace \subset \bigl\{\diffeo\in\contspace_2(\torus; \Rn{2})\mid \phi\;\text{injective} \bigr\}.
\]
Curves in $\diffeospace$ can now be used to generate domains $\micdom \subset \Rn{2}$. 
Simply note that for every curve $\diffeo\in\diffeospace$ there is a (micro) domain $\micdom \subset \Rn{2}$ that has this curve as boundary, i.e., $\micdombdry = \diffeo(\torus)$.
Next, formulating a learning problem over such domains involves introducing a $\diffeospace$-valued random variable. 
This is possible if we can define a probability space that has $\diffeospace$ as sample space. Then, randomly sampling from $\diffeospace$ corresponds to randomly sampling domains in $\Rn{2}$.

\begin{remark}
It is non-trivial to rigorously define a notion of probability measure on the set $\diffeospace$ in cases when it does not admit a finite dimensional parametrisation, and in practice, we restrict ourselves to this case.
\end{remark}

To formulate the learning problem, we adopt the notational convention where random variables are typeset in boldface. As an example, $\rv\uvec$ is a random variable (or random element) whereas $\uvec$ is an element in some vector space.

\begin{definition}[Geometric Supervised Operator Learning]
Let $\diffeospace$ and $\velspace_1\subset\micbdryspace(\torus, \Rn{d_1})$, $\velspace_2\subset\micbdryspace(\torus, \Rn{d_2})$ be measurable sets for some $d_1,d_2 \in \mathbb{N}$. Assume now that $(\rv\diffeo, \rv\datavec, \rv\datavectwo)$ is a $(\diffeospace\times\velspace_1\times \velspace_2)$-valued random variable consisting of a curve $\rv\diffeo$, input data $\rv\datavec$ and output data $\rv\datavectwo$. Given a  \emph{loss function} $\loss\colon \velspace_2\times\velspace_2\to\Reals$, we define \emph{geometric operator learning} as the task of learning a measurable map $\widehat{\oper}\colon\diffeospace\times \velspace_1\to\velspace_2$ that maps from the tuple $(\rv\diffeo,\rv\datavec)$ to $\rv\datavectwo$, by solving the supervised learning problem
\begin{equation}
 \widehat{\oper} \in \!\!\!\argmin_{\oper \colon\diffeospace\times \velspace_1\to\velspace_2}
 \!\!\!
 \mathbb{E}_{(\rv\diffeo, \rv\datavec, \rv\datavectwo)}\Bigl[\loss\bigl(\oper[\rv\diffeo, \rv\datavec], \rv\datavectwo\bigr)\Bigr].\label{eq: learning problem}
\end{equation}
\end{definition} 
    
The geometric learning problem \eqref{eq: learning problem} was introduced to the SciML community by Li et al in~\cite{li2022_geofno} for solving PDE problems on a 2D geometry with a geometry-aware architecture, called the \emph{geometric Fourier neural operator (geo-\gls{FNO})}, that parametrised the maps $\oper \colon\diffeospace\times \velspace_1\to\velspace_2$.
For the loss, we use the relative $\micbdryspace$-error:
\begin{equation}\label{eq: h1 loss}
 \loss(\datavec, \datavectwo) = \frac{\|\datavec - \datavectwo\|_{\Lp^2(\torus, \Rn{d_2})}^2}{\|\datavec\|^2_{\Lp^2(\torus, \Rn{d_2})}} + \frac{\|D\datavec - D\datavectwo\|_{\Lp^2(\torus, \Rn{d_2})}^2}{\|D\datavec\|^2_{\Lp^2(\torus, \Rn{d_2})}},
\end{equation}
where $D\datavec$ is the derivative of $\datavec$ tangent to the boundary. If we take $\rv\datavec$ to be the intermediate representors $(\rv\trone\circ\rv\diffeo, \rv\trtwo\circ\rv\diffeo)\in \micbdryspace(\torus, \Rn{4})$ and $\rv\datavectwo$ to be the representors $(\rv\rone\circ\rv\diffeo, \rv\rtwo\circ\rv\diffeo)\in\micbdryspace(\torus, \Rn{4})$, we jointly recover~\eqref{eq: rone} and \eqref{eq: rtwo} by solving \eqref{eq: learning problem} with $d_1=d_2=4$.

There are three key challenges that arise in working with the geometric operator learning problem~\eqref{eq: learning problem}. 
The first relates to computing the expectation, which requires access to the probability distribution for the $\Reals$-valued random variable $\loss\left(\oper[\rv\diffeo, \rv\datavec], \rv\datavectwo\right)$. We do not have access to this distribution and \cref{sec: learning problem} discusses how to address this challenge by means of training data.
Second, the minimisation in \eqref{eq: learning problem} is over all possible $\velspace_2$-valued measurable mappings  on $\diffeospace\times \velspace_1$. This set is far too large to work with from a computational point of view. \Cref{sec: architecture} suggests finite dimensional parametrisations of these mappings based in neural network architectures.
Finally, the random varible $\loss\left(\oper[\rv\diffeo, \rv\datavec], \rv\datavectwo\right)$ is defined in terms of random variables $\rv\diffeo, \rv\datavec$, and $\rv\datavectwo$ that take values in infinite dimensional spaces. Hence, numerically working with the latter is only possible after appropriate discretisation.

\subsection{Empirical learning problem} \label{sec: learning problem}

Given a finite set of (supervised) training data 
\[ \bigl((\diffeo_1, \datavec^1), \datavectwo^1 \bigr), \ldots, 
    \bigl((\diffeo_K, \datavec^K), \datavectwo^K \bigr) \in  (\diffeospace\times \velspace^{d_1}) \times \velspace^{d_2}
\]
consisting of curves $\diffeo_k$, input data $\datavec^k$ and output data $\datavectwo^k$, and a neural architecture $\oper_\param$ with $\param\in\Param$ as in \eqref{eq: fno}, we define the \emph{empirical learning problem} as minimizing $\param \mapsto \exploss(\param)$ for $\param\in\Param$ where
\begin{equation}
 \exploss(\param) := \frac{1}{K}\sum_{k=1}^K\frac{\|\oper_\param[\diffeo_k, \datavec^k] - \datavectwo^k\|^2}{\|\datavectwo^k\|^2}+\frac{\|D\oper_\param[\diffeo_k, \datavec^k] - D\datavectwo^k\|^2}{\|D\datavectwo^k\|^2}.\label{eq: discrete learning}
\end{equation} 
The main differences from~\eqref{eq: learning problem} is that the expectation is w.r.t. the probability distribution that is given by the training data, data and associated operators are discretised, and the minimization in \eqref{eq: learning problem} is over a finite dimensional space $\Param$. The norm is computed using the trapezoidal rule, and the differential operator $D$ is approximated using FFT.

\subsection{Neural Architecture}\label{sec: architecture}
There is a growing body of research on \emph{neural operators}, which are \emph{neural networks} defined as mappings between function spaces. A common feature of neural operators is \emph{discretisation-invariance}, meaning that the network can handle inputs of varying resolution. Although trained on finite-dimensional data, neural operators are typically designed to approximate operators defined on more general \emph{Banach/Hilbert} spaces, like $\Lp^{2}$. Notable examples include \emph{deep operator networks} (DeepONet)~\cite{karniadakis2019_deeponet}, \gls{FNO}s~\cite{li2021fno}, as well as \emph{BI-TDONet}~\cite{meng2024solvingpde} that was recently proposed for solving \gls{BIE}. Additional example are given in  \cite{cao2023lno,ying2019_bcrnet,ying2019_switchnet,goswami2022physicsinformed,karniadakis2024_en_deeponet}. 
    
We will use a variant of \gls{FNO} called \emph{geo-\gls{FNO}} to represent (discretised) mappings $\oper \colon\diffeospace\times \velspace_1\to\velspace_2$.
Let $\zspace^d=\contspace(\torus, \Rn{d})$ where $\torus$ is the \emph{unit circle}. We define the \emph{spectral convolution operator} $\fblock \colon \zspace^d \to\zspace^d$ (originally referred to as Fourier integral operator in~\cite{li2022_geofno}, we change the name to avoid confusion with the more well-known Fourier integral operator, as defined in PDE analysis) with respect to the kernel $\fkern_{nm}\colon \torus \to \Reals$, $m,n=1,\dots,d$:
\begin{equation}
 \fblock[z]_n(\point) = \invftfm\left[\sum_{m=1}^d\hat\fkern_{nm}(\cdot)\hat z_m(\cdot)\right](\point)\label{eq: fourier integral}
\end{equation}
where $z\in \zspace^d$ with components $z_m$ and $\point\in\torus$, $\ftfm$ is the \emph{Fourier transform} on $\torus$, and $\hat f(k) = \ftfm[f](k)$ is the Fourier transform of a function $f$ evaluated at the point $k$. To discretize, truncate $\hat\fkern$ to the $K$:th mode and replace $\ftfm$ with the \emph{discrete Fourier transform} (DFT). The spectral convolution is combined with a bias function $\bias\in \zspace^d$ and a pointwise linear transformation $\linop\colon \Rn{d}\to \Rn{d}$ to form a single \emph{Fourier neural operator block}:
\begin{equation}
 \fnoblock_{(W,\hat\fkern, b)}[z](\point) = \linop z(\point) + \fblock[z](\point) + \bias(\point).\label{eq: fno block}
\end{equation}
We write $\theta = (W,\hat\fkern, b)$ for notational compactness. The \emph{Fourier neural operator} 
\[ 
    \fno_\theta \colon \zspace^{d_1}\to \zspace^{d_2}
\]
is the composition of $L$ Fourier neural operator blocks $\fnoblock_{\theta^\ell}\colon \zspace^{d}\to \zspace^{d}$ with parameters $\linop^\ell\in\Rn{d\times d}$, $\hat\fkern^\ell\in\Cn{K\times d\times d}$, $\widehat\bias^\ell\in\Cn{K\times d}$ and constant \emph{width} $d$ for $\ell=1,2,\dots,L$, separated by pointwise-acting non-linear \emph{activation functions} $\activation\colon\Reals\to\Reals:$
\begin{equation}
 \fno_\theta = \fnolast\circ\fnoblock_{\theta^L}\circ\activation\circ\fnoblock_{\theta^{L-1}}\circ\dots\circ\activation\circ\fnoblock_{\theta^1}\circ\fnofirst
    \quad\text{where $\theta = (\fnofirst, \theta^1,\theta^2,\dots,\theta^L,\fnolast)$.}
    \label{eq: fno}
\end{equation}
The linear operators $\fnolast\in \Rn{d_1\times d}$ and $\fnofirst\in \Rn{d\times d_2}$ act pointwise similar to $\linop^\ell$. Typically, $\fnofirst$ \emph{lifts} the dimensions of the input to $d > M$ and $\fnolast$ \emph{projects} the output back to the desired dimension.

The geo-\gls{FNO} architecture, originally proposed in~\cite{li2022_geofno}, is a variant of the \gls{FNO} that incorporates a pullback $\diffeo^{-1}$ from arbitrary domains to the \emph{computational reference domain} $\torus$. The geo-\gls{FNO} replaces initial Fourier transform by a \emph{geometric Fourier transform}, defined as $\ftfm_\diffeo[\datavec] := \ftfm\bigl[|\mathrm{det}D \diffeo|\cdot (\datavec\circ\diffeo)\bigr]$. The original work computes $\ftfm_\diffeo$ as an integral on the domain of interest $\micdombdry$ by a change of variables, but we express all integrals on $\torus$ -- which means that the geo-\gls{FNO} $\geofno$ is simply a pushforward of the standard \gls{FNO} $\fno_\param$:
\begin{equation}
 \qquad \geofno_\param[\diffeo, \datavec](\point) = \fno_\param[\diffeo, \datavec\circ\diffeo]\bigl(\diffeo^{-1}(\point)\bigr)\quad\text{for $\point\in\diffeo(\torus)=\micdombdry$ and $ \param\in\Param$.}\label{eq: geofno}
\end{equation}
Here, $\param\in \Param\subset\Rn{(d_1+d_2)d + d^2L+d(1+d)KL}$ is a real-valued parameter that parameterizes the \gls{FNO}. We train the \gls{FNO} using the learning problem~\eqref{eq: learning problem}, and only construct the geo-\gls{FNO} after training. 
    
\subsection{Computational complexity at inference}
Given a $J$-point discretization of the micro problem boundary, classical numerical approaches must solve a dense linear $2J\times 2J$ system, which has complexity $\mathcal{O}(J^2P)$ for an iterative solver like GMRES, with $P$ iterations. Fast summation methods as the fast multipole method (FMM) or FFT based methods using Ewald decomposition can reduce this cost to $\mathcal{O}(JP)$ or $\mathcal{O}(J\log J P)$, but such approaches are advantageous only for sufficiently large systems. For example,~\cite{fmm_for_3d_stokes} finds conventional direct summation methods advantageous for $J<3000$ for FMM applied to BEM in 3D Stokes flow. Methods based on a Fourier decomposition of the problem data and solution have complexity $\mathcal{O}(K_{\mathrm{max}}^2P)$ where $K_{\mathrm{max}}$ is the maximal wave number. Importantly, the system matrix of these methods needs to be rebuilt for each new domain, whereas a neural network operates on stored weights.
    
To compare, the cost of an \gls{FNO} at inference is $\mathcal{O}(DCJ\log J + DK_{\mathrm{max}}C^2)$, where $C$ is the number of channels, $D$ is the network depth, and $K_{\mathrm{max}}$ the maximal wave number in the Fourier domain. However, estimates for the scaling of $C$ and $D$ typically rely on proving that the neural network can approximate a spectral solver~\cite{lanthaler2024nonlocalitynonlinearityimpliesuniversality}, which causes $C$ and $D$ to scale with the desired accuracy. We will not explore such complexity results in this paper.

\section{Error Analysis}\label{sec: error analysis}
The discrepancy between the optimal solution to~\eqref{eq: learning problem} and its empirical counterpart~\eqref{eq: discrete learning} can be split into three main terms representing the approximation error, generalization error and discretization error, respectively.
    
\Cref{sec: universality} discusses the \emph{approximation error} that quantifies the difference between the optimal solution and the best possible solution in the search space defined by the neural network architecture. The same section also mentions the \emph{generalization error}, which quantifies the difference between the expected loss~\eqref{eq: learning problem} on the full data distribution (corresponds to having all possible training data) and the empirical loss~\eqref{eq: discrete learning} defined by the finite training data. As such, it relates to the expressivity of the search space. However, the emphasis in this work (\cref{sec: model error}) is on assessing how the generalization error for learning the Riesz representors propagates to an error in the point wise estimates for the slip coefficient (\cref{cor: slip gen error}), which after interpolation causes a global error in the macro problem (\cref{thm: bounded homogenized slip error}). 

\subsection{Universality and Generality}\label{sec: universality}
Work on developing universal approximation theory for neural operators dates back to Chen and Chen in 1995, who showed that continuous operators on compact subsets of Banach spaces can be approximated to arbitrary precision by a shallow neural operator~\cite{chenchen1995}. The architecture proposed by Chen would later be used under the name ``DeepOnet''~\cite{karniadakis2019_deeponet}. Recently, it was shown that the \gls{FNO} in particular is a universal approximator of continuous operators between some Banach spaces, specifically Sobolev spaces~\cite{kovachki2021universal} and the space of continuously differentiable functions~\cite{lanthaler2024nonlocalitynonlinearityimpliesuniversality}. 
    
The operator $\layerpotential_{\dombdry}^{-*}\colon\contspace(\dombdry, \Rn{2})\to\contspace(\dombdry, \Rn{2})$ defined in~\eqref{eq: layerpotential inverse ajdoint} is continuous (see~\cref{rmk: bounded adjoint inverse}) and can therefore be approximated by the \gls{FNO} to arbitrary precision on a \emph{fixed geometry} $\dombdry$. We aim to learn the operator for any geometry, by considering the map $(\diffeo, \tilde\rvec_\ell)\mapsto \layerpotential_{\diffeo(\torus)}^{-*}[\tilde\rvec_\ell]$, which involves the curve $\diffeo$ as well as the data $\tilde\rvec_\ell$ for $\ell=1,2$. Universality of \gls{FNO} for this type of operator is less obvious, since the set of Jordan curves is not a Banach space. 

Luckily, the micro problems in this work can be well approximated by a coarse numerical method, which is a continuous operator on the discretized data. Discretization amounts to evaluation on a finite set of points (see \cref{sec: discretization}), and the evaluation operator is continuous on $\contspace(\dombdry,\Rn{2})$. Hence, the composition of the discretization followed by the numerical routine is itself continuous and therefore, an \gls{FNO} can approximate the numerical routine to arbitrary precision.  
    
\subsection{Model and Discretization Error}\label{sec: model error}
In the context of \gls{HMM}, there are additional error terms than those mentioned in~\cref{sec: universality} that affect the solution. One is the \emph{model error} inherent in the \gls{HMM}, which is the difference between the \gls{HMM} approximation $u_{\mathrm{HMM}}$ and the fully resolved solution $u_\epsilon$ to~\eqref{eq: stokes_mom_eps}. The model error was analyzed in~\cite{carney2021heterogeneous} for a specific toy problem, but so far there is no rigorous analysis of it in the general case. 

In this section we study the \emph{\cpl error} that is made with the learned approach to compute the slip amount. Our approach learns Riesz representors (that depend only on the geometry) instead of the slip amount itself (that depends also on the boundary conditions), which allows us to train on much less data, but significantly complicates the analysis. We will prove two main results. First, given sufficient alignment conditions on the background flow, we can show (\cref{cor: slip gen error}) that a bounded training error will result in a bound on the expected error of the slip amount $\pointslip(\point)$, given one arbitrary fixed micro domain centered at some point $\point$ on the rough boundary. 
    
Second, we show (\cref{thm: bounded homogenized slip error}) that a bounded expected error of the slip amount leads to a bound on the interpolated slip function $\interslip$ on the boundary, given sufficient regularity of the slip amount. This result uses a particular choice of sampling and interpolation strategy and stronger assumptions about the properties of the slip operator (\cref{asm: slip operator taylor}). We include justification for these assumptions in \cref{sec: lipschitz slip}. Finally, we connect the error bound from \cref{thm: bounded homogenized slip error} to the error in the macroscopic solution in \cref{prop: pde error bound}, using standard PDE energy norm estimates.

\subsubsection{A Pointwise Error Estimate}
To relate the training error, which is independent of boundary data, to the error in the slip coefficient, we must pose an additional constraint on the representors $\rone$, $\rtwo$ -- and in particular how well aligned they are to the boundary data $\micbdryvel$ in the sense of \cref{asm: well aligned}. We will avoid involving the characteristic micro scale $\epsilon$ in this section, but it becomes relevant for \cref{thm: bounded homogenized slip error}. 

\begin{definition}\label{asm: well aligned}
  Let $H$ be a Hilbert space endowed with the inner product $\inner{\cdot}{\cdot}$ and norm $\|\cdot\|$. Two elements $v$,$w$ of $H$ are \emph{$\eta$--aligned}  for some $0<\eta<1$ if 
 \[
     \frac{\inner{v}{w}}{\|v\|\|w\|}\geq \eta.
 \]
 Furthermore, let $\rv u, \rv v$ be $H$-valued random variables. Then, $\rv u,\rv v$ are almost surely uniformly $\eta$--aligned if
 \[
    \mathbb{P}\bigl(\{\rv u \text{ and } \rv v \text{ are not $\eta$--aligned}\} \bigr)=0. 
 \]
\end{definition}
We can now bound the error in the slip coefficient in the statistical sense, given that the correlation between the roughness structure at two different points on the wall decays sufficiently fast with respect to the distance between the points. Our main result in this section relies on the following general Lemma:
\begin{lemma}\label{lemma: expected ratio}
Let $\rv a_1, \rv a_2, \rv b_1, \rv b_2$ be random variables such that $\left|\rv a_1/\rv  a_2\right| < C$ holds a.s.
Furthermore, assume 
\[
     \expectp{\left(\frac{\rv a_1- \rv b_1}{\rv a_1}\right)^2} \leq \delta_1^2
     \quad\text{and}\quad 
     \expectp{\left(\frac{\rv a_2-\rv b_2}{\rv b_2}\right)^2} \leq \delta_2^2.
\]
Then $\expectp{\Bigl|\rv a_1/\rv a_2-\rv b_1/\rv b_2\Bigr|}<C(\delta_1+\delta_2+\delta_1\delta_2)$.
\end{lemma}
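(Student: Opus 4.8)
The plan is to express the difference $\rv a_1/\rv a_2 - \rv b_1/\rv b_2$ in a way that isolates the two relative errors controlled by the hypotheses, then apply Cauchy--Schwarz (in the form $\expectp{|\rv X|}\le\sqrt{\expectp{\rv X^2}}$) to each piece. Concretely, write
\[
  \frac{\rv b_1}{\rv b_2} = \frac{\rv a_1}{\rv a_2}\cdot\frac{\rv b_1}{\rv a_1}\cdot\frac{\rv a_2}{\rv b_2},
\]
and introduce the relative errors $\rv\epsilon_1 := (\rv a_1 - \rv b_1)/\rv a_1$ and $\rv\epsilon_2 := (\rv a_2 - \rv b_2)/\rv b_2$, so that $\rv b_1/\rv a_1 = 1 - \rv\epsilon_1$ and $\rv a_2/\rv b_2 = 1 + \rv\epsilon_2$. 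Then
\[
  \frac{\rv a_1}{\rv a_2} - \frac{\rv b_1}{\rv b_2}
  = \frac{\rv a_1}{\rv a_2}\bigl(1 - (1-\rv\epsilon_1)(1+\rv\epsilon_2)\bigr)
  = \frac{\rv a_1}{\rv a_2}\bigl(\rv\epsilon_1 - \rv\epsilon_2 + \rv\epsilon_1\rv\epsilon_2\bigr).
\]
Taking absolute values and using $|\rv a_1/\rv a_2| < C$ almost surely gives the pointwise bound $|\rv a_1/\rv a_2 - \rv b_1/\rv b_2| \le C\bigl(|\rv\epsilon_1| + |\rv\epsilon_2| + |\rv\epsilon_1||\rv\epsilon_2|\bigr)$.

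Next I would take expectations and bound each term. By Cauchy--Schwarz (Jensen), $\expectp{|\rv\epsilon_1|} \le \sqrt{\expectp{\rv\epsilon_1^2}} \le \delta_1$ and likewise $\expectp{|\rv\epsilon_2|}\le\delta_2$; for the cross term, $\expectp{|\rv\epsilon_1\rv\epsilon_2|}\le \sqrt{\expectp{\rv\epsilon_1^2}\,\expectp{\rv\epsilon_2^2}}$ is not quite what we want directly, but more simply $\expectp{|\rv\epsilon_1\rv\epsilon_2|}\le\sqrt{\expectp{\rv\epsilon_1^2}}\sqrt{\expectp{\rv\epsilon_2^2}}\le\delta_1\delta_2$ by Cauchy--Schwarz applied to the product. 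Summing yields
\[
  \expectp{\Bigl|\frac{\rv a_1}{\rv a_2} - \frac{\rv b_1}{\rv b_2}\Bigr|}
  \le C\bigl(\delta_1 + \delta_2 + \delta_1\delta_2\bigr),
\]
and the strict inequality in the statement follows from the strict inequality $|\rv a_1/\rv a_2|<C$ (assuming the relevant events have positive probability; otherwise one can absorb this into a harmless non-strict version, but the stated strictness is inherited from the hypothesis on $C$).

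There is essentially no deep obstacle here; the proof is a short algebraic manipulation plus Cauchy--Schwarz. The one point requiring a little care is the bookkeeping of \emph{which} denominator appears in each relative error — the hypothesis normalizes $\rv\epsilon_1$ by $\rv a_1$ but $\rv\epsilon_2$ by $\rv b_2$ — so the factorization must be chosen (as above) to match exactly those normalizations, which is why $\rv b_1/\rv a_1$ and $\rv a_2/\rv b_2$ appear rather than the other way around. A secondary minor point is ensuring all the quantities are well-defined almost surely: $\rv a_2\neq 0$ follows from $|\rv a_1/\rv a_2|<C$ being meaningful, and one implicitly needs $\rv a_1\neq0$ and $\rv b_2\neq0$ for the relative errors in the hypotheses to make sense, so these can be taken as part of the standing assumptions.
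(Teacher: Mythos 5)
Your proposal is correct and follows essentially the same route as the paper's own proof: factor out $\rv a_1/\rv a_2$, express the remainder via the two relative errors (the paper uses $\rv e_2=(\rv b_2-\rv a_2)/\rv b_2$, a harmless sign flip of your $\rv\epsilon_2$), then apply the a.s.\ bound, the triangle inequality, and Jensen/Cauchy--Schwarz termwise. No substantive differences.
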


The main result of this section follows almost immediately from the above lemma:  
\begin{corollary}\label{cor: slip gen error}
 Let $\rv\micdombdry$ be a random domain generated by a $\diffeospace$-valued random element. We write $\inner{\cdot}{\cdot}_{\rv\micdombdry}$ and $\|\cdot\|_{\rv\micdombdry}$ for the standard $L^2$-inner product and norm over $\rv\micdombdry$, respectively. Let $\oper[\rv\trone,\rv\micdombdry]$ and $\oper[\rv\trtwo,\rv\micdombdry]$ be approximate solutions to the Riesz representors $\rv\rone$ and $\rv\rtwo$ on $\rv\micdombdry$, all taking values in $\contspace(\rv\micdombdry,\Rn{2})$. 
 Assume $\rv\rone$ and $\rv\oper[\rv\trtwo,\rv\micdombdry]$ are both almost surely uniformly $\eta_1$- and $\eta_2$-aligned with $h$, respectively. Furthermore, assume also that the slip amount $\inner{\rv\rone}{\micbdryvel}_{\rv\micdombdry}/\inner{\rv\rtwo}{\micbdryvel}_{\rv\micdombdry}$ is almost surely bounded by $C$ for some $C>0$. Finally, assume that the training losses are bounded by $\delta>0$ in the sense that
 \begin{equation}\label{eq: training bound}
     \expectp{\frac{\|\oper[\rv\trone,\rv\micdombdry]-\rv\rone\|^2_{\rv\micdombdry}}{\|\rv\rone\|^2_{\rv\micdombdry}}}\leq \delta^2
     \qquad\text{and}\qquad \expectp{\frac{\|\oper[\rv\trtwo,\rv\micdombdry]-\rv\rtwo\|^2_{\rv\micdombdry}}{\|\oper[\rv\trtwo,\rv\micdombdry]\|_{\rv\micdombdry}^2}}\leq \delta^2.
 \end{equation}
 Then, the error in the estimated slip amount is bounded as
 \[   \expectp{\left|\frac{\inner{\rv\rone}{\micbdryvel}_{\rv\micdombdry}}{\inner{\rv\rtwo}{\micbdryvel}_{\rv\micdombdry}} - \frac{\inner{\oper[\rv\trone,\rv\micdombdry]}{\micbdryvel}_{\rv\micdombdry}}{\inner{\oper[\rv\trtwo,\rv\micdombdry]}{\micbdryvel}_{\rv\micdombdry}}\right|}\leq C\left(\delta(\eta_1^{-1}+\eta_2^{-1})+\delta^2\eta_1^{-1}\eta_2^{-1}\right).
 \]
\end{corollary}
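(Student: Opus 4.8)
The plan is to reduce the statement to a direct application of \cref{lemma: expected ratio}. Set
\[
\rv a_1 := \inner{\rv\rone}{\micbdryvel}_{\rv\micdombdry},\quad
\rv a_2 := \inner{\rv\rtwo}{\micbdryvel}_{\rv\micdombdry},\quad
\rv b_1 := \inner{\oper[\rv\trone,\rv\micdombdry]}{\micbdryvel}_{\rv\micdombdry},\quad
\rv b_2 := \inner{\oper[\rv\trtwo,\rv\micdombdry]}{\micbdryvel}_{\rv\micdombdry},
\]
so that the quantity to be bounded is exactly $\expectp{|\rv a_1/\rv a_2-\rv b_1/\rv b_2|}$. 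These are genuine real-valued random variables because $\rv\rone,\rv\rtwo,\oper[\rv\trone,\rv\micdombdry]$ and $\oper[\rv\trtwo,\rv\micdombdry]$ are $\contspace(\rv\micdombdry,\Rn{2})$-valued, and the hypothesis $|\rv a_1/\rv a_2|<C$ a.s.\ of the lemma is precisely the assumed a.s.\ bound on the slip amount. It then remains to verify the two relative-error hypotheses of the lemma, and I claim they hold with $\delta_1=\delta\eta_1^{-1}$ and $\delta_2=\delta\eta_2^{-1}$.

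For the numerator I would write $\rv a_1-\rv b_1=\inner{\rv\rone-\oper[\rv\trone,\rv\micdombdry]}{\micbdryvel}_{\rv\micdombdry}$ and estimate it by Cauchy--Schwarz, while bounding the denominator below using the $\eta_1$-alignment of $\rv\rone$ with $\micbdryvel$, namely $|\rv a_1|\ge\eta_1\|\rv\rone\|_{\rv\micdombdry}\|\micbdryvel\|_{\rv\micdombdry}$ a.s.\ (which also uses implicitly that $\micbdryvel\neq0$ a.s.). The factor $\|\micbdryvel\|_{\rv\micdombdry}$ cancels and one is left with $|(\rv a_1-\rv b_1)/\rv a_1|\le\eta_1^{-1}\,\|\rv\rone-\oper[\rv\trone,\rv\micdombdry]\|_{\rv\micdombdry}/\|\rv\rone\|_{\rv\micdombdry}$; squaring and taking expectations, the first bound in \eqref{eq: training bound} gives $\expectp{((\rv a_1-\rv b_1)/\rv a_1)^2}\le\delta^2\eta_1^{-2}$. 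The denominator estimate is the same computation with one deliberate twist: to control $(\rv a_2-\rv b_2)/\rv b_2$ one must lower-bound $|\rv b_2|$, and for this one uses the $\eta_2$-alignment of $\oper[\rv\trtwo,\rv\micdombdry]$ --- not of $\rv\rtwo$ --- with $\micbdryvel$, getting $|\rv b_2|\ge\eta_2\|\oper[\rv\trtwo,\rv\micdombdry]\|_{\rv\micdombdry}\|\micbdryvel\|_{\rv\micdombdry}$ (which also shows $\rv b_2\neq0$ a.s., so the ratio $\rv b_1/\rv b_2$ is well defined). This is exactly why the second training loss in \eqref{eq: training bound} is normalised by $\|\oper[\rv\trtwo,\rv\micdombdry]\|_{\rv\micdombdry}$ rather than by $\|\rv\rtwo\|_{\rv\micdombdry}$; with that normalisation the cancellation again works and $\expectp{((\rv a_2-\rv b_2)/\rv b_2)^2}\le\delta^2\eta_2^{-2}$.

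Plugging $\delta_1=\delta\eta_1^{-1}$ and $\delta_2=\delta\eta_2^{-1}$ into the conclusion $C(\delta_1+\delta_2+\delta_1\delta_2)$ of \cref{lemma: expected ratio} produces $C\bigl(\delta(\eta_1^{-1}+\eta_2^{-1})+\delta^2\eta_1^{-1}\eta_2^{-1}\bigr)$, which is the asserted bound. I do not expect any serious obstacle here; the only point requiring care is bookkeeping --- matching each alignment constant to the correct quantity ($\rv\rone$ and $\|\rv\rone\|_{\rv\micdombdry}$ in the numerator ratio, $\oper[\rv\trtwo,\rv\micdombdry]$ and $\|\oper[\rv\trtwo,\rv\micdombdry]\|_{\rv\micdombdry}$ in the denominator ratio) --- which is precisely what dictates the asymmetric form of the two normalisations appearing in \eqref{eq: training bound}.
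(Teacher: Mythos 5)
Your proposal is correct and follows essentially the same route as the paper's own proof: define $\rv a_1,\rv a_2,\rv b_1,\rv b_2$ as the four inner products with $\micbdryvel$, verify the relative-error hypotheses of \cref{lemma: expected ratio} via Cauchy--Schwarz together with the $\eta_1$-alignment of $\rv\rone$ and the $\eta_2$-alignment of $\oper[\rv\trtwo,\rv\micdombdry]$ (which is exactly what dictates the asymmetric normalisation in \eqref{eq: training bound}), and conclude with $\delta_1=\delta\eta_1^{-1}$, $\delta_2=\delta\eta_2^{-1}$. Your writeup is, if anything, slightly more careful than the paper's, which silently abbreviates $\oper[\rv\trone,\rv\micdombdry]$ by $\rv\trone$ and only states that the second bound "is obtained similarly."
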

\begin{proof}
 See \cref{sec: appendix: slip estimates}.
\end{proof}
Equation~\eqref{eq: training bound} approximates the training loss when $\rv\micdombdry$ is sampled from the same distribution that we trained on. Note that even though our construction requires $\rv\rone,\rv\rtwo,\rv\trone,\rv\trtwo$ to be random variables, they are in practice $\rv\micdombdry$-measurable. 

The training error in~\eqref{eq: training bound} is slightly different from the bound used in~\eqref{eq: h1 loss}. First, we need to control the inverse of the norm of $\oper[\rv\trtwo,\rv\micdombdry]$, which is why it appears in the denominator. Second, we use the $H^1$-norm in practice, but this is not a problem for the theory since a bound in the $H^1$-norm also yields a bound in the $L_2$-norm. 
\begin{remark}
 We need to assume that training data is $\eta$-aligned. This can be hard to verify in general, but it can be done for some special cases. One is when the micro problem has periodic boundary conditions as shown in \cref{prop: aligned bounds}.
\end{remark}

\subsubsection{An Interpolation Error Estimate}
We now aim to show that interpolating over the boundary results in an error that is bounded. For the sake of analysis, it makes sense to model the boundary as a stochastic process. In order to control the interpolation error,  we need to assume that the slip amount is bounded and Lipschitz as a function on the smooth surface $\roughness_0$. For the purpose of this section, we assume that the rough wall is parameterized by a normal offset:
\[
    \roughness_\epsilon = \bigl\{\point+\epsilon\rv\varphi(\point_t/\epsilon)\normal(\point)\mid \point\in \roughness_0\bigr\},
\]
where $\rv\varphi$ is a random $C^2$ function.
\begin{definition}\label{setting: slipop}
 The slip amount $\alpha\colon\roughness_0\to\mathbb{R}$ at the point on the smooth surface $\point \in \roughness_0$ is computed as follows: 
 \begin{itemize}
     \item construct a micro domain $\micdom_x$ such that $\micdombdry_x \cap \roughness_\epsilon = \mathcal{B}_\epsilon(x)\cap \roughness_\epsilon$, where $\mathcal{B}_\epsilon(x)$ is an $\epsilon$-sized ball around $x$.
     \item solve the adjoint problems~\eqref{eq: rone}--\eqref{eq: rtwo} on $\micdom_x$, and
     \item compute the slip coefficient using~\eqref{eq: slip avg}.
 \end{itemize}
\end{definition}
\begin{assumption}\label{asm: slip operator taylor}
 The \emph{slip amount} is uniformly bounded and Lipschitz. Specifically, there exists $C>0$ and $\gamma_1,\gamma_3>0$ such that
 \[
     \alpha(x) \leq \gamma_1 C
     \quad\text{and}\quad 
     |\alpha(x)-\alpha(y)|\leq \frac{\gamma_1}{\gamma_3} C|x-y|
     \quad\text{for any $x,y\in \roughness_0$}
 \]
 where $\lim_{\epsilon\to 0}\epsilon/\gamma_1=\lim_{\epsilon\to 0}\gamma_1/\gamma_3 = 0$.
\end{assumption}
    
\begin{remark}
The condition that $\rv\varphi$ is continuous is needed for $x \mapsto \alpha(x)$ to be well defined. \cref{asm: slip operator taylor} is motivated by a simple example in \cref{sec: lipschitz slip}. Roughly, $\gamma_1$ is the position of the evaluation line $\lineseg$, and $\gamma_3$ is the finite width of the micro problem.

\end{remark}
Finally, we define an interpolation operator. Although we use a general construction, a simple piecewise linear interpolation is sufficient to satisfy our requirements. 
\begin{definition}\label{def: interpolation}
 Let $\{\point_n\}_{n=1}^N\subset K$ be a discretization of a set $K$. We define an interpolant as a linear operator $\mathcal{I}\colon \Rn{N}\to C(K,\Reals)$ that satisfies $\mathcal{I}\bigl[(\alpha_j)_{j=1}^N\bigr](\point_n) = \alpha_n$ for all $n=1,\dots,N$. We additionally require $\mathcal{I}$ to be continuous and such that there is some $C>0$ where
 \[
    \sup_{z\in K} \Bigl|\mathcal{I}\bigl[(\alpha_j)_{j=1}^N\bigr](z)\Bigr| \leq C\max_{n=1,\dots,N}|\alpha_n|
    \quad\text{holds for any $(\alpha_j)_{j=1}^N\subset \Rn{N}$.}
 \]
 Finally, we say $\mathcal{I}$ tightly interpolates the Lipschitz function $\alpha\colon K\to \Reals$ to the first order if 
 \[     \Bigl|\mathcal{I}\bigl[(\alpha(\point_j))_{j=1}^N\bigr](z) - \alpha(z)\Bigr|\leq C L \max_{z\in K}\min_{n=1,\dots,N}\|z-\point_n\|,
 \] 
 holds with $C>0$ independent of $L>0$ (the Lipschitz constant of the function $\alpha$).
\end{definition}
We are now ready to state the main result of this section.
\begin{theorem}\label{thm: bounded homogenized slip error}
 Let $\rv\varphi, \rv{\alpha}$ be as in \cref{setting: slipop}, and suppose $\rv{\alpha}$ satisfies \cref{asm: slip operator taylor} uniformly with $C,\gamma_1,\gamma_3$ known. Next assume that we have sample points $\{\point_n\}_{n=1}^N$ in $\roughness_0$ with 
 \[\max_{z\in \roughness_0}\min_{n=1,\dots,N}\|z-\point_n\|\leq |\roughness_0|/N
 \quad\text{where $|\roughness_0|$ is the length of the domain.}
 \]
 Finally, assume $\{\rv{\tilde\alpha}_n\}_{n=1}^N$ estimates $\rv\alpha$ with an expected error bounded as
 \[ \expectp{\bigl|\rv{\tilde\alpha}_n-\rv{\alpha}(\point_n)\bigr|}\leq \delta \gamma_2
 \quad\text{for some $\delta, \gamma_2>0$ and all $n=1,\dots,N$.}
 \]
 Then, for any interpolant $\rv{\tilde\alpha}(z)=\mathcal{I}\bigl[(\rv{\tilde\alpha}_n)_{n=1}^N\bigr](z)$ with $z\in\roughness_0$ that interpolates $\rv{\alpha}$ to the first order (\cref{def: interpolation}), there exists $N$ so that the error is bounded over $\roughness_0$ as
 \[     \expectp{\sup_{z\in\roughness_0}\bigl|\rv{\tilde\alpha}(z) - \rv\alpha(z)\bigr|} 
     \leq 2\gamma_1 C\sqrt{|\roughness_0| \delta \gamma_2/(\gamma_1\gamma_3)}.
 \]
\end{theorem}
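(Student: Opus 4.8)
The plan is to bound $\sup_{z\in\roughness_0}\bigl|\rv{\tilde\alpha}(z)-\rv\alpha(z)\bigr|$ by a triangle inequality that separates a purely \emph{deterministic interpolation error} from a \emph{statistical estimation error}, and then to choose the number of sample points $N$ so as to balance the two contributions. Concretely, I would insert the interpolant of the exact nodal values and use linearity of $\mathcal{I}$ to write
\[
 \rv{\tilde\alpha}(z)-\rv\alpha(z)
 = \mathcal{I}\bigl[(\rv{\tilde\alpha}_n-\rv\alpha(\point_n))_{n=1}^N\bigr](z)
 \;+\;\Bigl(\mathcal{I}\bigl[(\rv\alpha(\point_n))_{n=1}^N\bigr](z)-\rv\alpha(z)\Bigr),
\]
so that the first term carries only the node-wise estimation errors and the second is the classical interpolation error of the Lipschitz function $\rv\alpha$.

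For the statistical term I would apply the uniform stability bound from \cref{def: interpolation}, $\sup_z\bigl|\mathcal{I}[(\beta_n)_n](z)\bigr|\le C\max_n|\beta_n|$, with $\beta_n=\rv{\tilde\alpha}_n-\rv\alpha(\point_n)$, and then estimate the expected maximum crudely by the sum, $\expectp{\max_n|\beta_n|}\le\sum_{n=1}^N\expectp{|\beta_n|}\le N\delta\gamma_2$ — which is essentially all that is available given only a per-node $L^1$ bound and no independence. For the interpolation term I would invoke the ``tight first-order interpolation'' property of $\mathcal{I}$ together with \cref{asm: slip operator taylor}: since $\rv\alpha$ is Lipschitz with constant $L=(\gamma_1/\gamma_3)C$, the error is at most $C L\,\max_z\min_n\|z-\point_n\|\le C L\,|\roughness_0|/N$ by the fill-distance hypothesis. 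Combining, taking expectations, and consolidating the constants of \cref{def: interpolation} and the Lipschitz constant into $C$, this yields a bound of the form $\expectp{\sup_{z}|\rv{\tilde\alpha}(z)-\rv\alpha(z)|}\le aN+b/N$ with $a$ proportional to $C\delta\gamma_2$ and $b$ proportional to $C\gamma_1|\roughness_0|/\gamma_3$.

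Finally I would optimize over $N$: the map $N\mapsto aN+b/N$ is minimized near $N_\star=\sqrt{b/a}\asymp\sqrt{\gamma_1|\roughness_0|/(\gamma_3\delta\gamma_2)}$, with value $2\sqrt{ab}$; choosing $N$ to be the nearest admissible positive integer to $N_\star$ — which is what ``there exists $N$'' in the statement refers to — gives $2\sqrt{ab}\asymp 2\gamma_1 C\sqrt{|\roughness_0|\,\delta\gamma_2/(\gamma_1\gamma_3)}$, the claimed bound. A minor point to dispatch along the way is measurability of $z\mapsto\sup_{z\in\roughness_0}|\cdots|$ as a random variable, which follows because the interpolant is continuous and $\roughness_0$ is separable, so the supremum is attained over a countable dense subset.

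The main obstacle is the tension created by the lossy step $\expectp{\max}\le\sum\expectp{\,\cdot\,}$: it inflates the statistical term by a factor $N$, so $N$ cannot be taken arbitrarily large, whereas the interpolation term demands $N$ large; the substance of the theorem is precisely that the balancing point still leaves a clean $\sqrt{\delta}$ dependence, i.e.\ a bounded per-node estimation error $\delta\gamma_2$ produces a global error of order $\sqrt{\delta}$. A secondary difficulty is the careful bookkeeping needed to make the two separately named constants $C$ (one in \cref{asm: slip operator taylor}, one in \cref{def: interpolation}) and the Lipschitz constant combine into the exact prefactor $2\gamma_1 C$, and to confirm that the chosen $N_\star$ is at least $1$ in the intended asymptotic regime $\epsilon\to0$ with $\gamma_1/\gamma_3\to0$.
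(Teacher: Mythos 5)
Your proposal is correct and follows essentially the same route as the paper's proof: the same linearity-based decomposition into a nodal estimation term and a deterministic interpolation term, the same stability and tight-interpolation bounds (with the expected maximum bounded by the sum, giving the $\gamma_2\delta C N$ term), and the same balancing of $aN + b/N$ over $N$ to get the $2\gamma_1 C\sqrt{|\roughness_0|\delta\gamma_2/(\gamma_1\gamma_3)}$ bound. Your remarks on the max-by-sum loss and on the constant bookkeeping are consistent with how the paper treats (and slightly glosses over) those same points.
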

\begin{proof}
The proof is essentially a repeated application of the triangle inequality. Let $z\in \roughness_0$ be fixed. Combining linearity, continuity and tightness of $\mathcal{I}$ with Lipschitz continuity of $\rv\alpha$,
\begin{align*}
     \bigl|\rv{\tilde\alpha}(z) - \rv\alpha(z)\bigr| &= \Big|\mathcal{I}\bigl[(\rv{\tilde\alpha}_n-\rv\alpha(x_n))_{n=1}^N \bigr](z) + \mathcal{I}\bigl[(\rv\alpha(x_n))_{n=1}^N\bigr](z) - \rv\alpha(z)\Big|\\
     &\leq C\max_{n=1,\dots,N}\bigl|\rv{\tilde\alpha}_n-\rv\alpha(x_n)\bigr| + CL\max_{z\in\roughness_0}\min_{n=1,\dots,N}\|z-x_n\|\\
     &\leq C\max_{n=1,\dots,N}\bigl|\rv{\tilde\alpha}_n-\rv\alpha(x_n)\bigr| + \frac{\gamma_1 C|\roughness_0|}{\gamma_3 N}.
\end{align*}
Taking the supremum over $z \in \roughness_0$ followed by the expectation yields
\begin{align*}
 \expectp{\sup_{z\in\roughness_0}\bigl|\rv{\tilde\alpha}(z) - \rv\alpha(z)\bigr|} &\leq \frac{\gamma_1 C|\roughness_0|}{\gamma_3 N}+C\expectp{\max_{n=1,\dots,N}\bigl|\rv{\tilde\alpha}_n-\rv\alpha(x_n)\bigr|}\leq \frac{\gamma_1 C|\roughness_0|}{\gamma_3 N}+ \gamma_2\delta C N.
\end{align*}
The above estimate is minimized when $N = \sqrt{|\roughness_0|\delta^{-1}\gamma_1/(\gamma_2\gamma_3)}$, giving the result.
\end{proof}

A key observation here is that taking the supremum norm results in a loss of accuracy -- a point wise error of $\delta$ gives rise to a interpolation error $\mathcal{O}(\sqrt{\delta})$. It is possible to improve this result to $\mathcal{O}(\delta\log(1/\delta))$, but that requires strong assumptions on the error distribution (existence of a moment generating function), which are outside the scope of this work.

\subsubsection{Combining the Two Estimates}
Given \cref{thm: bounded homogenized slip error} and \cref{cor: slip gen error}, we can now formulate a theorem relating the generalisation error to the error in the slip amount. We refer to the toy example studied in \cref{sec: appendix: alignment estimates} as a motivation for our choices of $\eta_1$ and $\eta_2$. For this final step, we need to assume that the distribution of the random micro domains is stationary, which we will do by simply assuming that \cref{cor: slip gen error} holds uniformly for all $x\in \roughness_0$. 

\begin{theorem}\label{thm: final interpolation estimate}
 Let $\rv\varphi, \rv{\alpha}$ be as in \cref{setting: slipop}, and suppose $\rv{\alpha}$ satisfies \cref{asm: slip operator taylor} uniformly with $C,\gamma_1,\gamma_3$ known. Assume the setting in \cref{cor: slip gen error} holds uniformly for all micro problems $\rv\micdombdry_{x}$ with $x\in \roughness_0$, and for all $\epsilon>0$ with $\eta_1 = \gamma_1/\gamma_2$ and $\eta_2 = 1/\gamma_2$, and $\gamma_2$ satisfying $\lim_{\epsilon\to 0}\gamma_1/\gamma_2=\lim_{\epsilon\to 0}\gamma_2/(\gamma_1\gamma_3)=0$. Let $\{x_n\}_{n=1}^N$ discretize the smooth boundary $\roughness_0$ with resolution $|\roughness_0|/N$, and define $\{\rv{\tilde\alpha}_n\}_{n=1}^N$ by 
 \[
     \rv{\tilde\alpha}_n := \frac{\inner{\oper[\rv\trone,\rv\micdombdry_{x_n}]}{h}_{\rv\micdombdry_{x_n}}}{\inner{\oper[\rv\trtwo,\rv\micdombdry_{x_n}]}{h}_{\rv\micdombdry_{x_n}}}.
 \]
 Then, for any weighted interpolant $\rv{\tilde\alpha}(z)=\mathcal{I}[(\rv{\tilde\alpha}_n)_{n=1}^N](z)$ that interpolates $\rv{\alpha}$ to the first order (\cref{def: interpolation}), there is a suitable choice of $N$ so that the interpolated slip amount is bounded in error over $\roughness_0$:
 \[
     \expectp{\sup_{z\in\roughness_0}\bigl|\rv{\tilde\alpha}(z) - \rv\alpha(z)\bigr|} 
     \leq 2\gamma_1C\sqrt{\delta|\roughness_0|\gamma_2/(\gamma_1\gamma_3)}.
 \]
\end{theorem}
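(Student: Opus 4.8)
The plan is to obtain this statement by composing the two preceding estimates: \cref{cor: slip gen error} controls the \emph{pointwise} expected slip error at a single micro domain, while \cref{thm: bounded homogenized slip error} converts a uniform pointwise bound into a bound on the interpolated slip over all of $\roughness_0$. So the whole argument reduces to feeding the conclusion of the former into the hypothesis of the latter and checking that the stated choices $\eta_1=\gamma_1/\gamma_2$, $\eta_2=1/\gamma_2$ make the two fit together.

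First I would fix an index $n$ and apply \cref{cor: slip gen error} to the micro domain $\rv\micdombdry_{x_n}$, recognising $\rv{\tilde\alpha}_n$ as the approximate slip ratio built from $\oper[\rv\trone,\rv\micdombdry_{x_n}]$ and $\oper[\rv\trtwo,\rv\micdombdry_{x_n}]$, and $\rv\alpha(x_n)$ as the exact slip ratio $\inner{\rv\rone}{h}/\inner{\rv\rtwo}{h}$ coming from \eqref{eq: slip avg}--\eqref{eq: slip avg adjoint} (up to the sign convention). The uniform $\eta_1$- and $\eta_2$-alignment of $\rv\rone$ and $\oper[\rv\trtwo,\rv\micdombdry_{x_n}]$ with $h$, and the training bound \eqref{eq: training bound} with parameter $\delta$, are in force by hypothesis, and the required a.s.\ bound on the slip ratio follows from $\alpha(x)\le\gamma_1 C$ in \cref{asm: slip operator taylor}, so the ``$C$'' appearing in \cref{cor: slip gen error} is here $\gamma_1 C$. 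Substituting $\eta_1=\gamma_1/\gamma_2$ and $\eta_2=1/\gamma_2$ gives $\eta_1^{-1}+\eta_2^{-1}=\gamma_2(1+\gamma_1)/\gamma_1$ and $\eta_1^{-1}\eta_2^{-1}=\gamma_2^2/\gamma_1$, so \cref{cor: slip gen error} yields, uniformly in $n$,
\[
  \expectp{\bigl|\rv{\tilde\alpha}_n-\rv\alpha(x_n)\bigr|}
  \;\le\; \gamma_1 C\Bigl(\delta\,\tfrac{\gamma_2(1+\gamma_1)}{\gamma_1}+\delta^2\tfrac{\gamma_2^2}{\gamma_1}\Bigr)
  \;=\; C\,\delta\gamma_2\bigl(1+\gamma_1+\delta\gamma_2\bigr).
\]
In the regime $\epsilon\to0$ the conditions $\gamma_1/\gamma_3\to0$ and $\gamma_1/\gamma_2\to0$ force $\gamma_1\to0$, and the $\delta^2$ contribution is genuinely lower order, so the correction factor $1+\gamma_1+\delta\gamma_2$ is $O(1)$ and the right-hand side is $\lesssim C\delta\gamma_2$ --- precisely the pointwise hypothesis required by \cref{thm: bounded homogenized slip error} with error parameter of order $\delta\gamma_2$.

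What then remains is a direct invocation of \cref{thm: bounded homogenized slip error}: $\rv\alpha$ satisfies \cref{asm: slip operator taylor} with the same $C,\gamma_1,\gamma_3$, the sample points have resolution $|\roughness_0|/N$ by hypothesis, and $\mathcal{I}$ interpolates $\rv\alpha$ to first order; choosing $N$ of the balancing order $\sqrt{|\roughness_0|\gamma_1/(\delta\gamma_2\gamma_3)}$ from the proof of that theorem then produces the claimed bound $2\gamma_1C\sqrt{\delta|\roughness_0|\gamma_2/(\gamma_1\gamma_3)}$.

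The main obstacle I anticipate is not conceptual but a matter of disciplined constant-tracking: one has to check that the four scaling requirements in play --- $\epsilon/\gamma_1\to0$, $\gamma_1/\gamma_3\to0$ (from \cref{asm: slip operator taylor}) and $\gamma_1/\gamma_2\to0$, $\gamma_2/(\gamma_1\gamma_3)\to0$ (imposed here) --- are simultaneously realisable, and that in that regime the factor $1+\gamma_1+\delta\gamma_2$ and the $\delta^2$ term are harmless, so that the constant $C$ may be passed from one line to the next in the ``absorbs all universal constants'' sense already used in the proof of \cref{thm: bounded homogenized slip error}. A secondary subtlety worth flagging is that, for $N$ large, the micro domains $\rv\micdombdry_{x_n}$ overlap heavily and are strongly correlated; this is side-stepped entirely by the standing assumption that \cref{cor: slip gen error} holds \emph{uniformly} for every $\rv\micdombdry_x$ --- a stationarity statement about the marginal law of one micro domain that requires nothing of the joint law, and is where the genuine modelling assumption is concentrated.
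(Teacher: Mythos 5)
Your proposal matches the paper's proof, which is literally a one-line combination of \cref{cor: slip gen error} (applied uniformly at each $x_n$ to obtain the pointwise hypothesis of order $\delta\gamma_2$) with \cref{thm: bounded homogenized slip error} and its balancing choice of $N$. Your constant-tracking of the factor $C(1+\gamma_1+\delta\gamma_2)$ and the remark about correlated, overlapping micro domains being handled by the uniform-marginal assumption are more explicit than anything in the paper, but the route is the same.
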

\begin{proof}
 The result follows from combining \cref{cor: slip gen error} and \cref{thm: bounded homogenized slip error}.
\end{proof}

With a bound on the expected error in supremum norm, standard sensitivity analysis for linear elliptic PDE produce an error in the macroscopic solution that is of the same order as the error in the slip coefficient: 

\begin{proposition}\label{prop: pde error bound}
    Let $u,v \in H^1(\dom_0,\Rn{2})$ solve the macro equations~\eqref{eq: macro_problem stokes}, with the same Dirichlet data $g\in C(\dombdry_0\setminus\roughness_0,\Rn{2})$ and slip functions $\alpha,\beta\in C(\roughness_0,\Reals)$, respectively. Assume $\alpha,\beta$ are both uniformly bounded from below by some $\alpha_0>0$ and from above by $1$. Then, there is a constant $C>0$ independent of $\alpha,\beta$ so that $u-v$ is bounded in the $H^1$-seminorm:
    \[
        \left|u - v\right|_{H^1(\dom_0,\Rn{2})}\leq \alpha_0^{-1}C\sup_{x\in\roughness_0}|\alpha(x)-\beta(x)|.
    \]
\end{proposition}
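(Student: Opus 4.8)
The plan is to run the argument entirely at the level of the weak (Navier–slip) formulation of \eqref{eq: macro_problem stokes} in the divergence-free test space, and to read the bound off a single energy identity for $w := u-v$. First I would extract the variational structure. Dotting the Robin condition with $\normal$ gives the essential condition $u\cdot\normal=0$ on $\roughness_0$ (together with $u=g$ on $\dombdry_0\setminus\roughness_0$ and $\nabla\cdot u=0$), while dotting with $\tangent$ turns the stress term into the natural condition $\tangent\cdot\partial_\normal u=-\alpha^{-1}(u\cdot\tangent)$. Integrating the momentum equation against a test field $\phi$ in
\[
 V_0 := \bigl\{\phi\in H^1(\dom_0,\Rn{2}):\ \nabla\cdot\phi=0,\ \phi=0\text{ on }\dombdry_0\setminus\roughness_0,\ \phi\cdot\normal=0\text{ on }\roughness_0\bigr\}
\]
makes the pressure term and all boundary contributions off $\roughness_0$ vanish, leaving
\[
 a_\alpha(u,\phi):=\int_{\dom_0}\!\nabla u:\nabla\phi + \int_{\roughness_0}\!\frac{1}{\alpha}(u\cdot\tangent)(\phi\cdot\tangent)\,\mathrm{d}S = 0\qquad\text{for all }\phi\in V_0,
\]
and likewise $a_\beta(v,\phi)=0$. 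Since $w$ has zero trace on $\dombdry_0\setminus\roughness_0$, vanishing normal component on $\roughness_0$, and is divergence-free, $w\in V_0$ and may be used as a test field.

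Next I would test both weak identities with $\phi=w$ and subtract. Writing $\alpha^{-1}(u\cdot\tangent)-\beta^{-1}(v\cdot\tangent)=\alpha^{-1}(w\cdot\tangent)+(\beta^{-1}-\alpha^{-1})(v\cdot\tangent)$ and using $\beta^{-1}-\alpha^{-1}=(\alpha-\beta)/(\alpha\beta)$ yields the energy identity
\[
 E := |w|_{H^1(\dom_0,\Rn{2})}^2 + \int_{\roughness_0}\!\frac{1}{\alpha}(w\cdot\tangent)^2\,\mathrm{d}S = \int_{\roughness_0}\!\frac{\alpha-\beta}{\alpha\beta}\,(v\cdot\tangent)(w\cdot\tangent)\,\mathrm{d}S.
\]
I would note in passing that the $H^1$-seminorm is a genuine norm on $V_0$ by Poincaré–Friedrichs (traces vanish on the positive-measure set $\dombdry_0\setminus\roughness_0$), which both underlies well-posedness of the macro problem and keeps the identity non-degenerate.

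The remaining ingredient is an a priori bound on $v$, uniform in $\beta\in[\alpha_0,1]$ and carrying the correct weight on $\roughness_0$. I would fix a divergence-free extension $G\in H^1(\dom_0,\Rn{2})$ with $G=g$ on $\dombdry_0\setminus\roughness_0$ and $G=0$ on \emph{all} of $\roughness_0$; such an extension exists since the solvability condition $\int_{\dombdry_0\setminus\roughness_0} g\cdot\normal=0$ holds (it is exactly the compatibility condition for the problem). Then $v_0:=v-G\in V_0$, and testing $a_\beta(v,\cdot)=0$ with $v_0$ kills the boundary contribution of $G$ (as $G\cdot\tangent=0$ on $\roughness_0$), giving
\[
 |v_0|_{H^1}^2 + \int_{\roughness_0}\!\frac{1}{\beta}(v_0\cdot\tangent)^2\,\mathrm{d}S = -\int_{\dom_0}\!\nabla G:\nabla v_0 \le |G|_{H^1}\,|v_0|_{H^1},
\]
hence $\int_{\roughness_0}\beta^{-1}(v\cdot\tangent)^2\,\mathrm{d}S\le |G|_{H^1}^2$ since $v\cdot\tangent=v_0\cdot\tangent$ on $\roughness_0$.

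Finally I would combine. Factor the right-hand side integrand of the energy identity as $(\alpha-\beta)\cdot(\alpha\beta)^{-1/2}\cdot\bigl(\alpha^{-1/2}(w\cdot\tangent)\bigr)\cdot\bigl(\beta^{-1/2}(v\cdot\tangent)\bigr)$, bound $|\alpha-\beta|\le \sup_{\roughness_0}|\alpha-\beta|$ and $(\alpha\beta)^{-1/2}\le\alpha_0^{-1}$, and apply Cauchy–Schwarz on $\roughness_0$; the two surviving $L^2$-factors are $\bigl(\int_{\roughness_0}\alpha^{-1}(w\cdot\tangent)^2\bigr)^{1/2}\le E^{1/2}$ and $\bigl(\int_{\roughness_0}\beta^{-1}(v\cdot\tangent)^2\bigr)^{1/2}\le|G|_{H^1}$. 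This gives $E\le \alpha_0^{-1}\,|G|_{H^1}\,\bigl(\sup_{\roughness_0}|\alpha-\beta|\bigr)\,E^{1/2}$, so $E^{1/2}\le\alpha_0^{-1}|G|_{H^1}\sup_{\roughness_0}|\alpha-\beta|$, and since $|w|_{H^1}^2\le E$ the claim follows with $C=|G|_{H^1}$, depending only on $\dom_0$ and $g$. The main obstacle is getting the sharp power $\alpha_0^{-1}$ rather than $\alpha_0^{-2}$: a naive estimate bounds $|\alpha-\beta|/(\alpha\beta)$ by $\alpha_0^{-2}\sup|\alpha-\beta|$ and controls $v\cdot\tangent$ on $\roughness_0$ through the trace theorem, losing a power; the remedy is to split $(\alpha\beta)^{-1}$ symmetrically and absorb one half into the left-hand side of the energy identity and the other into the \emph{weighted} a priori estimate on $v$ that becomes available precisely because $G$ was chosen to vanish on the rough wall.
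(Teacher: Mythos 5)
Your proof is correct, and it rests on the same two pillars as the paper's argument: an energy identity for $w=u-v$ whose boundary term carries the weight $\alpha^{-1/2}$, and a uniform-in-$\beta$ a priori bound on the tangential trace of $v$ obtained through the same divergence-free lifting $G$ that vanishes on $\roughness_0$ (this is exactly what delivers the sharp power $\alpha_0^{-1}$). Where you differ is in execution: you work entirely in the weak formulation on the divergence-free test space $V_0$, so the pressure and all normal-derivative boundary terms vanish at the outset; you only need $G\in H^1$, whereas the paper takes $G\in C^2$ and manipulates $\Delta G$ and $\partial_\normal G$ in strong form; and you never invoke the paper's completion-of-squares Claim -- you keep the full energy $E$ on the left, bound the right-hand side by $\alpha_0^{-1}\,|G|_{H^1}\,\sup_{\roughness_0}|\alpha-\beta|\,E^{1/2}$, and divide. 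Your route through $v\cdot\tangent/\beta$ is equivalent to the paper's route through $\partial_\normal v\cdot\tangent$ via the slip condition $\partial_\normal v\cdot\tangent=-(v\cdot\tangent)/\beta$, so the two energy identities coincide up to sign conventions; a small bonus of your version is that it never uses the upper bound $\alpha,\beta\le 1$ (the paper needs it when estimating $\sqrt{\beta}\,\partial_\normal G$) and it produces the explicit constant $C=|G|_{H^1}$. The one point you should state rather than only assert is the existence of the divergence-free $H^1$ extension with $G=g$ on $\dombdry_0\setminus\roughness_0$ and $G=0$ on $\roughness_0$, which requires the compatibility condition $\int_{\dombdry_0\setminus\roughness_0} g\cdot\normal=0$ -- the same implicit assumption the paper makes when it introduces its extension.
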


\begin{proof}
    See \cref{sec: appendix: pde bounds}.
\end{proof}

Taking the expectation on both sides, we then obtain a statistical bound on the macroscopic error.

\section{Numerical Results}\label{sec: experiments}
In the following sections, we outline the data generation and training method used for the \gls{FNO-HMM} solver, and evaluate its performance compared to a \gls{BIE-HMM}. We distinguish between different types of errors. 
Namely, let 
\begin{align*}
 \sobspace^1(\dom_0)^2 \ni \tuhmm
 &= \text{\gls{FNO-HMM} solution,}
 \\
 \sobspace^1(\dom_0)^2 \ni \uhmm
 &= \text{\gls{BIE-HMM} solution with accurate micro solver (error $\ll \epsilon$)}
 \\
 \sobspace^1(\dom_\epsilon)^2 \ni \macu_\epsilon
 &= \text{true solution to the multiscale flow problem~\eqref{eq: stokes_mom_eps}.}
\end{align*}
Furthermore, let $\{(s_1^n,s_2^n)\}_{n=1}^N$ be the \gls{FNO-HMM} estimates of the Riesz representors over the $N$ micro problems, and let $\{(\rone^n,\rtwo^n)\}_{n=1}^N$ denote the true Riesz representors, with micro domains $\{\micdom^n\}_{n=1}^N$ and $\rone^n,\rtwo^n,s_1^n,s_2^n$ all in $\contspace(\micdombdry^n, \Rn{2})$. We define the \emph{Normalized $\Lp^2$-error} $\nrmse{\uvec}{\vvec}{M}$ of $\uvec$ relative to $\vvec$ over an arbitrary compact set $M\subset \overline{\dom}_0$ as follows:
\begin{equation}\label{eq: normalized error}
 \eletter(\uvec, \vvec; M) := \frac{\|\uvec-\vvec\|_M}{\|\vvec\|_M},
\end{equation}
where $\|\cdot\|_M$ is the $\Lp^2$-norm over a measurable set $M$. Using the above expression, we define the \emph{model error} $\emdl$, \emph{\cpl error} $\ecpl$, \emph{total error} $\etot$, \emph{smoothing after-error} $\elo$ and \emph{smoothing before-error} $\ehi$ as references for lower and upper bounds on the error:
 as follows:
\begin{xalignat}{2}
\emdl(M) &:= \nrmse{\uhmm}{\macu_\epsilon}{M},&\ecpl(M) &:= \nrmse{\tuhmm}{\uhmm}{M}, \nonumber \\
\elo(M) &:= \nrmse{\phi_\epsilon\circledast\macu_\epsilon}{\macu_\epsilon}{M},&\ehi(M) &:= \nrmse{\macu_0}{\macu_\epsilon}{M},\label{eq: hi}\\
\etot(M) &:= \nrmse{\tuhmm}{\macu_\epsilon}{M}.&&\nonumber
\end{xalignat}
Here, $\phi_\epsilon$ is a low-pass filter with a cutoff at a wavelength of $\mathcal{O}(\epsilon)$, and $\macu_0$ is the naive solution obtained by solving Stokes equations~\eqref{eq: macro_problem stokes} with no-slip imposed at the wall~$\roughness_0$ instead of the Navier slip condition. The set $M$ will usually be a \emph{level set} of the wall-distance function, consisting of points a distance $d$ from the macroscopic boundary $\roughness_0$:
\begin{equation}\label{eq: off-wall eval set}
 M_\delta = \left\{\point\in\dom_0\;\mid\; \dist(\point,\roughness_0)= \delta\right\}, \quad \text{where}\quad \dist(\point, \roughness_0) := \min_{\ypoint\in\roughness_0}\{\|\point-\ypoint\|\}.
\end{equation}
Usually, we look at $\delta$ in $(0, 20 \epsilon)$. Lastly, we define the $\emph{micro error}$ $\emic$ as the maximal error in the Riesz representors:
\begin{equation}\label{eq: micro error}
 \emic := \max_{n\in\{1,2,\dots,N\}}\bigl\{\nrmse{s_1^n}{\rone^n}{\micdombdry^n}+\nrmse{s_2^n}{\rtwo^n}{\micdombdry^n}\bigr\}.
\end{equation}

\subsection{Data Generation}
We generate training data in three steps. First, the micro domain $\rv\micdom$ is defined similarly to the unit cell used in homogenization:
\begin{equation}
 \rv\micdom := \smoothop\Bigl(\bigl\{\point\in\Rn{2}\;\mid\; 0\leq \pointi_1\leq 1,\; \gpfunc(\pointi_1)\leq \pointi_2\leq \rvmicdomh\bigr\}\Bigr)\label{eq: domain generation}
\end{equation}
where $\gpfunc\colon\Reals \to \Reals$ is a 1D \emph{Gaussian process} with \emph{exponential kernel} that simulates a rough wall, $\rvmicdomh>\|\gpfunc\|_\infty$ is a random height and $\smoothop$ is a \emph{smoothing operator} that rounds the corners of $\rv\micdom$ so that the maximal curvature of $\gpfunc$ is also the maximal curvature of $\rv\micdom$.  The smoothing operator ensures that no unnecessary high frequency content is introduced to the micro problem apart from the roughness. From $\rv\micdom$ we approximate the corresponding Jordan curve $\rv\diffeo\in\diffeospace$ by uniform discretization. In particular, we use in total eight polynomials of degree 5 (2 coordinates times 4 corners) to ensure that $\rv\diffeo$ is twice differentiable everywhere.

Second, we generate a random line segment $\rv\lineseg = \{\point\in\Rn{2}\mid 0\leq x_1\leq 1,\; x_2=\rvlinesegh\}$ that passes through $\micdom$ at some random height $\|\gpfunc\|_\infty < \rvlinesegh<\rvmicdomh$. ~\cref{fig: micro domain examples} shows samples from $\rv\micdom,\rv\lineseg$ that were generated from this process. The domain $\rv\micdom$ and line segment $\rv\lineseg$ define intermediate representors $\rv\trone,\rv\trtwo$ as described in \cref{sec: dimensionality reduction}, from which we compute Riesz representors $\rv\rone,\rv\rtwo$ using the boundary integral formulation~\eqref{eq: rone} and~\eqref{eq: rtwo}. 

\begin{figure}[!ht]
 \includegraphics[width=\textwidth]{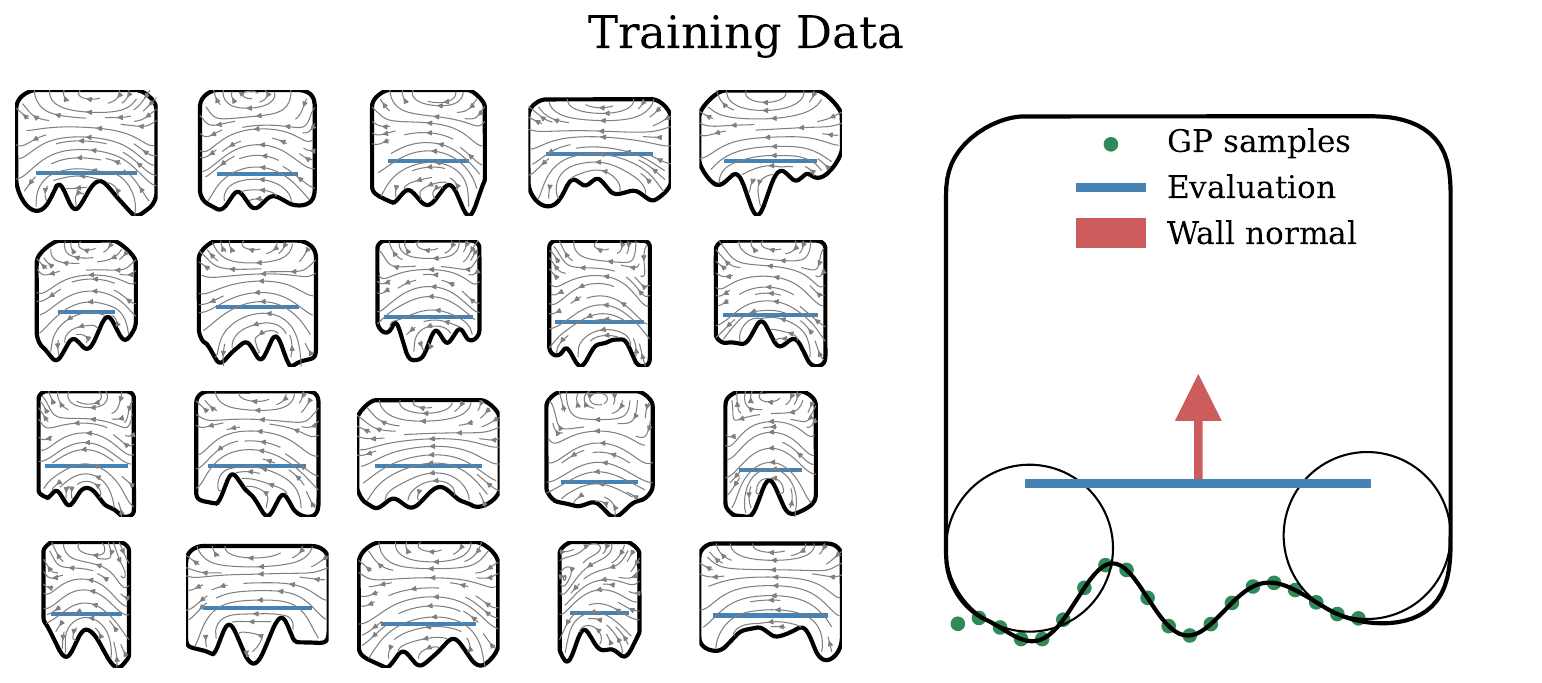}
 \caption{Examples of randomly generated problem data. The line $\lineseg$ is shown in blue. The green points are taken from a Gaussian Process, and the circles indicate the curvature bound on the connecting corners. The stream plots shows the flow field that would induce highest mean flow across the blue line.}
 \label{fig: micro domain examples}
\end{figure}

\subsection{Training}
We sampled a dataset of $K=20\;000$ micro domains $\micdom_k$ and line segments $\lineseg_k$, and computed the corresponding discretized intermediate representors $\tilde\rvec_{1}^{k},\tilde\rvec_{2}^k \in \Rn{2M}$ using \cref{prop: intermediate representors} and Riesz representors $\rvec_{1}^{k},\rvec_{2}^{k}\in\Rn{2M}$ for all $k=1,2,\dots,K$ using \gls{BIE} with a GMRES solver, with $J$ large enough to be accurate. We then trained the geo-FNO to minimize empirical loss~\eqref{eq: discrete learning} with the Adam optimiser~\cite{kingma2017adam} and a learning rate of $10^{-3}$ and no explicit regularization. We used a batch size of 32, and trained for $40\;000$ epochs on an NVIDIA GeForce RTX 3090, which took around 15 minutes per run. Due to statistical uncertainty involved in the training process, the learned operator is a \emph{random variable} that depends on the training data. We trained $\nretrain$ times with different parameter initializations and different random seeds for ADAM, to estimate the sensitivity to training and initializations. 

\subsection{Evaluation}
The training was run on a single NVIDIA GeForce RTX 3090 with 24 GB memory. Most of the evaluation tasks were carried out on an Intel Xeon 4215R 3.2GHz 2400MHz 8C CPU, to keep the comparisons between the FNO and BIM-based precomputation as fair as possible. 

\begin{figure}[!ht]
 \includegraphics[width=\textwidth]{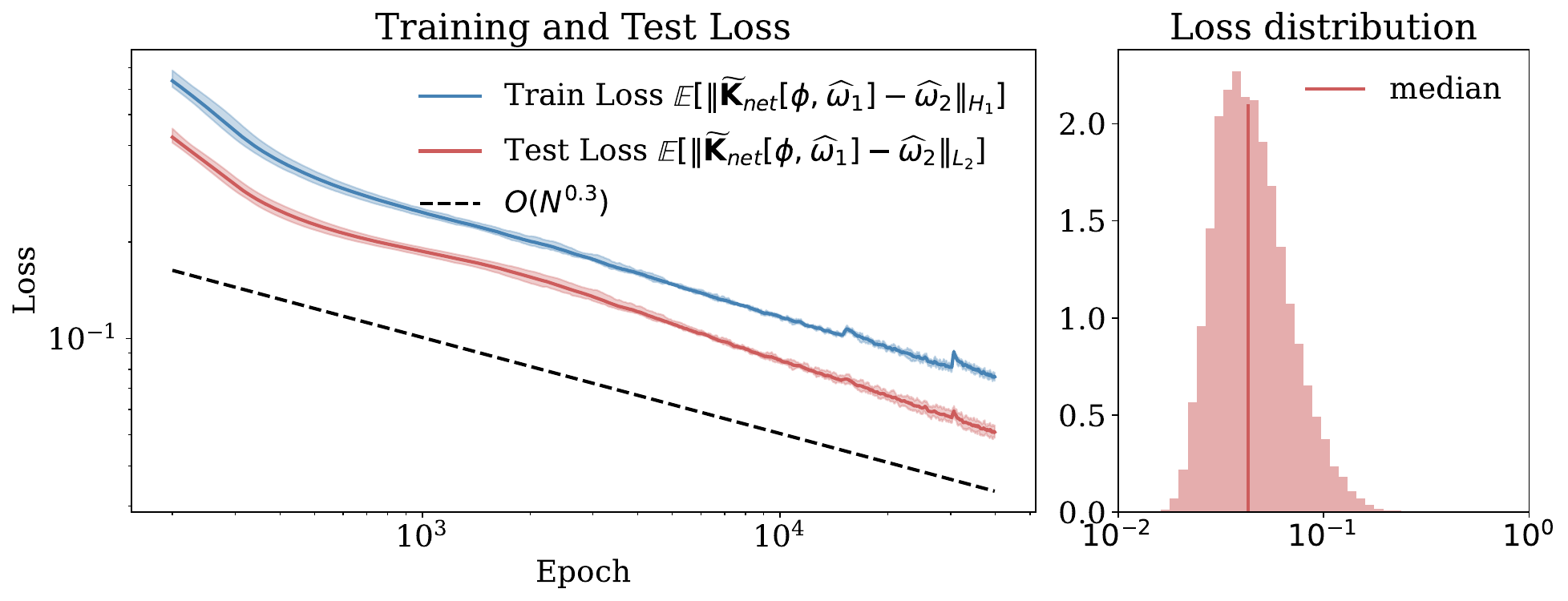}
 \caption{The left plot shows training and test loss \eqref{eq: discrete learning} for $\nretrain$ training runs with $2\cdot 10^5$ data samples and $40 000$ epochs. The full range of losses is marked with a shaded area. The right plot shows the error distribution (the distribution of the terms in~\eqref{eq: discrete learning} before summation) on the test set at the final training step.}
\end{figure}

We evaluate the learned operator on three test cases. First, we test accuracy and speed of the learned Riesz representors on different micro domains. Then, we implement the full \gls{HMM} method on a \emph{periodic channel} with a rough wall at the bottom and flat wall at the top, whose roughness is a sine function with wavelength $\epsilon$ and amplitude $\epsilon$. Third, we run \gls{HMM} on a more complicated domain (\cref{fig: circular inset}) with \emph{circular insets} whose walls are generated by a Gaussian random field that dictates the offset from the circle in the outwards normal direction. In both cases, the flow is driven by \emph{tangental flow conditions} at the smooth boundaries, and non-slip at the rough wall. For the two latter cases, we compare the learned operator to a full resolution solver, and a naive solver that smooths the roughness and solves the problem on the smoothed domain with no-slip boundary conditions. Lastly we demonstrate the capacity of \gls{FNO-HMM} on a hard problem for which running a full solver is too costly on our system.

\subsubsection{Case I: Random Domains}
\cref{fig: micro solver generalizes}a--c show prediction ranges of 6 networks trained with different seeds for parameter initialization and batch randomization but the same training data set. In (a), a domain that resembles the training data, (b), domains with higher curvature than the data set and (c), domains where the wall cannot be parameterized as normal displacement of the smooth wall, also not in the training data. We distinguish between \emph{model uncertainty} (the prediction range), and \emph{bias} (systematic error over all predictions). The model uncertainty in case (b) is comparable to (a), but the bias is considerably higher in the high curvature region $[0,\pi/2]$. This is likely due to the fact that the training data is generated with a fixed curvature bound, and the model has not learned to extrapolate to higher curvatures. In addition to increased bias, (c) also results in higher model uncertainty compared to (a).

\begin{wrapfigure}{R}{0.5\textwidth}
\centering
\includegraphics[width=\linewidth]{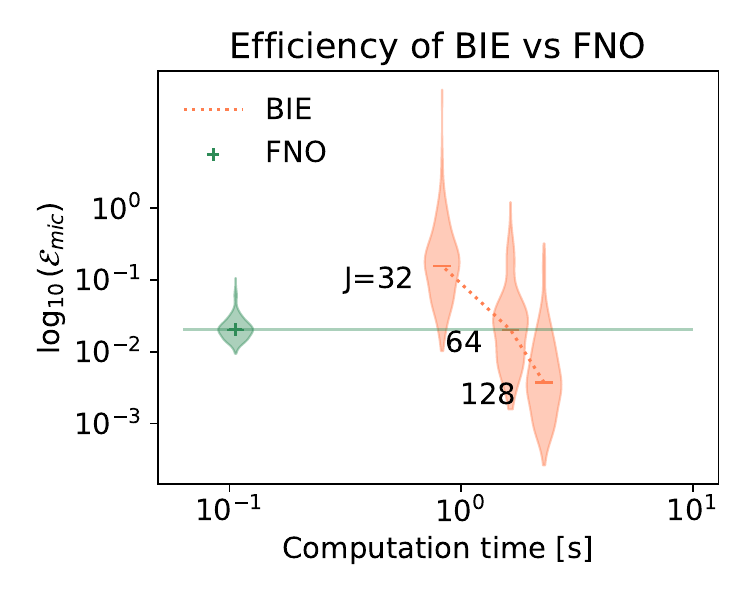}
\captionsetup{width=\linewidth}
\caption{Distribution of the logarithm of the error $\emic$ \eqref{eq: micro error} over $K=200$ training data, as a function of GPU time for the classical \gls{BIE} method with $J\in\{32,64,128\}$ (orange), and \gls{FNO} (green).}
\label{fig: fno vs gmres time}    
\end{wrapfigure}

\begin{table}
    \centering
     \begin{tabular}{l|c c c c c}
         \hline
         Solver &  $J=256$ & $J=128$ & $J=64$ & $J=32$ & FNO\\
         \hline
          Time &  $2.90\pm0.82$ & $1.38\pm0.43$ & $0.70\pm0.19$ &$0.42\pm0.15$ & $0.25\pm0.09$ \\
         \hline
     \end{tabular}
     \caption{Time measurements on the CPU for FNO at $J=256$ discretisation points and BIE at $J\in\{32,64,128,256\}$. }\label{tab: data from experiments}
\end{table}
    
\cref{fig: fno vs gmres time} compares the inference time for the learned \gls{FNO} solvers to a \gls{BIE} scheme that employs GMRES iterations. The GMRES implementation computes matrix-vector products with a discretized boundary integral operator stored as a dense matrix. \cref{tab: data from experiments} similarly lists run times and fraction of the total \gls{HMM} solve dedicated to the micro solvers for Case II (\cref{sec: case II}) on CPU. The learned approach is around 3 times faster than GMRES on systems ranging from 32 to 64 discretization points (20 times on GPU), and achieves the same accuracy as a 64 point discretization on samples not in the training data (but drawn from the same distribution). We note that the error distribution is significantly tighter for the learned method, visualized by the length of the violin plots of \cref{fig: fno vs gmres time}. 

\begin{figure}[!ht]
 \centering
 \scalebox{0.97}{
 \begin{tikzpicture}
 \node[anchor=north west] at (0,0)
 {\includegraphics[width=\textwidth]{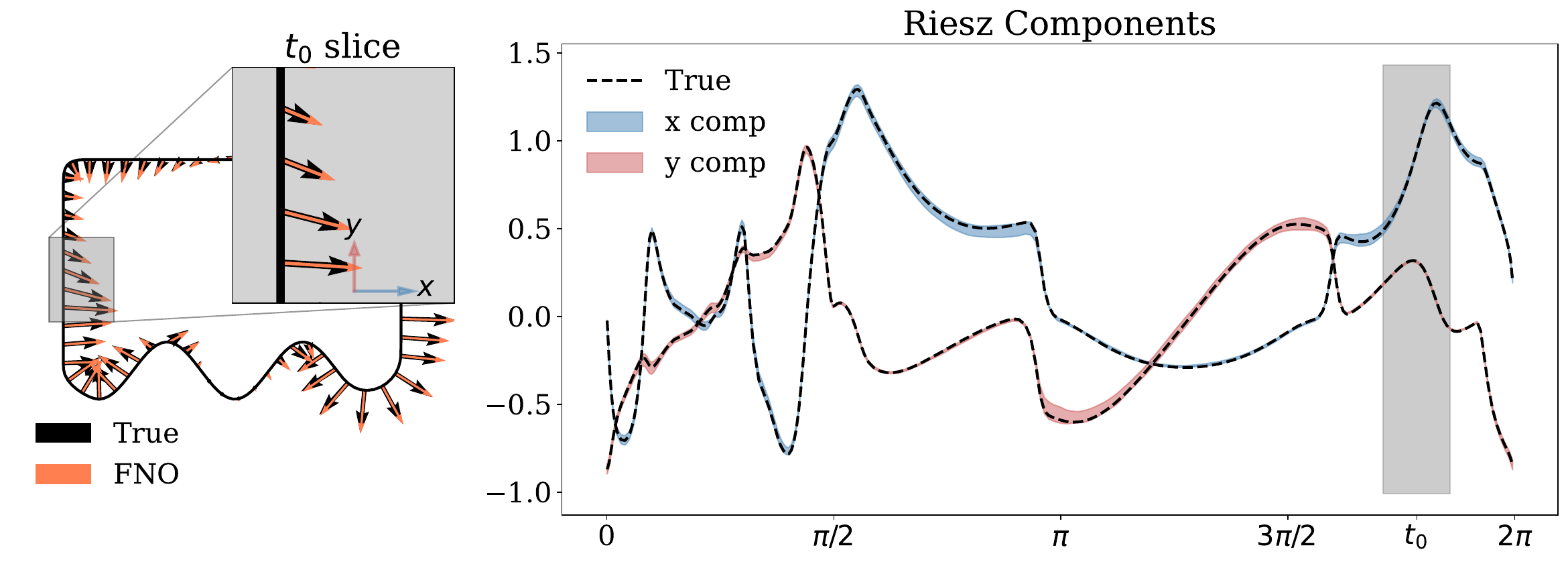}};
 \node[anchor=north west] at (0,-5)
 {\includegraphics[width=\textwidth]{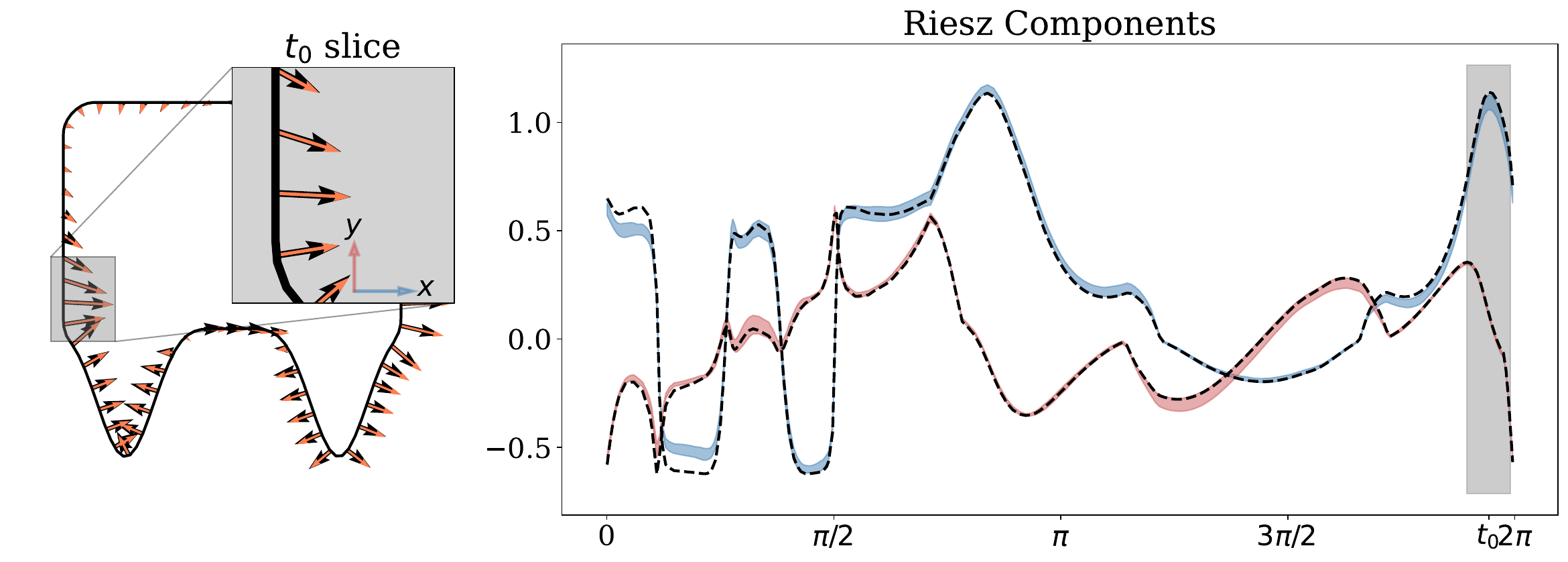}};
 \node[anchor=north west] at (0, -10)
 {\includegraphics[width=\textwidth]{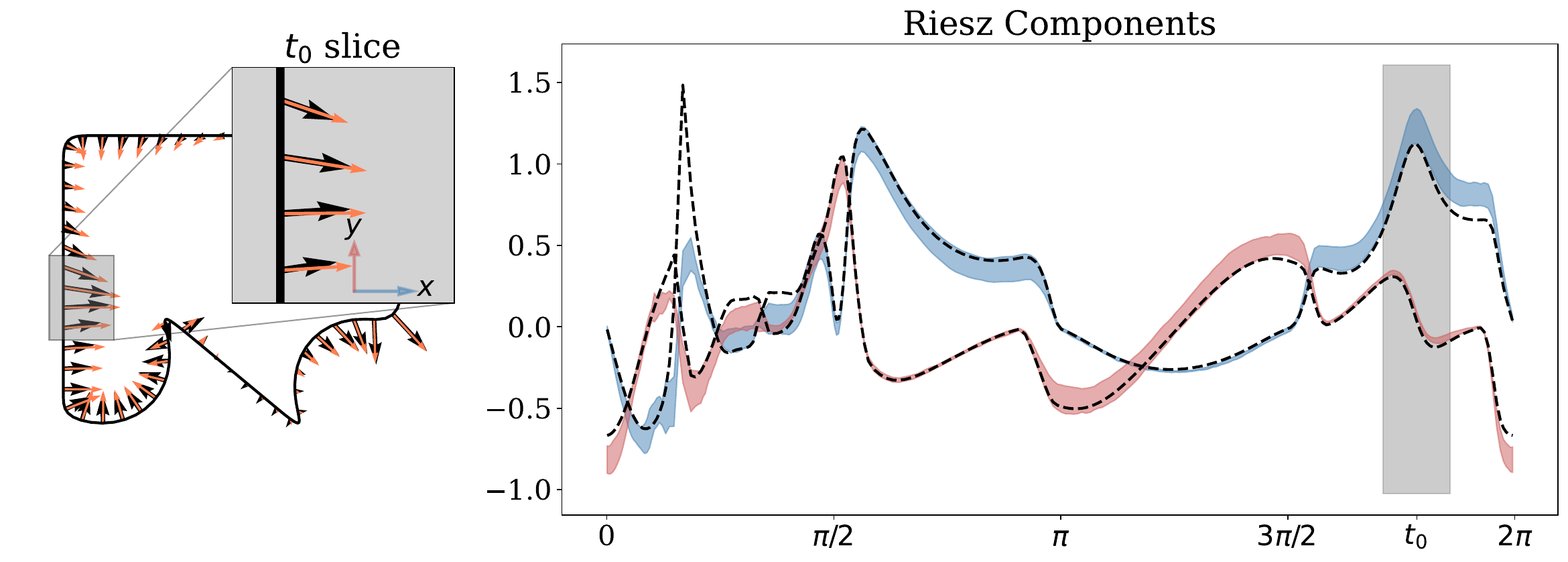}};

 \node[below, right,yshift=-1.5em] at (0,0) {\scalebox{1.5}{(a)}};
 \node[below, right,yshift=-1.5em] at (0,-5) {\scalebox{1.5}{(b)}};
 \node[below, right,yshift=-1.5em] at (0,-10) {\scalebox{1.5}{(c)}};
 \end{tikzpicture}}
 \caption{Case I, Prediction ranges from 6 independently trained networks on three test domains. Case $(a)$ has curvature and aspect ratio that resembles the training data, whereas $(b)$ and $(c)$ are \emph{outside} the training distribution. The colored region indicates the full range (max to min) of the predictions. The shaded grey areas indicates a segment of the parameter space that the left figure zooms in on.}\label{fig: micro solver generalizes}
\end{figure}
\subsubsection{Case II: Periodic Channel}\label{sec: case II}
We first demonstrate the relation between the error in the macro solution and the micro solution by studying a simple domain, consisting of a periodic channel with a roughness on the lower boundary. We define
\[
 \dom_\epsilon := \Bigl\{\point=(\pointi_1,\pointi_2)\mid\;0< \pointi_1< \perL,\;\epsilon\bigl(2-\sin(2\pi\tfrac{\pointi_1}{\epsilon})\bigr)< \pointi_2 < 1 \Bigr\},
\]
with a periodic boundary at $\pointi_1 = 0$ and $\pointi_1=\perL$. The boundary condition is posed on the top boundary $\pointi_2 = 1$ and takes the expression $\bdryvel(\pointi_1, 1) = (2 + \sin (2\pi \pointi_2))e_1$ with $e_1 = (1, 0)$ the unit vector in the $\pointi_1$-direction.

The macro solver is an \emph{iterative spectral method} that uses a tensor basis with \emph{Chebyshev polynomials} in the $\pointi_2$-direction and a \emph{Fourier basis} in the periodic $\pointi_1$-direction. We use a tensor grid with $21$ discretization points in each direction to represent the macro solution, which is enough to make the model error dominant in our case. The \emph{extrapolation operator} uses 5:th degree polynomials $p(t)$ on the parameterisation variable $t$. The polynomial is fit to simultaneously meet the consistency criteria~\eqref{item_cont}--\eqref{item: macro_agreement} in \cref{sec: reconstruction operator}, and minimize the norm $\|\tfrac{\mathrm{d}}{\mathrm{d}t} p\|^2$. The resulting linear system is small and can be solved efficiently using Gaussian elimination. The \emph{interpolation operator} uses a Fourier basis with $13$ micro problems placed on uniform collocation points (\cref{fig: interpolation error} indicates that 10--20 micro problems is optimal), which enables the use of FFT to obtain the slip function. The optimal number of micro problems was determined from a small parameter study with a high accuracy micro solver.

\cref{fig: interpolation error} shows the interpolation error as defined in \cref{thm: bounded homogenized slip error} for Case II, as a function of the number of micro problems $N$ along the wall. The figure indicates the same asymptotics $\mathcal{O}(N^{-1} + N)$ as the proof of \cref{thm: bounded homogenized slip error} predicts. The asymptotics did not appear as clearly for all problem setups.

\Cref{fig: coupling error}a--f shows the \cpl error~\eqref{eq: hi} normalized by the smoothing-before error $\ehi$ as a function of the micro error~\eqref{eq: micro error} for six configurations of \emph{aspect ratio}, \emph{width}~\eqref{eq: domain generation} and \emph{offset} of the evaluation line. The error is approximately independent of $\epsilon$ due to scale invariance in the micro solver, and depends approximately linearly on the micro error. Furthermore, the \cpl error of \gls{FNO-HMM} is comparable to a 64-point discretization of \gls{BIE-HMM} in configurations (a) and (b), slightly worse in (c), (d) and (f), and matches a 32-point discretization in (e). The cases (a),(c) and (d) all have a smaller margin between the \cpl $\ecpl$ and model error $\emdl$, meaning that the $\cpl$ contributes more to the total error $\etot$ (since it is bounded above by $\emdl+\ecpl$). These setups are likely harder since the evaluation line is close to the boundary, causing a near-singularity in the boundary integral formulation. We therefore avoid placing the evaluation line too close to the boundary in Case III and IV.

\begin{wrapfigure}{R}{0.5\textwidth}
\centering
\includegraphics[width=\linewidth]{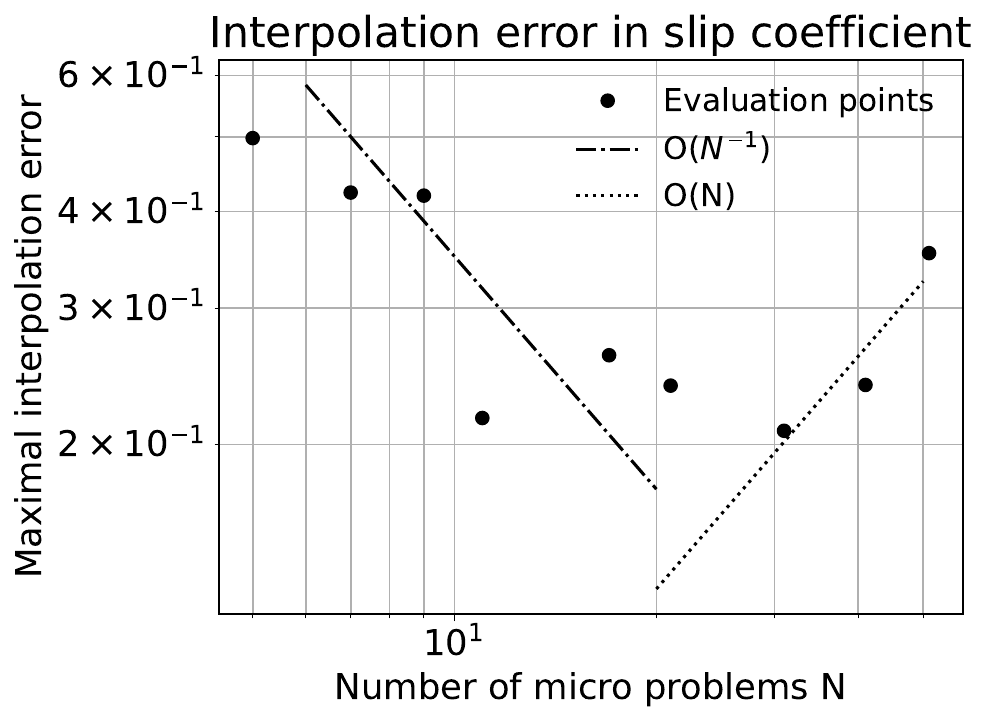}
\captionsetup{width=\linewidth}
\caption{Interpolation error in the slip coefficient on a periodic channel domain with random roughness, compared to the rates predicted in the proof of \cref{thm: final interpolation estimate}.}
\label{fig: interpolation error}
\end{wrapfigure}

Configuration (e) is likely harder for \gls{FNO-HMM} due to a combination of high aspect ratio and small width, which is underrepresented in the training data. In all cases, the \cpl error of the \gls{FNO-HMM} is between one and two orders of magnitude lower than the model error~\eqref{eq: hi}, meaning that it makes up for around 10\% of the total error~\eqref{eq: hi}. \Gls{FNO-HMM} and \gls{BIE-HMM} both stay within the $\epsilon$-scale fluctuations of the true solution in \cref{fig: pipe}b.

\begin{figure}[!htp]
 \centering
 \begin{tikzpicture}
    \node[anchor=north west] at (0,0) {\includegraphics[width=0.95\textwidth]{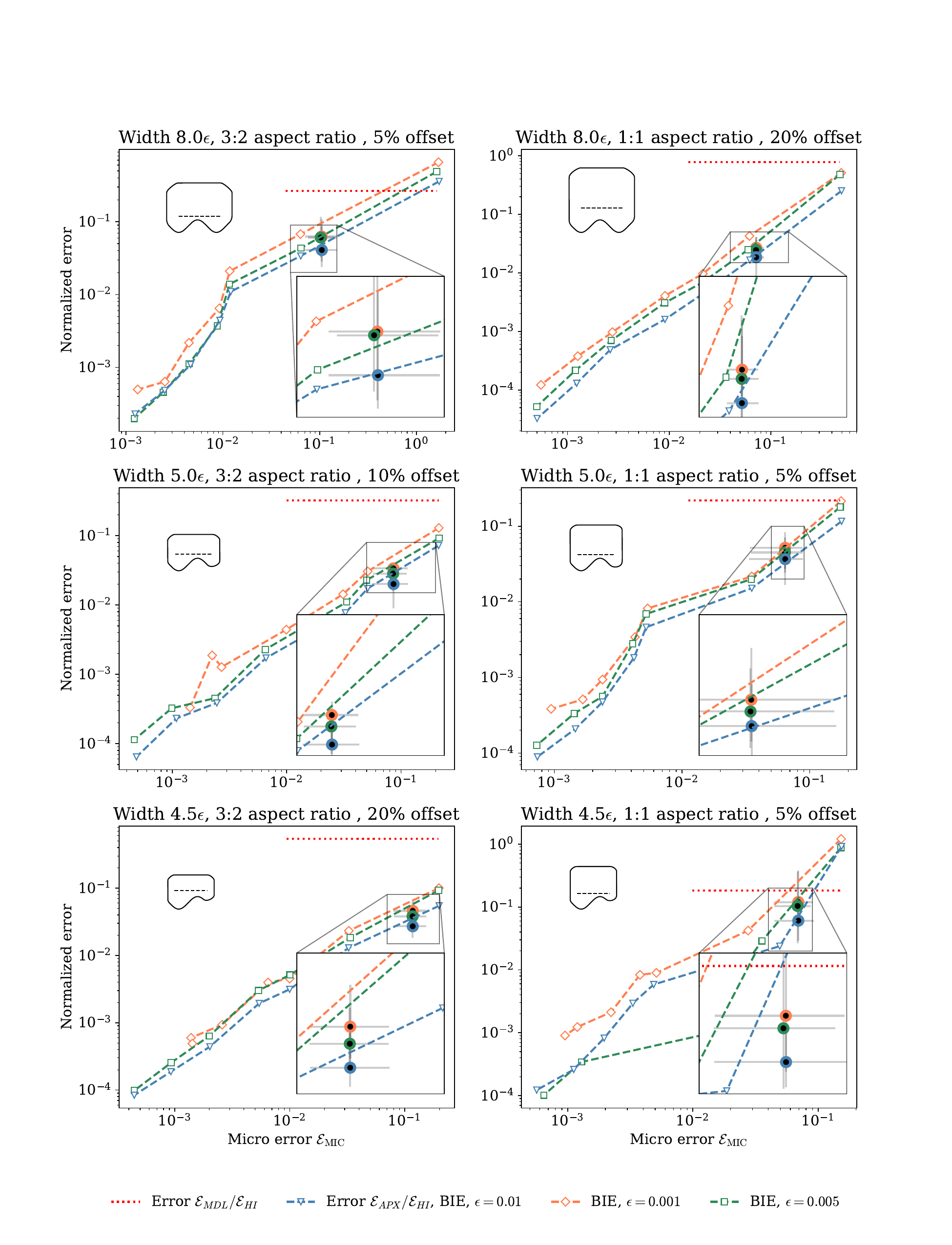}
};
 \foreach\x\y\lab in {0/0/a,1/0/b,0/1/c,1/1/d,0/2/e,1/2/f}{
    \node at (2+5.2*\x, -2.4-4.4*\y) {(\lab)};
 }
 \end{tikzpicture}
 \caption{Normalized \cpl error $\ecpl/\ehi$~\eqref{eq: hi} of \gls{FNO-HMM} in case II (black points) with 100\% confidence intervals over 5 training runs are marked with the grey crosses -- evaluated on the line $L_{2\epsilon}=\{(x,y), y=2\epsilon,x\in(-1,1)\}$ as a function of micro error $\emic$~\eqref{eq: micro error}. For reference, we show line plots of normalized \cpl error $\nrmse{\tilde u_J}{u_{\mathrm{HMM}}}{L_{2\epsilon}}/\ehi$ where $\tilde u_J$ is the \gls{BIE-HMM} solver with varying resolution $J$. Different colors for $\epsilon\in \{0.01,0.04,0.1\}$ and different width, aspect ratio and evaluation offset in the micro domains. The learned solver is highlighted in the zoomed boxes, and the total normalized error $\etot(L_{2\epsilon})/\ehi$ is marked as a red dashed line.}\label{fig: coupling error}
\end{figure}

As \cref{fig: pipe} shows, \gls{FNO-HMM} obtains an order of magnitude improvement over the smoothing before-error~\eqref{eq: hi}, and performs comparably to a fully resolved \gls{BIE-HMM}. Close to the boundary, the total error for \gls{FNO-HMM} is even comparable to the smoothing-after error~\eqref{eq: hi}. The smoothing-after error then decays quickly away from the boundary, whereas the errors from both \gls{HMM}-based methods decay more slowly.

\begin{figure}[!ht]
 \captionsetup[subfigure]{justification=centering}
 \begin{subfigure}{0.2\textwidth}
    \includegraphics[width=\textwidth]{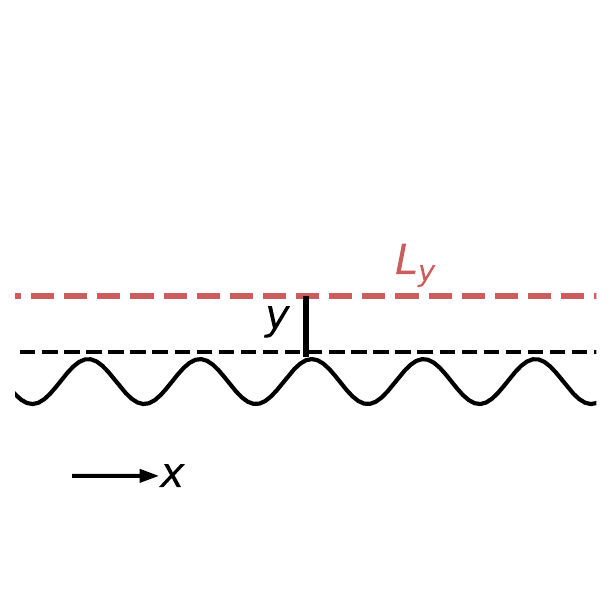}
 \end{subfigure}%
 \begin{subfigure}{0.8\textwidth}
    \includegraphics[width=\textwidth]{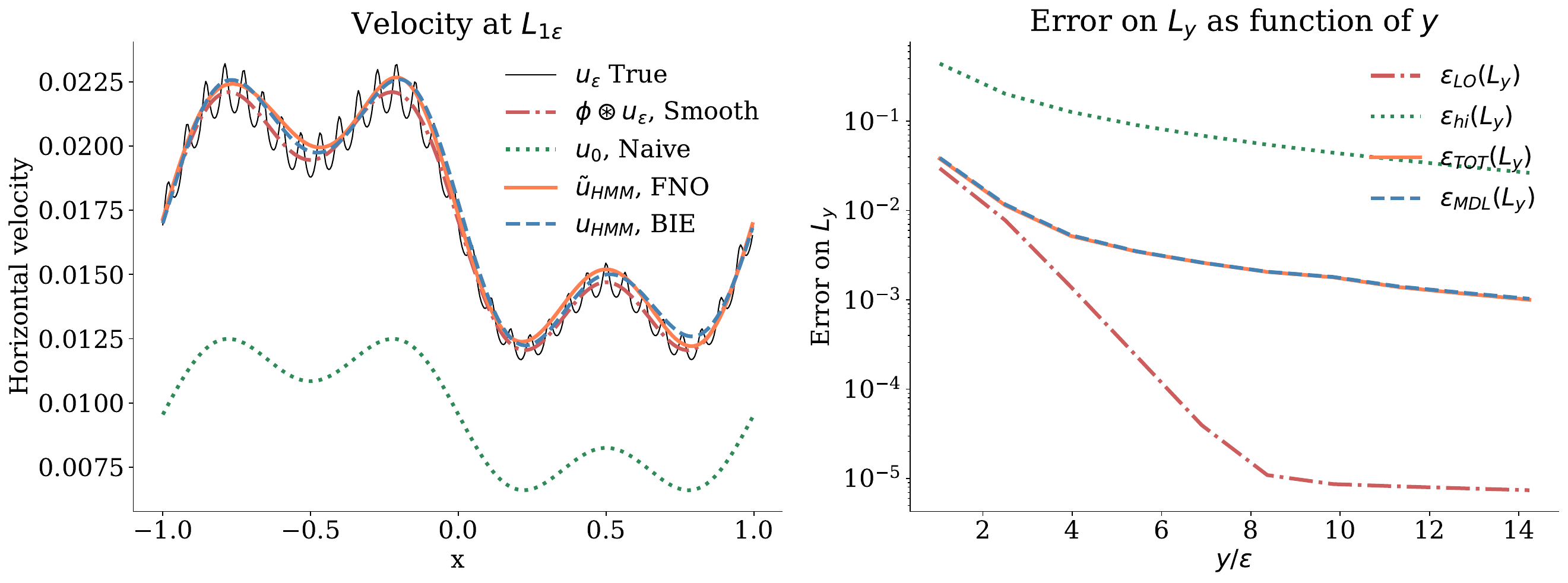}
 \end{subfigure}\\
 \captionsetup[subfigure]{justification=centering}
 \begin{subfigure}{0.2\textwidth}
    \includegraphics[width=\textwidth]{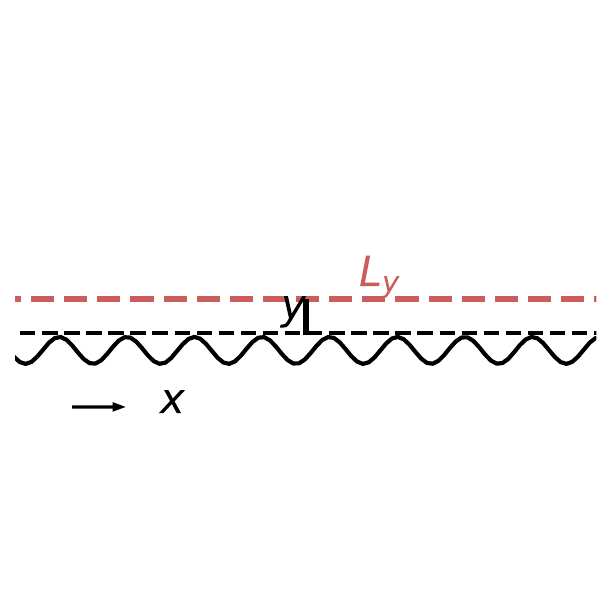}
 \end{subfigure}%
 \begin{subfigure}{0.8\textwidth}
    \includegraphics[width=\textwidth]{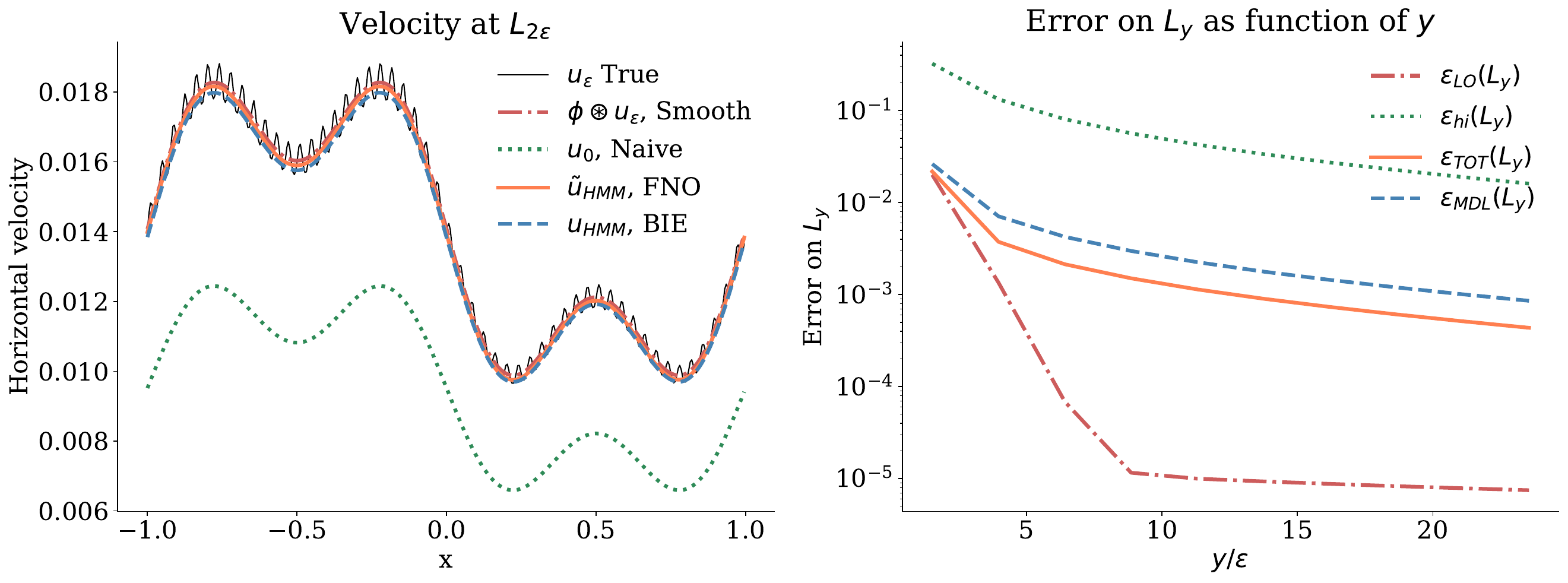}
 \end{subfigure}
 \caption{Case II, Periodic domain with $\epsilon=0.008$ (bottom) and $0.013$ (top). Evaluation line $L_y$ (left), horizontal velocity over the line $L_{2\epsilon}$ (middle) and errors $\etot$, $\emdl$, $\elo$ and $\ehi$ over $L_y$ as function of $y$ (right).}\label{fig: pipe}
\end{figure}
\begin{figure}[!ht]
 \captionsetup[subfigure]{justification=centering}
 \begin{subfigure}{0.2\textwidth}
     \includegraphics[width=\textwidth]{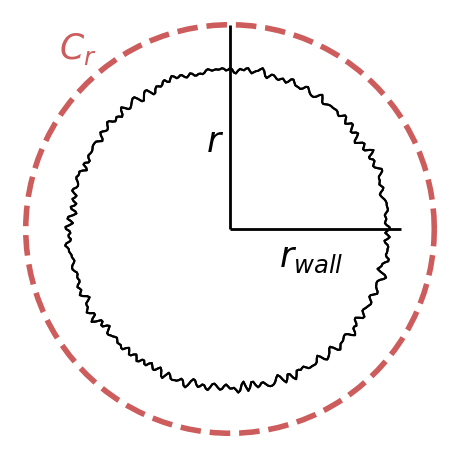}
 \end{subfigure}%
 \begin{subfigure}{0.8\textwidth}
     \includegraphics[width=\textwidth]{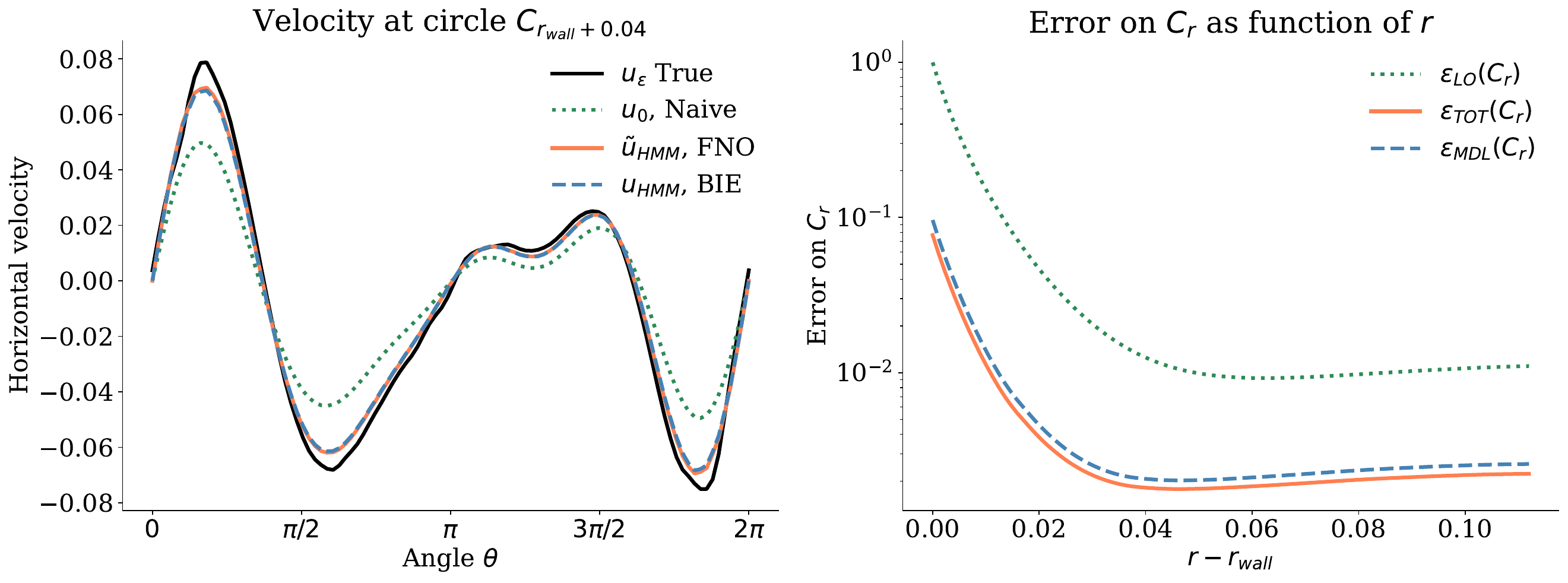}
 \end{subfigure}
 \caption{Case III, Error plots for the circular inset domain with $\epsilon=0.01$. Evaluation circle $C_r$ (left), horizontal velocity over $C_{r_{wall}+0.04}$ (middle) and errors $\etot$, $\emdl$, $\ehi$ over $C_r$ as function of $r-r_{wall}$ (right).}\label{fig: circular inset slice}
\end{figure}
    
\subsubsection{Case III: Circular Insets}
\begin{figure}[!ht]
 \centering
 \begin{subfigure}{\textwidth}
 \includegraphics[width=0.95\textwidth]{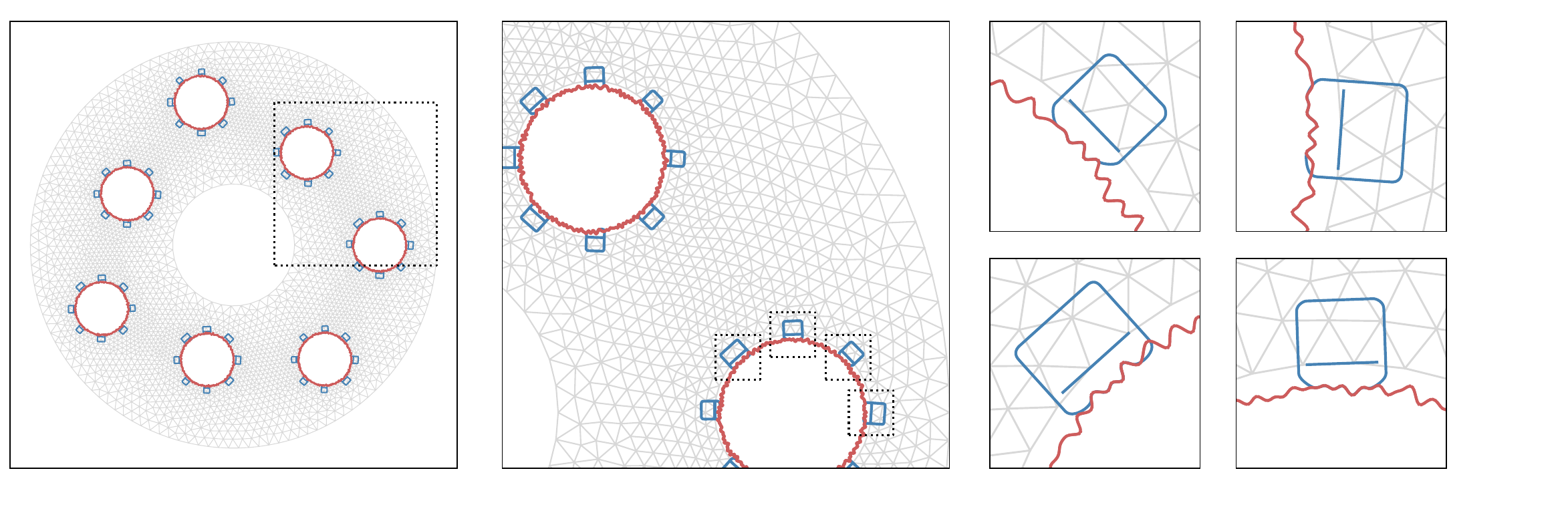}
 \caption{Case III, Circular inset with rough wall.}\label{fig: circular inset}
 \end{subfigure}\\
 \begin{subfigure}{\textwidth}
 \includegraphics[width=0.95\textwidth]{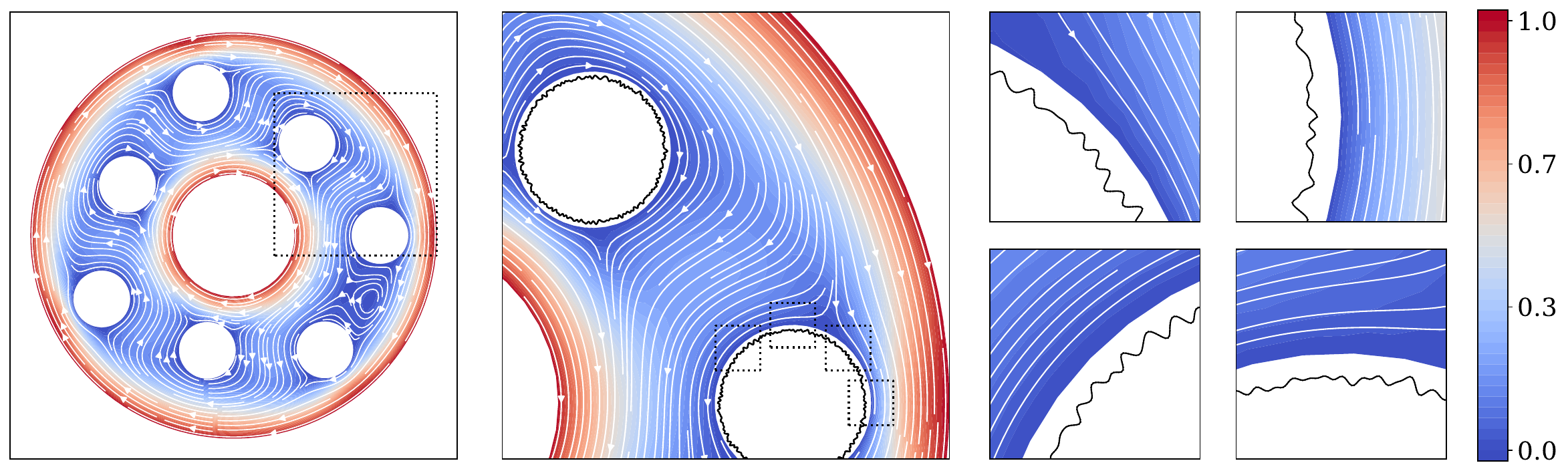}
 \caption{Case III, Streamlines and Velocity magnitude of \gls{FNO-HMM} solution.}\label{fig: circular inset deep}
 \end{subfigure}\\
 \begin{subfigure}{\textwidth}
 \includegraphics[width=0.95\textwidth]{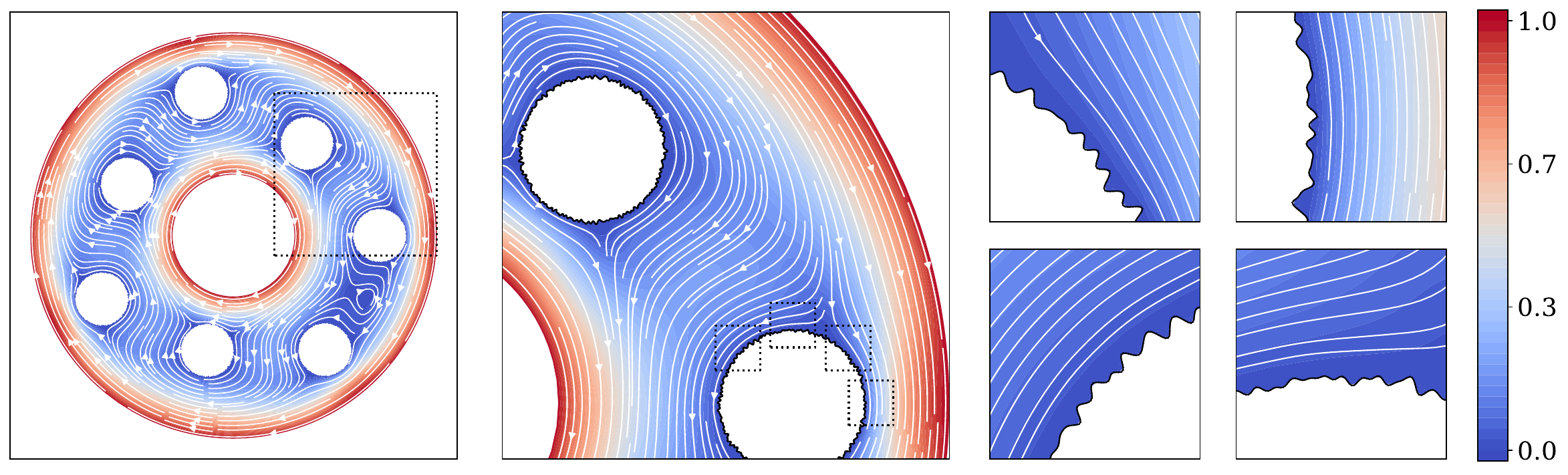}
 \caption{Case III, Streamlines and velocity magnitude of full solution.}\label{fig: circular inset full}
 \end{subfigure}\\
 \begin{subfigure}{\textwidth}
 \includegraphics[width=0.95\textwidth]{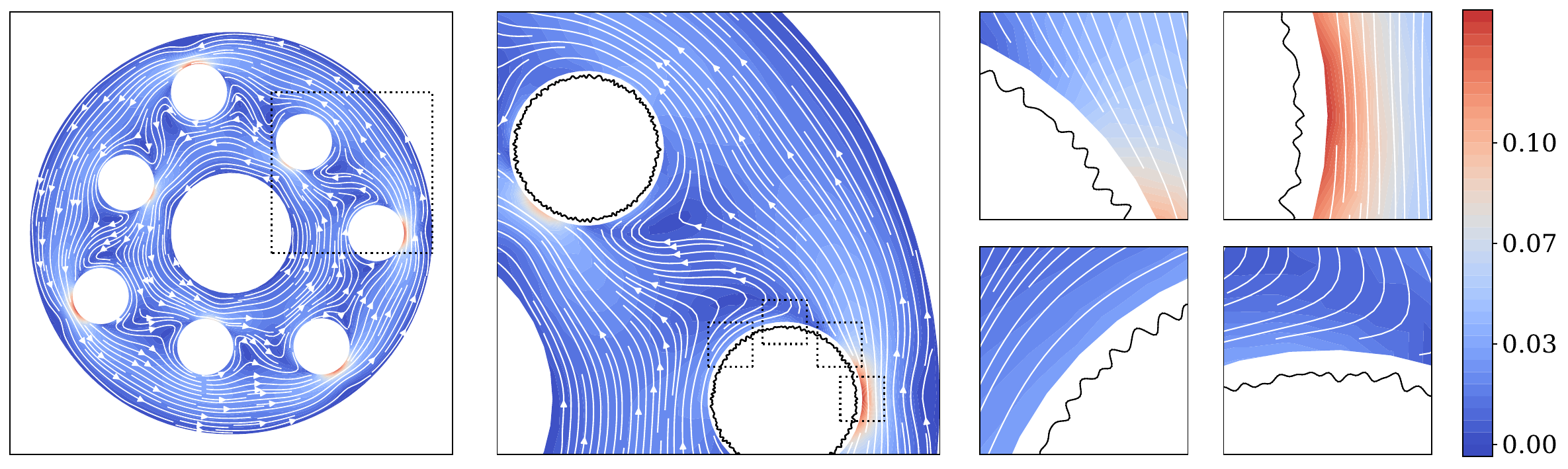}
 \caption{Case III, Streamlines and Velocity magnitude of \gls{FNO-HMM} error.}\label{fig: circular inset error}
 \end{subfigure}
 \caption{Case III, Plots of \gls{FNO-HMM} solution, true solution and total error.}
\end{figure}

Consider an annulus with radii $R>r$ with 7 rough circular insets $\cdom_{\epsilon,i}$:
\[
 \dom_\epsilon :=  \bigl\{\point\in\Rn{2}\mid r<\|\point\|<R \bigr\}\setminus(\cup_{i=1}^7\cdom_{\epsilon,i}).
\]
The circular insets are perfectly circular domains of radius $r_i$ centered at points $\point_i$, that have been perturbed by a Gaussian random field $\gpfunc\colon\Rn{2}\to\Reals$ with variance $1$ and exponential kernel with decay independent of $\epsilon$:
\[
 \cdom_{\epsilon, i} := \Bigl\{\point\in\Rn{2}\mid \|\point-\point_i\| <  r_i - \epsilon\bigl(\gpfunc(\point/\epsilon)-\gpfunc_{\mathrm{min}}\bigr)\Bigr\}.  
\]  
The Gaussian process has been shifted by $\gpfunc_{\mathrm{min}}:=\min_{\|\ypoint\|<R/\epsilon}(\gpfunc(\ypoint))$ to ensure that the smooth domain $\dom_0$ defined in \cref{sec: main problem} is precisely the domain with \emph{unperturbed} circular insets of radius $r_i$. The boundary condition is posed on the smooth boundaries $\|\point\|=R$ and $\|\point\|=r$ of the annulus and takes the form $\bdryvel(\point) := \bigl(\pointi_2/\|\point\|, -\pointi_1/\|\point\|\bigr)$, that is the clockwise tangental vector of size $1$. 

We solve the macro problem on the smooth domain $\dom_0$ with FEM using the \texttt{FEniCSx} package (a collection of the packages \texttt{DOLFINx}~\cite{baratta2023dolfinx}, Basix~\cite{BasixJoss2022basix} and UFL~\cite{AlnaesEtal2014ufl}). The computational mesh is generated by \texttt{gmsh} and contains 6998 elements. We use Taylor-Hood elements, and employ \emph{Nitsche's penalty method}~\cite{benzaken2022nitsche} for the Robin boundary condition. The full resolution ground truth is computed with FEM on a much finer mesh ($134\;648$ elements) of the multi scale domain $\dom_\epsilon$. The extrapolation operator is the same as for the periodic domain. Lastly, we use a kernel based interpolator (Gaussian process regressor) to interpolate the slip amount on the boundary. The interpolator is formulated in 2D assuming that the slip amount is a \emph{two-dimensional} random Gaussian field defined on the entire domain $\dom_0$. This means that the slip amount at one circular inset is allowed to influence the values at other insets.
    
\cref{fig: circular inset slice} compares function values and errors close to the rough wall. Similarly to the periodic pipe in Case II, \gls{FNO-HMM} outperforms the naive solution, and performs on par with a high accuracy \gls{BIE-HMM}. The numerical error in the training data is at most $10^{-5}$ and \cref{fig: micro solver generalizes} clearly shows that the learned model uncertainty is considerably larger than that. The flow plots in \cref{fig: circular inset}--d show the flow field for the full domain. The learned \gls{HMM} solver captures the flow field with a 10\% error with the exception of parts in the domain where the flow is confined to a narrow region, such as when the circular insets are close to the outer wall. The direction of the error vectors correlate strongly in space, as evident by the smooth flow structure of the error in~\cref{fig: circular inset error}. 

\subsubsection{Case IV: Hilbert Tube}

Lastly, we consider a costly PDE that is computationally infeasible to resolve fully on our system. We consider the fifth iteration of the Hilbert space-filling curve, spanning the domain $[0,1/2]\times[-1/4, 1/4]$. We mirror the path to create a simply connected curve. We fit an 800 mode Fourier series (symmetric around the origin) $\gamma\in\contspace^2([0,2\pi],\Rn{2})$ with corresponding normal vector $\normal\in\contspace^2([0,2\pi],\Rn{2})$ to the Hilbert curve. The macro domain $\dom_0$ is then defined by a tubular region surrounding $\gamma$ of thickness $H=0.01$, defined by its inner wall $\dombdry_{0,-}$ and outer wall $\dombdry_{0,+}$:
\[
    \dombdry_{0,\pm} := \left\{\gamma(t) \pm H\normal(t)\colon t\in [0,2\pi]\right\}.
\]
Finally, given a resulting parameterisation $\gamma_-$ and outward facing normal vector $\normal_-$ of the inner wall $\micdombdry_{0,-}$ we define $\roughness_\epsilon$ as 
\[
    \roughness_\epsilon := \left\{\gamma_-(t) + \epsilon \varphi(\gamma_-(t)/\epsilon)\normal_-(t)\colon t\in [0,2\pi]\right\},
\]
where the roughness field $\varphi\colon\Rn{2}\to\Rn{2}$ is defined by $\varphi(x_1,x_2)=\sin(x_1)\sin(x_2)$. We use a FEM macro solver with 16800 cells and Nitche's method for the Dirichlet boundary condition with a parameter $\lambda =10^4$. We use 1024 uniformly spaced micro problems, and the same interpolation as in Case III. \Cref{fig: tube problem} shows the domain, solution and difference between \gls{FNO-HMM} and \gls{BIE-HMM} with  $J=64$ discretization points. \Cref{tab: compute times} shows that the \gls{FNO-HMM} precomputation (i.e. approximation of the Riesz representors) is about 5 times faster than for \gls{BIE-HMM}, 

\begin{figure}[!ht]
 \centering
 \includegraphics[width=1.0\textwidth]{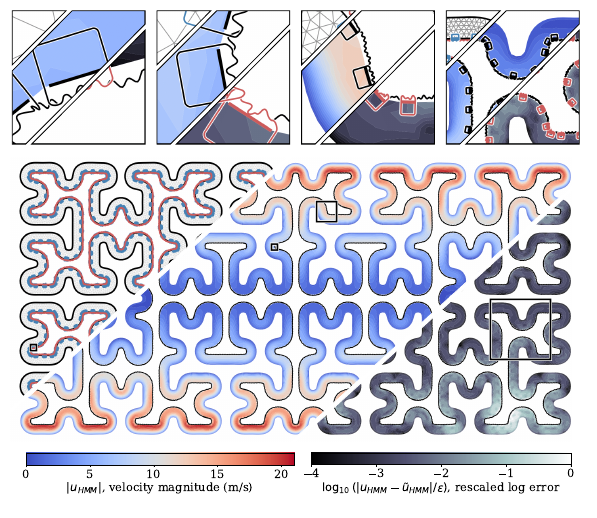}
 \caption{Case IV, Tube domain with rough wall. Solution predicted by \gls{FNO-HMM}. We show three slices of the domain: The left slice shows the mesh, the middle slice shows the predicted vector streamlines superimposed on a heatmap of the magnitude and the right plot shows the deviation from \gls{BIE-HMM}, rescaled by $\epsilon$.}\label{fig: tube problem}
\end{figure}

\begin{table}[!ht]
    \centering
    \begin{tabular}{c|c c c c}
         Routine & \begin{tabular}{c}\gls{FNO}-based \\ Precompute\end{tabular} & \begin{tabular}{c}\gls{BIE} (J=64)\\ Precompute \end{tabular} & Macro Solve & Overhead\\
         \hline
         Time [s] &  9.2 & 50.5 & 15.1 & 21.1\\
    \end{tabular}
    \caption{Compute times on Case IV, split into \gls{FNO-HMM} and \gls{BIE-HMM} precomputation with $J=64$ discretization points, macro solve and everything else (Overhead).}
    \label{tab: compute times}
\end{table}

\section{Discussion and Outlook}\label{sec: conclusion}
We propose a learning-based precomputation method for microscopic simulations in a \gls{HMM}, assuming a finite-dimensional, linear microscopic model. Applying this to laminar viscous flow over a rough wall, we develop an \gls{FNO}-based neural architecture that leverages the boundary integral formulation of Stokes flow. This micro solver integrates into the \gls{HMM}, and generalizes to arbitrary macroscopic domain and boundary values, provided the roughness distribution aligns with the training data. We show, under regularity assumptions, that a bound on training error leads to a bound on macroscopic error. Our approach achieves the same accuracy as numerical solvers in a third of the computation time on two test cases. Moreover, the error contribution from the learned component of \gls{HMM} is an order of magnitude lower than the total error.

Our work was motivated by two main observations. First, the model error inherent in \gls{HMM} allows for a coarse resolution of micro problems, but the microscopic solver remains the computational bottleneck. Secondly, deep learning surrogates are computationally faster than numerical solvers but have limited accuracy. Our numerical experiments largely support this analysis. Surprisingly, \gls{FNO-HMM} outperforms \gls{BIE-HMM} with discretization adjusted to match the \gls{FNO-HMM} in two categories: inference time and \emph{error variance} on the training data distribution, making it more predictable. We note that, although not done in our implementation, \gls{BIE-HMM} can trade variance in the error for variance in compute times via adaptive discretization.

A crucial design choice is learning the Riesz representors for the two linear functionals involved in computing slip amounts, rather than directly solving micro problems. Methods that do not exploit this structure will always be slower than numerical solvers that compute the representors, since it reduces each \gls{HMM} iteration to computing inner products. A major limitation of our approach is the reliance on linear, homogeneous microscale problems. In some non-linear flows with low Reynolds numbers such an approximation is accurate (for example,\cite{carney2021heterogeneous} study steady-state Navier Stokes). If the roughness scale is significantly larger than the width of the boundary layer however, another approach is needed. 

In realistic settings we expect the roughness distribution to be known, either by design, or by surface scans of the domain. The FNO then likely needs to be trained, or at the very least fine-tuned, on the new roughness distribution. This is a significant limitation, as the training data generation is computationally expensive. One promising direction is adaptive sampling, in which the training data is generated adaptively during training. Additionally, exchanging the \gls{FNO} for an interpretable model such as Gaussian-process regression could enable more accurate error estimates of the slip amount, and potentially allow for a scaling law on the amount of training data required at a given roughness scale.

\bibliographystyle{plain}
\bibliography{phd_bib}

\clearpage
\section{Appendix}
\subsection{Alignment Estimates}\label{sec: appendix: alignment estimates}
In the following section we derive bounds for the alignment between the micro solution and the tangent vector along the evaluation line.
Results are stated and proved in the special case of a micro domain with constant, horizontal velocity at the upper boundary and periodicity in the $x$-direction. This serves as a motivating example for \cref{cor: slip gen error}. The proof idea is similar to~\cite{carney2021heterogeneous}:
\begin{itemize}
  \item Show that the average flow satisfies a boundary value problem~\eqref{eq:1dBDPAverage} in one variable.
  \item Write its solution as a weighted average between the shearing flow at the top wall, and a counter-effect from the non-slip condition at the bottom wall \eqref{eq: banan}.
  \item Show that the friction from the boundary is overpowered by the shearing flow, causing a net flow that is bounded from below for sufficiently small $\epsilon$~\eqref{eq: estimate order averages}. 
\end{itemize} 
We begin by defining the (micro) domain, which is of the form
\[ \micdom := \bigl\{ (x, y)\colon x\in[0,\gamma_3], y\in [\varphi(x)-\gamma_1, \gamma_2] \bigr\},
\]
where $\varphi(x)\in(-\epsilon, 0)$ and $\gamma_1, \gamma_2 > 0$ denote the distance from the evaluation line $\lineseg$ to the upper and lower boundary, respectively, and $\gamma_3$ is the width of the domain. We assume
\begin{equation} 
\lim_{\epsilon\to 0}\gamma_1/\gamma_2 = 0,
    \quad \lim_{\epsilon\to 0}\epsilon/\gamma_2 = 0 \quad\text{and}\quad
    \lim_{\epsilon\to 0}\epsilon/\gamma_1=0,\label{eq: limits for gammas}
\end{equation}
so that asymptotically, $\epsilon \ll \gamma_1 \ll \gamma_2$. 
Next, let $\micu \colon \micdom \to \Reals$ be a function that is $\gamma_3$-periodic in the $x$-variable and that solves 
\begin{alignat}{2}
 \Delta\micu(\pmb x) - \nabla\micp(\pmb x) &= 0 & & \label{eq: micro special case}\\
 \nabla\cdot\micu(\pmb x) &= 0 &\quad &\text{for $\pmb x\in\micdom$,}\\
 \micu(x,\gamma_2) &= Ue_x& \quad &\text{for $y = \gamma_2$,} \\
 \micu\bigl(x,\varphi(x)-\gamma_1\bigr) &= 0& \quad &\text{for $x\in (0,\gamma_3)$.}\label{eq: micro special case 2}
\end{alignat}
Here, $e_x \in \Rn{2}$ is the unit vector in the $x$-direction and $U>0$ is constant. We now aim to estimate the following two averages of $\micu$:
\[
 \langle\micu_1\rangle(y) := \frac{1}{\gamma_3}\int_{0}^{\gamma_3} \micu(x, y)\cdot e_x\mathrm{d}x
 \quad\text{and}\quad 
 \langle \partial_y\micu_1\rangle(\ypoint) := \partial_y\langle\micu_1\rangle(y).
\]
Specifically, we want to evaluate the above at $y=0$.
\begin{proposition}\label{prop: aligned bounds}
 Assume $\micu$ solves~\eqref{eq: micro special case}--\eqref{eq: micro special case 2} and $\epsilon,\gamma_1,\gamma_2,\gamma_3$ satisfies \eqref{eq: limits for gammas}. Then, for $\epsilon>0$ sufficiently small, $\langle \micu_1\rangle (0)$ and $\langle\partial_y\micu_1\rangle(0)$ are $\eta_1$- and $\eta_2$- aligned, respectively:
 \[
     \frac{\bigl|\langle \micu_1\rangle(0)\bigr|}{U} \geq \eta_1
     \quad\text{and}\quad 
     \frac{\bigl|\langle \partial_y\micu_1\rangle(0)\bigr|}{U}\geq \eta_2
     \quad\text{with $\eta_1=\mathcal{O}(\gamma_1/\gamma_2)$ and $\eta_2=\mathcal{O}(1/\gamma_2)$.}
 \]
\end{proposition}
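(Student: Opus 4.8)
Here is how I would prove \cref{prop: aligned bounds}.

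The plan is to pass from the two-dimensional Stokes problem \eqref{eq: micro special case}--\eqref{eq: micro special case 2} to a one-dimensional boundary value problem for the $x$-averaged horizontal velocity, to solve that problem explicitly in terms of the shear imposed at the top wall and the no-slip condition at the bottom, and then to argue that the $\epsilon$-scale roughness only perturbs this explicit solution at order $\epsilon$, which is negligible under the scaling $\epsilon\ll\gamma_1\ll\gamma_2$ of \eqref{eq: limits for gammas}. First I would average the first component of the momentum equation \eqref{eq: micro special case} over $x\in[0,\gamma_3]$: by $\gamma_3$-periodicity the terms $\partial_{xx}\micu_1$ and $\partial_x\micp$ drop out, leaving $\partial_{yy}\langle\micu_1\rangle(y)=0$ at every height whose full horizontal slice lies in $\micdom$, i.e. for $y>y_\ast:=\max_x\varphi(x)-\gamma_1$. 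Hence $\langle\micu_1\rangle(y)=Ay+B$ on $(y_\ast,\gamma_2]$ for $\epsilon$-dependent constants $A,B$, and since $0$ lies in this interval we have $\langle\micu_1\rangle(0)=B$ and $\langle\partial_y\micu_1\rangle(0)=A$. It therefore suffices to show that, for $\epsilon$ small, $B\geq \tfrac12 U\gamma_1/\gamma_2$ and $A\geq \tfrac12 U/\gamma_2$; the top boundary condition already supplies the relation $A\gamma_2+B=U$.

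To pin down the second relation I would compare $\micu$ with the reference Couette flow $\micu_{\mathrm{ref}}(x,y):=\tfrac{U}{\gamma_1+\gamma_2}\,(y+\gamma_1,\,0)$, which solves the Stokes equations exactly, meets the top condition, vanishes on the flat line $y=-\gamma_1$, and on the rough wall equals $\tfrac{U}{\gamma_1+\gamma_2}(\varphi(x),0)$, of size $\mathcal{O}(U\epsilon/\gamma_2)$. The difference $w:=\micu-\micu_{\mathrm{ref}}$ then solves a Stokes problem with data $0$ at $y=\gamma_2$ and $-\micu_{\mathrm{ref}}$ on the rough wall; this data has zero net flux because $\int_0^{\gamma_3}\varphi\varphi'=0$ by periodicity, so the problem is well posed. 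The key estimate is the energy bound $\|\nabla w\|_{L^2(\micdom)}\lesssim \tfrac{U}{\gamma_2}\sqrt{\gamma_3\epsilon}$, which follows (via the standard Stokes energy inequality $\|\nabla w\|_{L^2}\le\|\nabla G\|_{L^2}$ over divergence-free extensions $G$ of the boundary data) from constructing $G$ supported in an $\mathcal{O}(\epsilon)$-strip around the rough wall.

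With this in hand the rest is elementary. For each $x$ I would integrate $\partial_y w_1$ from the wall point $y=\varphi(x)-\gamma_1$ (where $w_1=-\micu_{\mathrm{ref},1}$) up to $y_\ast$, a vertical distance at most $\epsilon$, then average over $x$ and apply Cauchy--Schwarz to obtain $\bigl|\langle w_1\rangle(y_\ast)\bigr|\leq \|\micu_{\mathrm{ref}}\|_{L^\infty}+\sqrt{\epsilon/\gamma_3}\,\|\nabla w\|_{L^2(\micdom)}\lesssim U\epsilon/\gamma_2$. Since $\langle w_1\rangle$ is affine on $(y_\ast,\gamma_2]$, writing $\langle w_1\rangle(y)=(A-A_0)y+(B-A_0\gamma_1)$ with $A_0:=U/(\gamma_1+\gamma_2)$, the identity $\langle w_1\rangle(\gamma_2)=0$ together with the bound on $|\langle w_1\rangle(y_\ast)|$ and $y_\ast=-\gamma_1+\mathcal{O}(\epsilon)$ forces $|A-A_0|\lesssim U\epsilon/\gamma_2^2$ and $|B-A_0\gamma_1|\lesssim U\epsilon/\gamma_2$. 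Because $A_0=\tfrac{U}{\gamma_2}(1+\mathcal{O}(\gamma_1/\gamma_2))$ and $A_0\gamma_1=\tfrac{U\gamma_1}{\gamma_2}(1+\mathcal{O}(\gamma_1/\gamma_2))$, the three limits in \eqref{eq: limits for gammas} ($\epsilon/\gamma_1\to0$ kills the $B$-correction, $\epsilon/\gamma_2\to0$ the $A$-correction, $\gamma_1/\gamma_2\to0$ normalizes the leading terms) give $A=\tfrac{U}{\gamma_2}(1+o(1))$ and $B=\tfrac{U\gamma_1}{\gamma_2}(1+o(1))$ as $\epsilon\to0$, hence the claim with $\eta_1=\tfrac{\gamma_1}{2\gamma_2}=\mathcal{O}(\gamma_1/\gamma_2)$ and $\eta_2=\tfrac{1}{2\gamma_2}=\mathcal{O}(1/\gamma_2)$.

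The hard part is the energy bound $\|\nabla w\|_{L^2(\micdom)}\lesssim \tfrac{U}{\gamma_2}\sqrt{\gamma_3\epsilon}$: since the rough wall can have slopes of order one, the divergence-free extension of $-\micu_{\mathrm{ref}}$ must be built with care and confined to an $\mathcal{O}(\epsilon)$-neighbourhood of that wall so that its Dirichlet energy stays at the level $U^2\gamma_3\epsilon/\gamma_2^2$ rather than $U^2\gamma_3/\gamma_2$; alternatively one may invoke the standard fact from the homogenization literature (cf.~\cite{carney2021heterogeneous,basson2006wall}) that Stokes flow over roughness of amplitude $\epsilon$ differs in energy from the associated smooth-wall flow by $\mathcal{O}(\epsilon)$. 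Everything else — the $x$-averaging, the fundamental theorem of calculus, and the $2\times2$ linear system — is routine.
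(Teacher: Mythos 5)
Your proposal is correct, and its first half coincides with the paper's argument: both proofs average the first momentum equation in $x$, use periodicity of $\micu_1$ and the pressure to get $\partial_y^2\langle\micu_1\rangle=0$ (cf.~\eqref{eq:1dBDPAverage}), and reduce everything to identifying the affine profile's two constants, with the top boundary condition supplying one relation. Where you genuinely diverge is in how the remaining near-wall constant is controlled. The paper sets $C(\epsilon):=\langle\micu_1\rangle(-\gamma_1)$, writes $\micu$ as the Couette flow plus a correction $\omega$ whose boundary data is $\mathcal{O}(\epsilon/\gamma_2)$, and invokes the maximum-norm continuity result of \cite[Thm~5 in Ch.\@ 3.5]{Ladyzhenskaia2014-ko} to get a \emph{pointwise} bound $C(\epsilon)=\mathcal{O}(\epsilon/\gamma_2)$, after which \eqref{eq: banan}--\eqref{eq: estimate order averages} finish the proof. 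You instead run an energy argument: minimum dissipation for Stokes gives $\|\nabla w\|_{L^2}\le\|\nabla G\|_{L^2}$ over divergence-free liftings $G$ of the $\mathcal{O}(U\epsilon/\gamma_2)$ wall data, a lifting confined to an $\mathcal{O}(\epsilon)$-strip yields $\|\nabla w\|_{L^2}\lesssim (U/\gamma_2)\sqrt{\gamma_3\epsilon}$, and the fundamental theorem of calculus plus Cauchy--Schwarz converts this into a bound on the \emph{averaged} trace $\langle w_1\rangle(y_*)$, which is all the 1D profile needs; your resulting asymptotics for $A$ and $B$ agree with \eqref{eq: estimate order averages} and the limits \eqref{eq: limits for gammas} close the argument exactly as in the paper. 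The trade-off: the paper's route is shorter but leans on a cited sup-norm estimate whose constant must be uniform over the rough geometries, while yours is elementary and self-contained except for the one step you yourself flag — the construction of the divergence-free, $\epsilon$-strip-supported lifting with energy $\mathcal{O}(U^2\gamma_3\epsilon/\gamma_2^2)$, which does hold for roughness of amplitude and wavelength $\epsilon$ (order-one slopes, e.g.\ via a cut-off stream function, using that the wall data has zero net flux since $\int_0^{\gamma_3}\varphi\varphi'\,\mathrm{d}x=0$), but would need to be written out to make the proof complete. A minor point in your favour: restricting the averaged ODE to $y>y_*=\max_x\varphi(x)-\gamma_1$ is slightly more careful than the paper's use of the slice $y=-\gamma_1$.
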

\begin{proof}
  Combining \eqref{eq: micro special case} with the periodicity of $\micu_1$ and $\pi$ in the $x$-direction yields that the function $y \mapsto \langle \micu_1\rangle(y)$ satisfies
 \begin{equation}\label{eq:1dBDPAverage}
    \begin{split}
     \frac{\mathrm{d}^2\langle \micu_1\rangle}{\mathrm{d} y^2}(y) &= \frac{1}{\gamma_3}\int_0^{\gamma_3} \partial_y^2\micu_1(x,y)\mathrm{d}x = \frac{1}{\gamma_3}\int_0^{\gamma_3} -\partial_x^2\micu_1(x,y)+\partial_x\pi(x,y)\mathrm{d}x = 0
     \\[0.75em]
     \langle\micu_1\rangle(\gamma_2) &=U. 
     \end{split}
 \end{equation}
 Next, define $C(\epsilon):=\langle \micu_1\rangle(-\gamma_1)$ for some, yet unknown $C(\epsilon)$.
 Then the functions $y \mapsto \langle \micu_1\rangle(y)$ and $y \mapsto \langle\partial_y\micu\rangle(y)$ solve a 1-dimensional boundary value problem that can be solved by linear interpolation:
 \begin{equation}\label{eq: banan}
     \langle \micu_1\rangle(y) = U\frac{y+\gamma_1}{\gamma_2+\gamma_1} + C(\epsilon)\frac{\gamma_2-y}{\gamma_2+\gamma_1}
     \quad \text{and}\quad 
     \langle\partial_y\micu\rangle(y) = \bigl(U-C(\epsilon)\bigr)\frac{1}{\gamma_1+\gamma_2}.
 \end{equation}
 Next, we estimate $C(\epsilon)$. We make the ansatz 
 \[ \micu(x,y) = U\frac{y + \gamma_1}{\gamma_2+\gamma_1}e_x + U \omega(x,y), \] where $\omega$ is a correction that satisfies Stokes equations with the boundary conditions
 \[
     \omega(x,\gamma_2) = 0
     \quad\text{and}\quad
     \omega(x, \varphi(x)-\gamma_1) = \varphi(x)\gamma_2^{-1}e_x = \mathcal{O}(\epsilon/\gamma_2)e_x.
 \]
 By continuity of the solution \cite[Thm~5 in Ch.\@ 3.5]{Ladyzhenskaia2014-ko}, $\bigl\|\micu(x,-\gamma_1)\bigr\|=\mathcal{O}(\epsilon/\gamma_2)$ and hence, $C(\epsilon) =\mathcal{O}(\epsilon/\gamma_2)$. A Taylor expansion of \eqref{eq: banan} then shows
 \begin{equation}
     \langle \micu_1\rangle(0)=  U\frac{\gamma_1}{\gamma_2}\bigl(1 + \mathcal{O}(\epsilon/\gamma_1+\gamma_1/\gamma_2)\bigr)
     \quad\text{and}\quad 
     \langle \partial_y \micu_1\rangle(0)= U\frac{1}{\gamma_2}\bigl(1+\mathcal{O}(\epsilon/\gamma_2+\gamma_1/\gamma_2)\bigr).
     \label{eq: estimate order averages}
 \end{equation}
 The result now follows by choosing $\epsilon$ sufficiently small and invoking~\eqref{eq: limits for gammas}.
\end{proof}

\begin{corollary}\label{cor: slip estimate} By the same construction as \cref{prop: aligned bounds}, we also obtain the following asymptotic expression for the estimated slip coefficient:
 \[
     \frac{\langle \micu_1\rangle (0)}{\langle\partial_y \micu_1\rangle(0)} = \gamma_1\bigl(1+\mathcal{O}(\epsilon/\gamma_1+\epsilon/\gamma_2)\bigr)
     \quad\text{as $\epsilon \to 0$.}
 \]
\end{corollary}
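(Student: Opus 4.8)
The plan is to bypass the two separate asymptotic expansions recorded in~\eqref{eq: estimate order averages} and instead work with the \emph{exact} formulas~\eqref{eq: banan} before expanding anything. The reason this is the right move is that each of $\langle\micu_1\rangle(0)$ and $\langle\partial_y\micu_1\rangle(0)$ carries the common prefactor $(\gamma_1+\gamma_2)^{-1}$, and it is exactly the expansion $(\gamma_1+\gamma_2)^{-1}=\gamma_2^{-1}\bigl(1+\mathcal{O}(\gamma_1/\gamma_2)\bigr)$ of that prefactor which produces the spurious $\mathcal{O}(\gamma_1/\gamma_2)$ terms appearing in~\eqref{eq: estimate order averages}. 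Since the prefactor is common to numerator and denominator, it cancels in the quotient, and with it the $\mathcal{O}(\gamma_1/\gamma_2)$ contribution.

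Concretely, evaluating~\eqref{eq: banan} at $y=0$ gives
\[
  \langle\micu_1\rangle(0)=\frac{U\gamma_1+C(\epsilon)\gamma_2}{\gamma_1+\gamma_2},
  \qquad
  \langle\partial_y\micu_1\rangle(0)=\frac{U-C(\epsilon)}{\gamma_1+\gamma_2},
\]
so that, after the prefactor cancels,
\[
  \frac{\langle\micu_1\rangle(0)}{\langle\partial_y\micu_1\rangle(0)}
  =\frac{U\gamma_1+C(\epsilon)\gamma_2}{U-C(\epsilon)}.
\]
I would then insert the estimate $C(\epsilon)=\mathcal{O}(U\epsilon/\gamma_2)$ that is established inside the proof of \cref{prop: aligned bounds} (via the correction $\omega$ and the continuity estimate for the Stokes problem). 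This yields $C(\epsilon)\gamma_2=\mathcal{O}(U\epsilon)$ and $C(\epsilon)/U=\mathcal{O}(\epsilon/\gamma_2)$, hence
\[
  \frac{U\gamma_1+C(\epsilon)\gamma_2}{U-C(\epsilon)}
  =\gamma_1\,\frac{1+\mathcal{O}(\epsilon/\gamma_1)}{1+\mathcal{O}(\epsilon/\gamma_2)}
  =\gamma_1\bigl(1+\mathcal{O}(\epsilon/\gamma_1)\bigr)\bigl(1+\mathcal{O}(\epsilon/\gamma_2)\bigr),
\]
where the last equality uses $\epsilon/\gamma_2\to 0$ (from~\eqref{eq: limits for gammas}), so that $\bigl(1+\mathcal{O}(\epsilon/\gamma_2)\bigr)^{-1}=1+\mathcal{O}(\epsilon/\gamma_2)$. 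Expanding the product and absorbing the cross term $\mathcal{O}\bigl(\epsilon^2/(\gamma_1\gamma_2)\bigr)$ into $\mathcal{O}(\epsilon/\gamma_1)$ (legitimate since $\epsilon/\gamma_2\to0$) gives $\gamma_1\bigl(1+\mathcal{O}(\epsilon/\gamma_1+\epsilon/\gamma_2)\bigr)$, which is the claim.

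There is no substantial obstacle here: the only genuine analytic input is the bound $C(\epsilon)=\mathcal{O}(U\epsilon/\gamma_2)$, already available from \cref{prop: aligned bounds} and ultimately resting on continuous dependence of the Stokes solution on its boundary data. The one point that must be handled with care — and the reason the corollary is \emph{not} a one-line division of~\eqref{eq: estimate order averages} — is to form the exact quotient first so that the common factor $(\gamma_1+\gamma_2)^{-1}$ cancels and the $\mathcal{O}(\gamma_1/\gamma_2)$ error is eliminated rather than carried into the final estimate. Everything else is bookkeeping of $\mathcal{O}$-terms under the scaling hierarchy $\epsilon\ll\gamma_1\ll\gamma_2$ from~\eqref{eq: limits for gammas}.
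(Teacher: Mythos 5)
Your proposal is correct and is essentially the derivation the paper intends by ``the same construction'': evaluate the exact expressions~\eqref{eq: banan} at $y=0$, cancel the common factor $(\gamma_1+\gamma_2)^{-1}$, insert $C(\epsilon)=\mathcal{O}(U\epsilon/\gamma_2)$ from the proof of \cref{prop: aligned bounds}, and expand under~\eqref{eq: limits for gammas}. Your remark that one must form the exact quotient first—so the $\mathcal{O}(\gamma_1/\gamma_2)$ terms of~\eqref{eq: estimate order averages} never enter—is exactly the right point and explains why the corollary's error contains only $\epsilon/\gamma_1+\epsilon/\gamma_2$.
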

\begin{remark}
  Carney et al argue that $C(\epsilon)$ in \eqref{eq: banan} is $\mathcal{O}(U\epsilon)$\cite[appendix B]{carney2021heterogeneous}, but their proof relies on an upper bound of the line average $\langle\micu_1\rangle(0)$ by the $L^1$-norm in the whole domain, which to our knowledge is unmotivated. Our approach is based on using \cite[Thm~5 in Ch.\@ 3.5]{Ladyzhenskaia2014-ko} to control the maximal value of the solution.
\end{remark}

\subsection{Lipschitz Continuity of Slip Amount}\label{sec: lipschitz slip}
Here we give a heuristic motivation for why it is natural to assume that the slip coefficient is Lipschitz continuous. The computation is simplified significantly by solving an infinitely wide micro domain of the form
\[ 
\micdom := \bigl\{ (x,y)\colon x\in\Reals, y\in [\varphi(x)-\gamma_1, \gamma_2] \bigr\}.
\]
while we still only average over the interval $[0,\gamma_3]$. This avoids the need for more advanced theoretical machinery such as shape calculus. With the above construction and $\epsilon,\gamma_1,\gamma_2,\gamma_3$ as in \cref{sec: appendix: alignment estimates}, we write the slip amount $\alpha(z)$ on the following form:
\[
 \alpha(z) = \dfrac{\bigl\langle S_z[\micu_1]\bigr\rangle(0)}{\bigl\langle S_z[ \partial_y\micu_1]\bigr\rangle(0)}
 = \dfrac{\frac{1}{\gamma_3}\displaystyle{\int_0^{\gamma_3}}\micu_1(x+z,0)\,\mathrm{d}x}{\frac{1}{\gamma_3}\displaystyle{\int_0^{\gamma_3}}\partial_y\micu_1(x+z,0)\,\mathrm{d}x},
\]
where $S_z[u](x,y):=u(x+z,y)$ is the shift operator. 
We can now characterize the regularity of the slip amount:
\begin{proposition}
 For $\epsilon>0$ sufficiently small, the slip amount $z \mapsto \alpha(z)$ is  Lipschitz-continuous with a Lipschitz constant bounded by $2\gamma_1/\gamma_3$.
\end{proposition}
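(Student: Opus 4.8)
The plan is to write $\alpha(z)=A(z)/B(z)$ with
\[
A(z):=\bigl\langle S_z[\micu_1]\bigr\rangle(0),\qquad B(z):=\bigl\langle S_z[\partial_y\micu_1]\bigr\rangle(0),
\]
to split off the dominant shear part of $\micu$ analytically, and then to bound $\sup_z|\alpha'(z)|$ by $2\gamma_1/\gamma_3$. This suffices because a $C^1$ function with derivative bounded by $L$ is Lipschitz with constant $L$, and $A,B$ are smooth in $z$ since $\micu$ is interior-smooth and the line $\{y=0\}$ lies in the interior of $\micdom$, at distance at least $\gamma_1$ from $\micdombdry$ (recall $\varphi<0$ and, for $\epsilon$ small, $\gamma_1<\gamma_2$).

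First I would reuse the ansatz from \cref{sec: appendix: alignment estimates}: $\micu(x,y)=U\frac{y+\gamma_1}{\gamma_2+\gamma_1}e_x+U\omega(x,y)$, where the correction $\omega$ solves the homogeneous Stokes system in the strip with boundary data $0$ on $\{y=\gamma_2\}$ and $-\varphi(x)(\gamma_2+\gamma_1)^{-1}e_x=\mathcal O(\epsilon/\gamma_2)e_x$ on $\{y=\varphi(x)-\gamma_1\}$. By the same maximum-principle-type estimate for Stokes flow used in \cref{sec: appendix: alignment estimates} (continuity of the solution, \cite[Thm~5 in Ch.\@ 3.5]{Ladyzhenskaia2014-ko}), $\|\omega\|_{L^\infty(\micdom)}=\mathcal O(\epsilon/\gamma_2)$. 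Since every point of $\{y=0\}$ is the centre of a ball of radius $\gamma_1/2$ contained in $\micdom$, standard interior regularity for the Stokes system yields $\|\nabla\omega\|_{L^\infty(\{y=0\})}\le C\gamma_1^{-1}\|\omega\|_{L^\infty(\micdom)}=\mathcal O(\epsilon/(\gamma_1\gamma_2))$ with $C$ a universal constant.

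The key observation is that the shear part of $\micu$ is $x$-independent, so on $\{y=0\}$ and after averaging over a window of width $\gamma_3$ it contributes only a $z$-independent constant:
\[
A(z)=\frac{U\gamma_1}{\gamma_2+\gamma_1}+U\bigl\langle S_z[\omega_1]\bigr\rangle(0),\qquad B(z)=\frac{U}{\gamma_2+\gamma_1}+U\bigl\langle S_z[\partial_y\omega_1]\bigr\rangle(0).
\]
From the two bounds above, $|A(z)|\le 2U\gamma_1/\gamma_2$ and $|B(z)|\ge U/(2\gamma_2)$ once $\epsilon$ is small (using $\gamma_1/\gamma_2,\epsilon/\gamma_1\to0$). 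Differentiating the window average by the fundamental theorem of calculus cancels the constants, so
\[
A'(z)=\frac{U}{\gamma_3}\bigl(\omega_1(z+\gamma_3,0)-\omega_1(z,0)\bigr),\qquad B'(z)=\frac{U}{\gamma_3}\bigl(\partial_y\omega_1(z+\gamma_3,0)-\partial_y\omega_1(z,0)\bigr),
\]
whence $|A'(z)|=\mathcal O(U\epsilon/(\gamma_2\gamma_3))$ and $|B'(z)|=\mathcal O(U\epsilon/(\gamma_1\gamma_2\gamma_3))$. Substituting into $\alpha'=A'/B-AB'/B^2$ gives $|\alpha'(z)|\le|A'|/|B|+|A|\,|B'|/|B|^2=\mathcal O(\epsilon/\gamma_3)$ uniformly in $z$; by \eqref{eq: limits for gammas} this is below $2\gamma_1/\gamma_3$ for $\epsilon$ small enough, which is the claim.

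The main obstacle is the two Stokes estimates: the $L^\infty$ bound on $\omega$ in the unbounded strip and, more delicately, the interior gradient estimate with a constant uniform in $\epsilon$. The latter is unproblematic in principle, since it is applied only on $\gamma_1/2$-balls that never meet the rough boundary, but both rely on having a well-posed, interior-smooth solution of the strip problem, which — as throughout \cref{sec: appendix: alignment estimates} — I would treat at the same formal level rather than prove from scratch.
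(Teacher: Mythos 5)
Your proof is correct (modulo the two Stokes estimates you flag, which the paper itself also only invokes at a formal level), but it takes a genuinely different route from the paper's. The paper differentiates $\alpha=A/B$ in the form $\alpha'(z)=\alpha(z)\bigl(\langle \partial_x S_z[\micu_1]\rangle/\langle S_z[\micu_1]\rangle-\langle \partial_x S_z[\partial_y\micu_1]\rangle/\langle S_z[\partial_y\micu_1]\rangle\bigr)$, converts the averaged $x$-derivatives into endpoint differences over the window of width $\gamma_3$ (the same key structural step you use), and then bounds the resulting bracket directly by re-invoking the asymptotics of \cref{prop: aligned bounds} and \cref{cor: slip estimate}, landing on $|\alpha'|\leq\tfrac{\gamma_1}{\gamma_3}\bigl(1+\mathcal{O}(\epsilon/\gamma_1+\epsilon/\gamma_2+\gamma_1/\gamma_2)\bigr)\leq 2\gamma_1/\gamma_3$; it never needs pointwise control of $\partial_y\micu_1$ beyond what the averaged quantities already provide. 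You instead split off the shear profile explicitly, note that it is $x$-independent so it drops out of $A'$ and $B'$, and control the correction $\omega$ by the sup bound $\mathcal{O}(\epsilon/\gamma_2)$ together with an additional interior gradient estimate $\|\nabla\omega\|_{L^\infty(\{y=0\})}\lesssim\gamma_1^{-1}\|\omega\|_\infty$ on $\gamma_1/2$-balls that stay away from the rough wall. That extra ingredient is standard for the homogeneous Stokes system and is used legitimately here, and it buys you a strictly stronger conclusion: $|\alpha'(z)|=\mathcal{O}(\epsilon/\gamma_3)$, which is asymptotically much smaller than the paper's $\gamma_1/\gamma_3$-scale bound and comfortably below $2\gamma_1/\gamma_3$ by \eqref{eq: limits for gammas}. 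The trade-off is that your argument relies on one more regularity estimate (interior Stokes derivative bounds with a constant uniform in $\epsilon$, which is fine since the balls never meet $\micdombdry$), whereas the paper's argument is shorter and reuses only the estimates it has already established; both ultimately rest on the same formal sup-norm stability of the strip problem.
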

\begin{proof}
 Note first that any function $f$ defined by $f(x,z) = g(x+z)$ satisfies $\partial_x f = \partial_z f$. Differentiating $\alpha(z)$, we obtain
 \begin{multline*}
   \frac{\mathrm{d}\alpha(z)}{\mathrm{d}z} 
     =\frac{\mathrm{d}}{\mathrm{d}z} \biggl[\frac{\bigl\langle S_z[\micu_1]\bigr\rangle(0)}{\bigl\langle S_z[\partial_y\micu_1]\bigr\rangle(0)} \biggr] 
     =\alpha(z)\biggl(\frac{\bigl\langle \partial_x S_z[\micu_1]\bigr\rangle(0)}{\bigl\langle S_z[\micu_1]\bigr\rangle(0)}-\frac{\bigl\langle \partial_xS_z[\partial_y\micu_1]\bigr\rangle(0)}{\bigl\langle S_z[\partial_y\micu_1]\bigr\rangle(0)}\biggr)
     \\[0.75em]
     =\frac{\alpha(z)}{\gamma_3}\biggl(\frac{\micu_1(z+\gamma_3,0) - \micu_1(z, 0)}{\bigl\langle S_z[\micu_1]\bigr\rangle}-\frac{\partial_\ypoint\micu_1(z+\gamma_3,0)-\partial_y\micu_1(z,0)}{\bigl\langle S_z[\partial_\ypoint\micu_1]\bigr\rangle}\biggr).
 \end{multline*}
 Next, we evoke the estimates from \cref{cor: slip estimate} and \cref{prop: aligned bounds}:
 \begin{equation}
     \frac{\mathrm{d}\alpha(z)}{\mathrm{d}z} = \frac{\gamma_1}{\gamma_3}\bigl(1+\mathcal{O}(\epsilon/\gamma_1 + \epsilon/\gamma_2+\gamma_1/\gamma_2)\bigr)\leq 2\gamma_1/\gamma_3
     \quad\text{for $\epsilon>0$ small enough.}
 \end{equation}
 Hence,
 \[
     \bigl|\alpha(x)-\alpha(y)\bigr| = \left|\int_x^{y} \frac{\mathrm{d}\alpha(z)}{\mathrm{d}z}\mathrm{d}z\right|\leq 2\frac{\gamma_1}{\gamma_3}|x-y|,
 \]
 i.e., the Lipschitz constant is bounded by $2\gamma_1/\gamma_3$.
\end{proof}

\subsection{An Error Bound for the Slip Amount}\label{sec: appendix: slip estimates}
The final goal of this section is to prove~\cref{cor: slip gen error}. We begin with proving~\cref{lemma: expected ratio}.
\begin{proof}[Proof of \cref{lemma: expected ratio}]
 Define the random variables $\rv e_1 := \frac{\rv a_1-\rv b_1}{\rv a_1}$ and $\rv e_2 := \frac{\rv b_2-\rv a_2}{\rv b_2}$. 
 Next, we rewrite the error:
 \[
     \frac{\rv a_1}{\rv a_2}-\frac{\rv b_1}{\rv b_2}=\frac{\rv a_1 \rv b_2-\rv b_1 \rv a_2}{\rv a_2 \rv b_2} = \frac{\rv a_1 \rv b_2-\rv a_1 \rv b_2\left(1 - \rv e_1\right)\left(1-\rv e_2\right)}{\rv a_2 \rv b_2} = \frac{\rv a_1}{\rv a_2}(\rv e_1 \rv e_2-\rv e_1-\rv e_2).
 \]
 Taking the expectation of the absolute value and then using the triangle inequality and boundedness of $\rv a_1/\rv a_2$ now results in the following:
 \begin{multline*}
     \expectp{\Bigl|\frac{\rv a_1}{\rv a_2}-\frac{\rv b_1}{\rv b_2}\Bigr|} \leq \expectp{\Bigl|\frac{\rv a_1}{\rv a_2}\Bigr|\bigl(|\rv e_1|+|\rv e_2|+|\rv e_1 \rv e_2|\bigr)}\\\leq C\left(\sqrt{\expectp{\rv e_1^2}} + \sqrt{\expectp{\rv e_2^2}} + \sqrt{\expectp{\rv e_1^2}\expectp{\rv e_1^2}\,}\right).
 \end{multline*}
 The final estimate is obtained by using the given bounds $\expectp{\rv e_1^2}\leq \delta_1^2$ and $\expectp{\rv e_2^2}\leq \delta_2^2$.
\end{proof}

\begin{proof}[Proof of \cref{cor: slip gen error}]
Introduce the random variables 
\begin{xalignat*}{2}
  \rv a_1 &:=\inner{\rv \rone}{\micbdryvel}_{\rv\micdombdry}
  & \rv b_1 &:=\inner{\rv \trone}{\micbdryvel}_{\rv\micdombdry}
  \\
  \rv a_2 &:=\inner{\rv \rtwo}{\micbdryvel}_{\rv \micdombdry}
  & \rv b_2 &:=\inner{\rv \trtwo}{\micbdryvel}_{\rv\micdombdry}.
\end{xalignat*}
\cref{asm: slip operator taylor} allows us to apply \cref{lemma: expected ratio} on the above random variables.
This yields the desired result, so it remains to show that the expectations in \cref{lemma: expected ratio} are bounded. 
 \[
     \expectp{\left(\frac{\rv a_1-\rv b_1}{\rv a_1}\right)^2} = \expectp{\left(\frac{\inner{\rv\rone-\rv\trone}{\micbdryvel}_{\rv\micdombdry}}{\inner{\rv\rone}{\micbdryvel}_{\rv\micdombdry}}\right)^2} \leq \expectp{\frac{\|\rv\rone-\rv\trone\|^2_{\rv\micdombdry}\|\micbdryvel\|^2_{\rv\micdombdry}}{\inner{\rv\rone}{\micbdryvel}_{\rv\micdombdry}^2}},
 \]
 Where we applied Cauchy-Schwarz to the inner product in the denominator. The expectation of the numerator is bounded bounded by \cref{asm: well aligned}:
 \[
     \expectp{\frac{\|\rv\rone-\rv\trone\|^2_{\rv\micdombdry}\|\micbdryvel\|^2_{\rv\micdombdry}}{\inner{\rv\rone}{\micbdryvel}_{\rv\micdombdry}^2}}\leq \expectp{\frac{\|\rv\rone-\rv\trone\|^2_{\rv\micdombdry}}{\eta_1^2\|\rv\rone\|_{\rv\micdombdry}^2}} \leq \frac{\delta^2}{\eta_1^2}.
 \]
 The bound on $\expectp{(\rv a_2-\rv b_2)^2/\rv b_2^2}$ is obtained similarly.
    \end{proof}

\subsection{Diagonal Terms in the Layer Potential}~\label{sec: appendix: boundary limits}

We compute the limit $\lim_{\ypoint\in\micdombdry \to \ypoint_0}\stresslet_{klm}(\ypoint-\ypoint_0)\normal_l(\ypoint_0)$ where $\stresslet_{klm}$ is the stresslet in \eqref{eq:Stresslet}, 
$\ypoint_0 = \diffeo(t_0)$, $\normal(\ypoint_0)$ is the normal vector at the boundary point $\ypoint_0$ and $\diffeo$ is a smooth parameterisation of the boundary $\micdombdry$. Substitution yields:
\begin{equation}
   \lim_{t\to t_0}\stresslet_{klm}\bigl(\diffeo(t)-\diffeo(t_0)\bigr)\normal_l\bigl(\diffeo(t_0)\bigr)\label{eq: boundary limit}
\end{equation}
First, let $g(h):= \diffeo(t_0+h)-\diffeo(t_0)$ and note that 
$g'(h)=\diffeo'(t_0+h)$, $g(0)=0$, and $g'(0)=\diffeo(t_0)$.
Next, we have $w_l\normal_l\bigl(\diffeo(t_0+h)\bigr)=\mathrm{det}\bigl[w, g'(h)\bigr]/\bigl\|g'(h)\bigr\|$ for any vector $w\in\Rn{2}$. Let $a_h:=g_k(h)$, $b_h:=g_m(h)$, $c_h:=\mathrm{det}[g(h), g'(0)]/\|g'(0)\|$ and $d_h:=\|g(h)\|$. Then, the expression in~\eqref{eq: boundary limit} reduces to
\[
\stresslet_{klm}\bigl(\diffeo(t)-\diffeo(t_0)\bigr)\normal_l\bigl(\diffeo(t_0)\bigr) = \frac{a_h b_h c_h}{d_h^4},
\]
where $a_h, b_h, d_h$ are $\mathcal{O}(h)$ functions and $c_h$ is $\mathcal{O}(h^2)$. This allows us to simplify the computations considerably:

\begin{lemma}\label{lemma: boundary limit}
    Suppose $a_h, b_h, d_h$ are $\mathcal{O}(h)$ functions and $c_h$ is a $\mathcal{O}(h^2)$. Then,
    \[
 \lim_{h\to 0}\frac{a_h b_h c_h}{d_h^4} = \frac{a_0'b_0'c_0''}{2(d_0')^4},
    \]
    where $a_0' := \lim_{h\to 0}\frac{\mathrm{d}a_h}{\mathrm{d}h}$, $b_0' := \lim_{h\to 0}\frac{\mathrm{d}b_h}{\mathrm{d}h}$, $c_0'' := \lim_{h\to 0}\frac{\mathrm{d}^2 c_h}{\mathrm{d}h^2}$ and $d_0'' := \lim_{h\to 0}\frac{\mathrm{d}^2 d_h}{\mathrm{d}h^2}$.
\end{lemma}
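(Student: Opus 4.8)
The plan is to prove this by Taylor-expanding numerator and denominator to their leading order in $h$. Since $a_h,b_h,d_h$ are $\mathcal{O}(h)$, they vanish at $h=0$, so by the definition of the stated derivative limits we may write $a_h = a_0'h + o(h)$, $b_h = b_0'h + o(h)$ and $d_h = d_0'h + o(h)$ as $h\to 0$. Since $c_h$ is $\mathcal{O}(h^2)$, both $c_h$ and its first derivative vanish at $0$, so a second-order expansion gives $c_h = \tfrac12 c_0'' h^2 + o(h^2)$. Multiplying these expansions,
\[
 a_h b_h c_h = \tfrac12\, a_0' b_0' c_0''\, h^4 + o(h^4),
 \qquad
 d_h^4 = (d_0')^4\, h^4 + o(h^4).
\]

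Dividing and letting $h\to 0$: assuming $d_0'\neq 0$, the common factor $h^4$ cancels and the $o(h^4)$ remainders become negligible against the leading term of the denominator, so $\frac{a_h b_h c_h}{d_h^4}\to \frac{a_0' b_0' c_0''}{2(d_0')^4}$, which is the claim. An equivalent route is four successive applications of L'H\^opital's rule: the numerator $a_h b_h c_h$ and the denominator $d_h^4$, together with their first three derivatives, all vanish at $h=0$ because of the orders of vanishing of the individual factors (every lower-order derivative still carries a factor of $a_0=b_0=c_0=c_0'=d_0=0$), while the fourth derivatives evaluate to $12\,a_0'b_0'c_0''$ and $24\,(d_0')^4$ respectively, giving the same constant $\tfrac12$.

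There is essentially no obstacle here beyond one bookkeeping subtlety worth stating explicitly: the hypothesis $d_0'\neq 0$ is needed (and is tacit in the lemma), and in the application $d_h=\|g(h)\|$ with $g(0)=0$ and $g'(0)=\diffeo'(t_0)\neq 0$ is only Lipschitz, not differentiable, at $h=0$. The clean fix is to read $(d_0')^4$ as the leading coefficient of $d_h^4=\|g(h)\|^4$, which \emph{is} differentiable to the order needed, since $\|g(h)\|^2=\|g'(0)\|^2 h^2 + o(h^2)$ forces $\|g(h)\|^4=\|g'(0)\|^4 h^4 + o(h^4)$; with this reading $(d_0')^4=\|g'(0)\|^4\neq 0$ precisely because the boundary parameterisation $\diffeo$ is regular. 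Apart from carrying this observation into the statement, the lemma is a routine computation.
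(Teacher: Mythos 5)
Your proof is correct, and your primary route differs from the paper's in a worthwhile way. The paper jumps in one step to the ratio of fourth derivatives via L'H\^opital and then counts, combinatorially, which terms of $\frac{\mathrm{d}^4}{\mathrm{d}h^4}(a_hb_hc_h)$ and $\frac{\mathrm{d}^4}{\mathrm{d}h^4}d_h^4$ survive at $h=0$ (twelve copies of $a_h'b_h'c_h''$ against $4!=24$ copies of $(d_h')^4$, whence the factor $\tfrac12$). You instead expand each factor separately, $a_h=a_0'h+o(h)$, $b_h=b_0'h+o(h)$, $c_h=\tfrac12 c_0''h^2+o(h^2)$, $d_h^4=(d_0')^4h^4+o(h^4)$, and multiply; this avoids the multinomial bookkeeping altogether and makes explicit the vanishing of all lower-order derivatives that the paper's single L'H\^opital step tacitly requires (your secondary remark about four successive applications, with fourth derivatives $12\,a_0'b_0'c_0''$ and $24\,(d_0')^4$, is exactly the paper's argument). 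Your two side observations are also genuine refinements rather than pedantry: the hypothesis $d_0'\neq 0$ is indeed used but never stated, and in the application $d_h=\|g(h)\|$ has only one-sided derivatives at $h=0$ (the paper's claim $\tfrac{\mathrm{d}}{\mathrm{d}h}\|g(h)\|\to\|g'(0)\|$ holds only as $h\to 0^+$), which is harmless precisely for the reason you give: only $d_h^4=\|g(h)\|^4$ enters, its leading coefficient is $\|g'(0)\|^4\neq 0$ by regularity of $\diffeo$, and the sign ambiguity is killed by the even power. In short, your Taylor-expansion argument buys elementarity and explicitness; the paper's counting argument is shorter once one trusts the one-shot L'H\^opital step.
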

\begin{proof}
By L'Hopital's rule:
\[
    \lim_{h\to 0}\frac{a_h b_h c_h}{d_h^4}=\lim_{h\to 0}\frac{\tfrac{\mathrm{d^4}}{\mathrm{d}h^4}(a_h b_h c_h)}{\tfrac{\mathrm{d^4}}{\mathrm{d}h^4}d_h^4}\Big|_{h=0}.
\]
The only surviving term in the numerator is $a'_h b'_h c''_h$ and in the denominator only $(d_h')^4$ survives. It remains to determine how many such terms there are. The former is the number of ways to distribute 4 elements in 3 bins of sizes 1,1 and 2. The answer is $4\cdot 3=12$. The latter is $4!=24$. Hence, the limit is given by:
\[
    \lim_{h\to 0}\frac{a_h b_h c_h}{d_h^4} = \frac{1}{2}\frac{a_0'b_0'c_0''}{(d_0')^4}.
\]
\end{proof}

\cref{lemma: boundary limit} combined with $\tfrac{\mathrm{d}}{\mathrm{d} h}\bigl\|g(h)\bigr\| = g'(h)\cdot g(h)/\|g(h)\|\overset{h\to 0}{\longrightarrow} \bigl\|g'(0)\bigr\|,$ results in
\[
\lim_{h\to 0}\frac{g_k(h)g_m(h)\mathrm{det}\bigl[g(h), g'(0)\bigr]/\|g'(0)\|}{\bigl\|g(t)\bigr\|^4} =\frac{g_k'(0)g_m'(0)}{\bigl\|g'(0)\bigr\|^2}\cdot\frac{\mathrm{det}[g''(0), g'(0)]}{2\bigl\|g'(0)\bigr\|^3}.
\]
With the notation $\tangent$ for the unit tangent vector along the boundary at the point $\ypoint_0$, and identifying the right hand factor as the curvature $\kappa$, we obtain the limit:
\[
\lim_{\ypoint\in\micdombdry \to \ypoint_0}\sum_{l=1}^2\stresslet_{klm}(\ypoint-\ypoint_0)\normal_l(\ypoint_0) = \frac{\kappa}{2}\tangent_k\tangent_m.
\]

\subsection{Formula for the Intermediate Representors}\label{sec: appendix: intermediate reps}
Here we prove~\cref{prop: intermediate representors}. The computation is considerably easier in complex coordinates. Let $a=z_{a}, b=z_{b}$, $\xi = z_{\point}, \tangent=iz_{\normal_\ell}$ be the complex representations of $a,b,\point$ and the tangent vector $\tangent$ on the line, and $e_\xi = z_{\normal(\point)}$ the tangent vector along the boundary. Evaluation of the potential on a complex-valued density $\omega$ at a point $z$ is then given by:
\[
   \dlp[\omega](z) = \frac{1}{i\pi}\int_\Gamma \omega(\xi)\ImOp\left[\frac{\mathrm{d}\xi}{\xi - z}\right] - \frac{1}{i\pi}\int_\Gamma\overline{\omega(\xi)}\frac{\ImOp[(\overline{\xi} - \overline{z})\,\mathrm{d}\xi]}{(\overline{\xi}-\overline{z})^2}.
\]
Now, suppose we wish to compute an integral of $\ReOp\{\dlp[\omega](z)\overline{t}\}$, the inner product of $\dlp[\omega](z)$ and $t$, across a line segment between two points $a$ and $b$. Switching order of integration:
\begin{align*}
    \int_a^b \ReOp&\bigl[\dlp[\omega](z)\overline{t}\bigr]\,|\mathrm{d}z| \\
    &= \int_a^b\ReOp\left[\overline{t}\left(\frac{1}{i\pi}\int_\Gamma \omega(\xi)\ImOp\left[\frac{\mathrm{d}\xi}{\xi - z}\right] - \frac{1}{i\pi}\int_\Gamma\overline{\omega(\xi)}\frac{\ImOp\bigl[(\overline{\xi} - \overline{z})\,\mathrm{d}\xi\bigr]}{(\overline{\xi}-\overline{z})^2}\right)\right]|\mathrm{d}z|
    \\    
    &= \int_\Gamma\ReOp\left[-\frac{1}{i \pi}\int_a^b \left[\ImOp\left[\frac{\mathrm{d}\xi}{\xi - z}\right]t|\mathrm{d}z| + \frac{\ImOp\bigl[(\overline{\xi} - \overline{z})e_\xi \bigr]}{(\overline{\xi}-\overline{z})^2}\,\overline{t}|\mathrm{d}z| \right]\overline{\omega(\xi)}\right]|\mathrm{d}\xi|.
\end{align*}
We identify the Riesz representor as the integral expression before the density $\overline{\omega(\xi)}$. Denote it by $v_1(\xi)$. Using $t|\mathrm{d}z| = \mathrm{d}z$, we then have
\begin{align*}
v_1(\xi)&=-\frac{1}{i\pi}\int_a^b\left(\ImOp\left[\frac{e_\xi}{\xi - z}\right]\,\mathrm{d}z+\frac{\ImOp\bigl[(\overline{\xi} - \overline{z})e_\xi\bigr]}{(\overline{\xi}-\overline{z})^2}\,\overline{\mathrm{d}z}\right).
\end{align*}
So far, we have not used that $\lineseg$ is a straight line. To find an explicit expression of the above, we need to do that. We simplify with a change of variables $\zeta=\xi-z$, $\mathrm{d}\zeta=-\mathrm{d}z$. Using $\ImOp[z] = (z-\overline{z})/2i$, the integrand can be split into terms:
\newcommand{\ddelta}{\mathrm{d}\zeta}
\[
    \frac{1}{i\pi}\left(\ImOp\left[\frac{e_\xi}{\zeta}\right]\ddelta+\frac{\ImOp[\overline{\zeta}e_\xi]}{\overline{\zeta}^2}\overline{\ddelta}\right) =\frac{1}{-2\pi}\left(\frac{e_\xi\ddelta}{\zeta} - \frac{\overline{e_\xi}\ddelta}{\overline{\zeta}} + \frac{e_\xi\overline{\ddelta}}{\overline{\zeta}} - \frac{\zeta\overline{e_\xi}\overline{\ddelta}}{\overline{\zeta}^2}\right) 
\]
By the chain rule, $\mathrm{d}(\zeta/\overline{\zeta}) = -\overline{\ddelta}(\zeta/\overline{\zeta}^2) + \ddelta/\overline{\zeta}$. Similarly, $\mathrm{d}\log(\zeta)=\ddelta/\zeta$. Furthermore, $\overline{t}\ddelta = t\overline{\ddelta}$ on the line. Therefore, the above left hand side can be expressed as 
\begin{multline*}
    \frac{1}{-2\pi}\left(\frac{e_\xi\ddelta}{\zeta} - 2\frac{\overline{e_\xi}\ddelta}{\overline{\zeta}}+ \frac{e_\xi\overline{\ddelta}}{\overline{\zeta}} +\overline{e_\xi}\mathrm{d}\left(\frac{\zeta}{\overline{\zeta}}\right)\right)
    \\
    = \frac{1}{-2\pi}\left(e_\xi\mathrm{d}\log\zeta + (e_\xi-2\overline{e_\xi}\tfrac{t}{\overline{t}})\overline{\mathrm{d}\log\zeta} +\overline{e_\xi}\mathrm{d}\left(\frac{\zeta}{\overline{\zeta}}\right)\right)
    \\
    = \frac{1}{-2\pi}\left(t\left(\tfrac{e_\xi}{t}\mathrm{d}\log\zeta + (\tfrac{e_\xi}{t}-2\tfrac{\overline{e_\xi}}{\overline{t}})\overline{\mathrm{d}\log\zeta}\right) +\overline{e_\xi}\mathrm{d}\left(\frac{\zeta}{\overline{\zeta}}\right)\right).
\end{multline*}
Inserting the above back into the integral and using $\ddelta/\zeta = \mathrm{d}(\log\zeta)$, together with $\Delta a=\xi-a$ and $\Delta b = \xi-b$, we obtain
\begin{align*}
    v_1(\xi) &=\frac{1}{-2\pi}\int_{\Delta a}^{\Delta b} \left(t\left(\tfrac{e_\xi}{t}\mathrm{d}\log\zeta + (\tfrac{e_\xi}{t}-2\tfrac{\overline{e_\xi}}{\overline{t}})\overline{\mathrm{d}\log\zeta}\right) +\overline{e_\xi}\mathrm{d}\left(\frac{\zeta}{\overline{\zeta}}\right)\right) \\
    &= -\frac{1}{\pi}\left[\tfrac{t}{2}\left(\tfrac{e_\xi}{t}\log\zeta + (\tfrac{e_\xi}{t}-2\tfrac{\overline{e_\xi}}{\overline{t}})\overline{\log\zeta}\right) +\tfrac{\overline{e_\xi}}{2}\left(\frac{\zeta}{\overline{\zeta}}\right)\right]_{\Delta a}^{\Delta b}\\
    &=-\frac{1}{\pi}\left(\tfrac{it}{2}\left(\tfrac{n_\xi}{t}\log\tfrac{\Delta b}{\Delta a} + (\tfrac{n_\xi}{t}+2\tfrac{\overline{n_\xi}}{\overline{t}})\overline{\log\tfrac{\Delta b}{\Delta a}}\right)+\overline{n_\xi}\frac{\ImOp\left[\Delta b\overline{\Delta a}\right]}{\overline{\Delta b}\overline{\Delta a}}\right),
\end{align*}
which is the desired result.

The second kind intermediate representor can be written in a similar fashion, as
\[
    v_2(\xi) = -\frac{1}{i\pi}\int_a^b\left(\ImOp\left[\frac{t  e_\xi}{(\xi - z)^2}\right]\mathrm{d}z - \frac{\ImOp[\overline{t}e_\xi]}{(\overline{\xi} - \overline{z})^2}\overline{\mathrm{d}z} + 2\overline{t}\frac{\ImOp\bigl[\overline{t}e_\xi(\overline{\xi}-\overline{z})\bigr]}{(\overline{\xi}-\overline{z})^3}\,\overline{\mathrm{d}z}\right).    
\]
An analogous technique yields the desired expression.

\subsection{An Upper Bound for The Macro Error}\label{sec: appendix: pde bounds}
The aim is to prove~\cref{prop: pde error bound}. The approach in the proof follows a classical energy estimate using elementary properties of Hilbert space norms. The idea is as follows. Let $w = u-v$ and let $q$ be the corresponding difference in pressure. Then, we will show that the $H^1$-seminorm of $w$ satisfies an inequality involving also the norm of $w$ on the boundary $\dombdry$. A simple general inequality result will then finish the proof. We therefore begin by stating this helpful inequality.
\begin{claim}\label{lemma: completion of squares bound}
    Let $a,b,c,d >0$ such that
    $a^2 + b^2 \leq 2ca + 2db$.
    Then, 
    $a \leq \sqrt{c^2 + d^2} + c$.
\end{claim}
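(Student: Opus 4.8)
The plan is to complete the square in both variables simultaneously. Starting from the hypothesis $a^2 + b^2 \leq 2ca + 2db$, I would move everything to one side and add $c^2 + d^2$ to both sides, rewriting it as
\[
    (a-c)^2 + (b-d)^2 \leq c^2 + d^2 .
\]
This says exactly that the point $(a,b)$ lies in the closed disk of radius $\sqrt{c^2+d^2}$ centered at $(c,d)$.

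From here the conclusion is immediate: since $(b-d)^2 \geq 0$, the displayed inequality forces $(a-c)^2 \leq c^2 + d^2$, hence $|a-c| \leq \sqrt{c^2+d^2}$, and in particular $a - c \leq \sqrt{c^2+d^2}$, i.e. $a \leq \sqrt{c^2+d^2} + c$, as claimed. (The positivity hypotheses on $a,b,c,d$ are not even needed for this direction; they are presumably imposed because this is the regime in which the claim is applied, where $a,b$ are seminorms/boundary norms and $c,d$ are the corresponding data bounds.)

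There is no real obstacle here — the only thing to be careful about is the direction of the bound: completing the square gives a two-sided estimate $|a-c|\le\sqrt{c^2+d^2}$, and one must note that we only want the upper bound $a\le c+\sqrt{c^2+d^2}$, discarding the (also valid but unneeded) lower bound $a \ge c - \sqrt{c^2+d^2}$. This elementary inequality is then used downstream with $a = |u-v|_{H^1}$, $b$ a boundary term, and $c,d$ proportional to $\alpha_0^{-1}\sup_{x}|\alpha(x)-\beta(x)|$ to close the energy estimate in \cref{prop: pde error bound}.
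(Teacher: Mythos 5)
Your proposal is correct and is essentially identical to the paper's argument: both complete the square to obtain $(a-c)^2+(b-d)^2\le c^2+d^2$, discard the nonnegative term $(b-d)^2$, and bound $a\le |a-c|+c\le\sqrt{c^2+d^2}+c$. Nothing further is needed.
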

\begin{proof}
    By completion of squares, $(a-c)^2 + (b-d)^2 \leq c^2 + d^2$. Inserted into the triangle inequality $a \leq |a-c| + c$, we obtain $a \leq \sqrt{c^2 + d^2} + c$.
\end{proof}

We are now ready for the main proof. 
\begin{proof}[Proof of~\cref{prop: pde error bound}]
We begin by estimating the norm of $w$.
\[
    |w|_{H^1(\dom_0,\Rn{2})}^2 + \|(w\cdot \tangent)\alpha^{-1/2}\|_{L^2(\roughness_0,\Reals)} = \int_{\dom_0}\nabla \cdot (w : \nabla w) - w \cdot \Delta w  + \int_{\roughness_0} \frac{(w\cdot \tangent)^2}{\alpha}.
\]
Here we use the notation $a:b = \sum_{i,j}a_{ij}b_{ij}$ for matrices $a,b$. 
Next, by Gauss' theorem, the above simplifies to
\[
    \int_{\dombdry_0}\frac{1}{\alpha}(\alpha w \cdot\partial_\normal w + (w\cdot \tangent)^2) - (w\cdot \normal)q = \int_{\roughness_0} \frac{\alpha-\beta}{\alpha}(w\cdot \tangent)(\partial_\normal v\cdot \tangent), 
\]
where we used the boundary conditions of $u,v$ to conclude that $u=v$ on $\micdombdry_0\setminus\roughness_0$, and to rewrite $\alpha w\cdot \partial_\nu w + (w\cdot \tangent)^2 = (\beta-\alpha)(\partial_\normal v\cdot \tangent)(w\cdot \tangent)$ on $\roughness_0$. By Cauchy-Schwartz, we obtain
\begin{equation}
    |w|_{H^1(\dom_0,\Rn{2})}^2 + \|\tfrac{w\cdot \tangent}{\sqrt{\alpha}}\|_{L^2(\roughness_0,\Reals)}^2 \leq \|\tfrac{w\cdot \tangent}{\sqrt{\alpha}}\|_{L^2(\roughness_0,\Reals)}\|\tfrac{\partial_\normal v\cdot \tangent}{\sqrt{\alpha}}\|_{L^2(\roughness_0,\Reals)}\sup_{x\in\roughness_0}|\beta(x)-\alpha(x)|.\label{eq: perturbation}
\end{equation}
Applying \cref{lemma: completion of squares bound} with
\begin{xalignat*}{2}
a &:= |w|_{H^1(\dom_0,\Rn{2})}
& 2c &:=0
\\
b &:=\|\tfrac{w\cdot \tangent}{\sqrt{\alpha}}\|_{L^2(\roughness_0,\Reals)}
& 2d &:= \|\tfrac{\partial_\normal v\cdot \tangent}{\sqrt{\alpha}}\|_{L^2(\roughness_0,\Reals)}\sup_{x\in\roughness_0}|\beta(x)-\alpha(x)|
\end{xalignat*}
yields the estimate
\begin{align}
    |w|_{H^1(\dom_0,\Rn{2})} &\leq \|\tfrac{\partial_\normal v\cdot \tangent}{\sqrt{\alpha}}\|_{L^2(\roughness_0,\Reals)}\sup_{x\in\roughness_0}|\beta(x)-\alpha(x)|\nonumber\\
    &\leq \alpha_0^{-1/2}\|\partial_\normal v\cdot \tangent\|_{L^2(\roughness_0,\Reals)}\sup_{x\in\roughness_0}|\beta(x)-\alpha(x)|.\label{eq: h1 error bound}
\end{align}
It remains to estimate $\partial_\normal v\cdot \tangent$. We start by estimating $\|v\cdot \tangent\|_{L^2(\roughness_0,\Reals)}$, and then apply the boundary condition on $\roughness_0$ to write $\partial_\normal v\cdot \tangent = -v\cdot\tangent/\sqrt{\beta}$. We decompose $v$ as $G + z$, where $G\in C^2(\dom_0,\Rn{2})$ is twice differentiable, has zero divergence and satisfies the Dirichlet boundary condition on $\dombdry_0\setminus\roughness_0$ and is zero at $\roughness_0$. It follows that $\|\Delta G\|_{L^2(\dom_0,\Rn{2})}\leq C$ and $\|\partial_\normal G\cdot \tangent\|_{L^2(\roughness_0,\Reals)}\leq C$ for some $C>0$ independent of $\alpha,\beta$. Hence, $z\in H^1(\dom_0,\Rn{2})$ has zero divergence, is zero on $\dombdry_0\setminus \roughness_0$ and satsifies the slip condition with slip amount $\beta$ at the wall $\roughness_0$. But then, $v\cdot t=z\cdot t$ on $\roughness_0$, and 
\begin{align*}
    |z|_{H^1(\dom_0,\Rn{2})}^2 &
    + \|(v\cdot \tangent)/\sqrt{\beta}\|_{L^2(\roughness_0,\Reals)}^2 = \int_{\dom_0}\nabla z :\nabla z +\int_{\roughness_0}\frac{(z\cdot t)^2}{\beta}\\
    &= \int_{\dombdry_0}\partial_\normal z\cdot z - \int_{\dom_0}z\cdot \Delta z+\int_{\roughness_0}\frac{(z\cdot t)^2}{\beta}\\
    & =-\int_{\dom_0}z\cdot (\nabla p - \Delta G) + \int_{\roughness_0}\frac{1}{\beta}(\beta\partial_\normal z\cdot \tangent + z\cdot \tangent)\\
    &= \int_{\dombdry_0}\sqrt{\beta}\partial_n G\cdot \tfrac{z}{\sqrt{\beta}} - \int_{\dom_0}\nabla G : \nabla z,
\end{align*}
where we leverage integration by parts to rewrite the integrals to the boundary. The final step is to use Cauchy-Schwartz:
\begin{multline*}
    |z|_{H^1(\dom_0,\Rn{2})}^2 + \|\tfrac{(v\cdot \tangent)}{\sqrt{\beta}}\|_{L^2(\roughness_0,\Reals)}^2 \\
    \leq  \|\sqrt{\beta}\partial_n G\|_{L^2(\roughness_0,\Reals)}\|\tfrac{(v\cdot \tangent)}{\sqrt{\beta}}\|_{L^2(\roughness_0,\Reals)} + \|\nabla G\|_{L^2(\dom_0,\Rn{2})}|z|_{H^1(\dom_0,\Rn{2})}.
\end{multline*}
Now, applying \cref{lemma: completion of squares bound} with 
\begin{xalignat*}{2}
a &:= |z|_{H^1(\dom_0,\Rn{2})}
& 2c &:= \|\nabla G\|_{L^2(\dom_0,\Rn{2})}
\\
b &:=\|\tfrac{(v\cdot \tangent)}{\sqrt{\beta}}\|_{L^2(\roughness_0,\Reals)}
& 2d &:= \|\sqrt{\beta}\partial_n G\|_{L^2(\roughness_0,\Reals)}
\end{xalignat*}
gives 
\begin{multline*}
    \|\partial_\normal (v \cdot t)\|_{L^2(\roughness_0,\Reals)} =\dfrac{1}{\sqrt{\alpha_0}}\Bigl\|\dfrac{(v \cdot t)}{\sqrt{\beta}}\Bigr\|_{L^2(\roughness_0,\Reals)}
    \\[0.5em]
    \leq \dfrac{1}{2\sqrt{\alpha_0}}\left(\sqrt{\|\nabla G\|_{L^2(\dom_0,\Rn{2})}^2 + \|\sqrt{\beta}\partial_n G\|_{L^2(\roughness_0,\Reals)}^2} + \|\sqrt{\beta}\partial_n G\|_{L^2(\roughness_0,\Reals)}\right)
    \\[0.5em]
    \leq \dfrac{1}{2\sqrt{\alpha_0}}\left(\sqrt{1+\sup_{x\in\roughness_0}\beta(x)}+\sup_{x\in\roughness_0}\sqrt{\beta(x)}\right)C \leq 2\sqrt{\alpha_0}^{-1}C.
\end{multline*}
Inserting the above estimate for $\|\partial_\normal (v \cdot t)\|_{L^2(\roughness_0,\Reals)}$ into~\eqref{eq: h1 error bound} concludes the proof.
\end{proof}

\end{document}